    \newtheorem{observation}{Observation}
\begin{document}
    \newcommand{\fixE}[1]{#1}
    \newcommand{\fixK}[1]{#1}

    \renewcommand{\u}{{\bf u}}
    \newcommand{\A}{{\mathcal{A}}}
    \renewcommand{\L}{{\mathcal{L}}}

    \newcommand{\N}{{\mathbb N}}
    \newcommand{\R}{{\mathbb R}}
    \newcommand{\C}{{\mathbb C}}
    \newcommand{\Z}{{\mathbb Z}}
    \newcommand{\Zp}{{\mathbb Z}^{+}}
    \newcommand{\Zpz}{{\mathbb Z}_0^{+}}
    \newcommand{\parikh}{\Psi}
    \def\e{\mathrm e}
    \newcommand{\I}{\mathrm i}
    \def\d{\,\mathrm{d}}


    \newtheorem{definition}{Definition}
    \newtheorem{theorem}{Theorem}[section]
    \newtheorem{lemma}[theorem]{Lemma}
    \newtheorem{proposition}[theorem]{Proposition}
    \newtheorem{corollary}[theorem]{Corollary}
    \newtheorem{remark}{Remark}

\title{Balances of $m$-bonacci words}


\author[1]{Karel B\v{r}inda\thanks{karel.brinda{@}fjfi.cvut.cz}}
\author[1]{Edita Pelantov\'{a}\thanks{edita.pelantova{@}fjfi.cvut.cz}}
\author[2]{Ond\v{r}ej Turek\thanks{ondrej.turek{@}kochi-tech.ac.jp}}
\affil[1]{FNSPE Czech Technical University in Prague\\
Trojanova 13, 120 00 Praha 2, Czech Republic}
\affil[2]{Laboratory of Physics\\
Kochi University of Technology\\
Tosa Yamada, Kochi 782-8502, Japan
}

\maketitle

\begin{abstract}
    The $m$-bonacci word is a generalization of the Fibonacci word to
    the $m$-letter alphabet $\A = \{0,\ldots,m-1\}$. It is the unique
    fixed point of the Pisot--type  substitution $ \varphi_m:
    0\to 01,\ 1\to 02,\ \ldots,\ (m-2)\to0(m-1),\text{ and }(m-1)\to0$.
    A result of Adamczewski implies the existence of constants $c^{(m)}$  such that  the $m$-bonacci word is $c^{(m)}$-balanced, i.e.,
    numbers of letter $a$ occurring in two  factors  of the same length
     differ at most by $c^{(m)}$ for any letter $a\in \A$.
    The constants $c^{(m)}$ have been already determined for $m=2$ and
    $m=3$. {In this paper} we study the bounds $c^{(m)}$ for a general
    $m\geq2$. We show that the $m$-bonacci word is  $( \lfloor
    \kappa m   \rfloor +12)$-balanced, where $\kappa  \approx 0.58$.
    For $m\leq 12$, we improve the constant  $c^{(m)}$ by a computer
    numerical calculation   to the value
    $\lceil\frac{m+1}{2}\rceil$.
\end{abstract}



\section{Introduction}

The $m$-bonacci word is a generalization of the Fibonacci word to
 the $m$-letter alphabet $\A = \{0,\ldots,m-1\}$. It is the unique fixed point of the substitution $\varphi = \varphi_m$ given by the prescription
\begin{equation}\label{eq_mbon_subst}
    0\to01,\ 1\to 02,\ \ldots,\ (m-2)\to0(m-1),\text{ and }(m-1)\to0.
\end{equation}
In particular, for $m=3$, we obtain the substitution $0\to01,\ 1\to02,\ 2\to0$ with the fixed point \\
\[
0   1   0   2   0   1   0   0   1   0   2   0   1   0   1   0   2   0   1   0   0   1   0   2   0   1   0   2   0   1   0   0   1   0   2   0   1   0   1   0   2   0   1   0   0   1   0   2   0   1   0   0   1   0   2   0   1   0   1   0   2   0   1   0   0   1   0   2   0   1   0   2   0   1   0   0 \cdots\,,
\]
usually called the Tribonacci word.

The aim of this article is to study a certain combinatorial property of the $m$-bonacci word for a general $m$. Namely, we examine the balance property, which describes
a certain uniformity of occurrences of letters in an infinite word. In order to give its rigorous definition, let us precise the notation we will use in the sequel. A factor of an infinite word $\u = \u_0\u_1\u_2\cdots\in\A^\N$ is any finite string in the form $w=\u_i\u_{i+1}\cdots\u_{i+n-1}$ for certain $i\in\N_0$, $n\in\N$, where $|w|=n$ is the length of the factor $w$. The language of an infinite word $\u$, denoted by $\L(\u)$, is the set of all its factors. The number of occurencies of a given letter $a\in\A$ in a factor $w$ is denoted by $|w|_a$. Clearly, $\sum_{a\in\A}|w|_a = |w|$. The balance property is related to the variability of $|w|_a$ within the meaning of the following definition.

\begin{definition}
    Let $c$ be a positive integer. An infinite word $\u \in \A^\N$ is said to be $c$-balanced if
     \[
        |w|_a - |v|_a \leq c
     \]
     for all factors $w,v \in \L(\u)$ of the same length and for each letter $a\in\A$.
\end{definition}
The notion of a $1$-balanced word (originally referred to as ``balanced word'')
 has been used by  \fixE{Morse and Hedlund } already  in~1940 \cite{Morse2} for a characterization of Sturmian sequences.  Since the Fibonacci word (in our notation $2$-bonacci word)  is Sturmian,  it is $1$-balanced.

It was expected and announced in several papers since 2000 that
the Tribonacci word is $2$-balanced~\cite{CFZ, Berstel, Vuillon}.
This statement has been proved in 2009 (in two different ways) by
Richomme, Saari and Zamoboni \cite{Zamb}. As for a general
$m\geq2$, in 2009 Glen and Justin \cite{GlenJustin}
\fixE{mentioned } ``the $k$-bonacci word
is $(k-1)$-balanced'',
but to the best of our knowledge, no proof of this proposition has ever been published.

The $m$-bonacci words belong to a
broad class called Arnoux--Rauzy words. In the last ten years, balance properties of Arnoux--Rauzy words have been intensively studied. For the most recent
results and a nice overview see \cite{BCS}.

The works of Adamczewski on discrepancy and balance properties of fixed points of primitive substitutions \cite{Adam,Adam2} imply the existence of finite constants $c^{(m)}$ such that the $m$-bonacci word is $c^{(m)}$-balanced. Namely, Adamczewski proved that if all eigenvalues of the matrix of substitution  except the dominant one are of modulus less than 1, then the fixed point of the primitive substitution is $c$-balanced for some $c$.  It is well known (and explicitly shown in our text as well)  that  the substitution defined  by \eqref{eq_mbon_subst}  satisfies the Adamczewski condition.

In the present article, we approach the problem of determining $c^{(m)}$ by refining the matrix  method used by Adamczewski in~\cite{Adam,Adam2} (and also by Richomme, Saari, Zamboni in \cite{Zamb} in their Proof 2). 
Small values of $m$ can be treated numerically. We show that
\begin{itemize}
    \item the $4$-bonacci word and the $5$-bonacci word are $3$-balanced but not $2$-balanced;
    \item for $m=6,7,\ldots,12$ the $m$-bonacci word is $\lceil\frac{m+1}{2}\rceil$-balanced, Theorem \ref{CtyriPet}.
\end{itemize}
The approach works \fixK{for} a general $m$ as well. We prove the following theorem.

\textbf{Theorem.} (Theorem~\ref{hlavni}.) \emph{The $m$-bonacci word is $c^{(m)}$-balanced with
\[
c^{(m)} = \lfloor   \kappa m   \rfloor +12,
\]
where $\kappa = \frac{2}{\pi}\int_{0}^{2\pi}\frac{1-\cos x}{(5- 4 \cos x)\ln (5-4\cos x)} {\rm d}x   \approx 0.58$.}

Our results confirm the bound $c=m-1$ proposed by Glen and Justin
for all $\fixE{m\leq 12}$ and $ m\geq 29
$. Moreover, it turns out that the formerly proposed bound $c=m-1$
is far from being optimal except for a few small values of $m$.

\fixE{ Our article is organized as
follows: Section~\ref{Section 2} explains relationship between balance and
discrepancy and gives  a formula estimating  the balance constant
using spectrum of the matrix $M$ of substitution
\eqref{eq_mbon_subst}. In Section~\ref{Numerics},  we present results obtained
by computer evaluation of this formula. In Section~\ref{relations},  we show
that for estimating the balance constant $c$ we can concentrate on
the letter $0$ only. Sections~\ref{sumace} and \ref{Section 6} are devoted to the proof of
the main theorem.  Our proof requires very detailed information
about spectrum of the matrix $M$; in Appendix we use standard
methods of calculus to describe this spectrum.}


\section{Balance property and discrepancy}\label{Section 2}

This section describes the main idea that will be later applied to find  \fixE{for any letter $a\in\{0,\ldots,m-1\}$
upper 
bound on the letter balance constant
\[
    c_a := \max \{ |w|_a - |v|_a \,: \, v,w \in \L(\u) \hbox{ and }  |w|=|v|
\}\,.
\]
} The derivation of these bounds uses the following two
ingredients.
\begin{itemize}
    \item the $m$-bonacci sequence defined recursively
        \[
            T_0 = T_1 = \ldots = T_{m-2} = 0, \qquad T_{m-1}=1
        \]
        and
        \begin{equation}\label{eq_def_mbon_posl}
            T_n = T_{n-1} + T_{n-2} + \ldots + T_{n-m}
        \end{equation}
        for any $n\geq m$;
    \item
        zeros $\beta \equiv \beta_0>1, \beta_1, \ldots, \beta_{m-1}$ of the polynomial
        \[
            p(x) = x^m - x^{m-1} - \ldots - x - 1.
        \]
\end{itemize}

It is well known that $p(x)$ is an irreducible polynomial, its root $\beta$ belongs to the interval $(1,2)$, and the other roots (conjugates of $\beta$) are all of modulus less than $1$.
From now on\fixK{,} we order the roots $\beta_1,\ldots,\beta_{m-1}$ according to their arguments, i.e.,
\begin{equation}\label{numbering}
\fixE{0\leq}
\arg(\beta_1)\leq\arg(\beta_2)\leq\cdots\leq\arg(\beta_{m-1})\fixE{<
2\pi}\,.
\end{equation}

The $m$-bonacci word is a fixed point of a primitive substitution. Therefore, density $\mu_a$ of any letter $a\in\A$ is well defined and positive, i.e.,
\[
    \mu_a = \lim_{n\to+\infty}\frac{|\u[n]|_a}{n} > 0,
\]
where $\u[n]$ the prefix of $\u$ of length~$n$. We refer to \cite{Queffelec}, where the problem of letter densities is studied in detail.

The value $\mu_a$ can be interpreted in the way that the ``expected'' number of letters $a$ in the prefix $\u[n]$ is $\mu_a n$.
A simple consequence of the definition of $\mu_a$ is the following observation.
\begin{observation}\label{obs_hust}
    For any $\varepsilon>0$ and for any positive integer $N$, there exist factors $v$ and $w$ in $\L(\u)$ such that
    \[
        |v|=|w|=N, \qquad |w|_a \geq \mu_a N - \varepsilon  \qquad \text{and} \qquad  |v|_a \leq \mu_a N + \varepsilon.
    \]
\end{observation}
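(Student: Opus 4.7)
My strategy is to prove existence of the two factors via an averaging (Ces\`aro) argument over the shifts of $\u$, relying only on the definition of $\mu_a$ from the preceding paragraph.

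Fix $N\in\N$ and, for each $n\in\N_0$, set $w_n := \u_n\u_{n+1}\cdots\u_{n+N-1}$. Then $w_n\in\L(\u)$, $|w_n|=N$, and $|w_n|_a = |\u[n+N]|_a - |\u[n]|_a$. Introducing the deviation $f(n) := |\u[n]|_a - \mu_a n$, we have $f(n)/n\to 0$ by the definition of $\mu_a$, and
\[
|w_n|_a = \mu_a N + f(n+N) - f(n).
\]

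The key step would be to show that the Ces\`aro mean $\frac{1}{K}\sum_{n=0}^{K-1}|w_n|_a$ tends to $\mu_a N$ as $K\to\infty$. A telescoping rearrangement gives
\[
\sum_{n=0}^{K-1}\bigl(f(n+N)-f(n)\bigr) = \sum_{m=K}^{K+N-1}f(m) - \sum_{n=0}^{N-1}f(n),
\]
and dividing by $K$, both terms on the right-hand side vanish as $K\to\infty$: the second is a fixed finite sum divided by $K$, while the first is bounded in absolute value by $\frac{N}{K}\max_{K\leq m\leq K+N-1}|f(m)|$, which tends to $0$ because $N$ is fixed and $f(m)=o(m)$.

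Finally, the conclusion is a one-line pigeonhole argument: if $|w_n|_a < \mu_a N - \varepsilon$ held for every $n$, the Ces\`aro mean above would be at most $\mu_a N - \varepsilon$, contradicting the limit; hence some $w_n$ satisfies $|w_n|_a \geq \mu_a N - \varepsilon$, which yields the desired $w$, and the symmetric inequality produces $v$. I anticipate no essential obstacle here, as the only delicate point is handling the tail $\sum_{m=K}^{K+N-1}f(m)/K$, which is trivial once $N$ is treated as a fixed parameter. A slicker alternative would be to invoke uniform recurrence of $\u$ (as a fixed point of a primitive substitution), but the Ces\`aro approach is self-contained and uses nothing beyond the definition of $\mu_a$.
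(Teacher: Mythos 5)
Your argument is correct. It differs from the paper's proof mainly in how the averaging is organized: the paper argues directly by contradiction with non-overlapping blocks, namely if every factor of length $N$ had fewer than $\mu_a N-\varepsilon$ occurrences of $a$, then the prefix $\u[kN]$, being a concatenation of $k$ such blocks, would satisfy $|\u[kN]|_a<k(\mu_a N-\varepsilon)$, and letting $k\to\infty$ gives $\mu_a\leq\mu_a-\varepsilon/N$, a contradiction; no discrepancy function and no telescoping are needed. You instead average over all (overlapping) windows $w_n$, introduce $f(n)=|\u[n]|_a-\mu_a n$, and show via telescoping and $f(n)=o(n)$ that the Ces\`aro mean of $|w_n|_a$ tends to $\mu_a N$, then conclude by pigeonhole. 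Both proofs use nothing beyond the definition of $\mu_a$; yours is somewhat heavier in bookkeeping (the tail estimate $\frac{N}{K}\max_{K\leq m\leq K+N-1}|f(m)|\to0$, which you handle correctly), while the paper's tiling argument gets the same conclusion in two lines. Your sliding-window version does have the minor virtue of showing the mean over \emph{all} positions converges, which is slightly more information than needed, but for this observation the disjoint-block argument is the more economical route.
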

\begin{proof}
    Assume that there exist $\varepsilon > 0$ and $N \geq 1$ such that for any factor $w$ of length $N$, the inequality $|w|_a < \mu_a N - \varepsilon$ holds. It means that for the prefix of $\u$ of length $n=kN$, we obtain $|\u[n]| = |\u[kN]|_a < (\mu_aN - \varepsilon)k$. This implies $\mu_a = \lim\limits_{n\to+\infty}\frac{\fixK{|}\u[n]\fixK{|}_a}{n} = \lim\limits_{k\to+\infty}\frac{\fixK{|}\u[kN]\fixK{|}_a}{kN} < \mu_a - \frac{\varepsilon}{N}$, which is a contradiction. The proof of existence of $v$ is analogous.
\end{proof}

The difference between the expected and actual number of letters $a$ defines the discrepancy function $D_a: \N \to \R$;
\[
    D_a(n) = |\u[n]|_{\fixK{a}} - \mu_a n
\]
for any $n\in\N$.

\begin{lemma}\label{lem_ca_pomoci_delt}
    For any letter $a$, denote
    \[
        \Delta_a := \sup_{n\in\N} D_a(n) - \inf_{n\in\N} D_a(n).
    \]
    Then $\Delta_a \leq c_a \leq 2 \Delta_a$.
\end{lemma}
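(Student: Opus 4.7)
The plan is to reduce both inequalities to a single telescoping identity for factor counts. For any factor $w=\u_i\u_{i+1}\cdots\u_{i+n-1}$ of $\u$,
\[
|w|_a \;=\; |\u[i+n]|_a - |\u[i]|_a \;=\; \bigl(D_a(i+n)-D_a(i)\bigr) + \mu_a n.
\]
Everything below will follow from this identity, the definition of $\Delta_a$, and one appeal to Observation~\ref{obs_hust}.

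For the upper bound $c_a\le 2\Delta_a$ I would simply subtract the identity applied to two factors $v,w\in\L(\u)$ of the same length $n$, obtaining
\[
|w|_a-|v|_a \;=\; \bigl(D_a(i+n)-D_a(i)\bigr)-\bigl(D_a(j+n)-D_a(j)\bigr).
\]
Each bracketed difference lies in $[-\Delta_a,\Delta_a]$ by the very definition of $\Delta_a$, so the right-hand side cannot exceed $2\Delta_a$. No further work is needed on this side.

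The lower bound $\Delta_a\le c_a$ is the content-bearing direction. Fix $\varepsilon>0$ and choose $n_s,n_i\in\N$ with $D_a(n_s)>\sup_n D_a(n)-\varepsilon$ and $D_a(n_i)<\inf_n D_a(n)+\varepsilon$. Suppose for the moment that $n_i<n_s$, and set $N:=n_s-n_i$. Taking $w$ to be the factor of length $N$ that starts at position $n_i$, the key identity yields $|w|_a>\mu_a N+\Delta_a-2\varepsilon$. Observation~\ref{obs_hust} then supplies a factor $v\in\L(\u)$ of length $N$ with $|v|_a\le\mu_a N+\varepsilon$, so that $|w|_a-|v|_a>\Delta_a-3\varepsilon$. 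Since $\varepsilon$ is arbitrary, this forces $c_a\ge\Delta_a$.

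The only point requiring care, and the one I expect to be the mild obstacle, is the case $n_s<n_i$, in which the construction above produces a factor with few occurrences of $a$ rather than many. The remedy is to swap the roles of $v$ and $w$: take $v$ to be the factor of length $N=n_i-n_s$ starting at position $n_s$ (so that $|v|_a<\mu_a N-\Delta_a+2\varepsilon$), and use Observation~\ref{obs_hust} to produce a factor $w$ of length $N$ with $|w|_a\ge\mu_a N-\varepsilon$; again $|w|_a-|v|_a>\Delta_a-3\varepsilon$. Beyond this symmetric bookkeeping I do not foresee any genuine difficulty.
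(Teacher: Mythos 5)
Your proposal is correct and follows essentially the same route as the paper: the upper bound is the same telescoping of $|w|_a-|v|_a$ into four prefix discrepancies, and the lower bound uses the same choice of near-extremal $n_s,n_i$, the intermediate factor between those positions, and Observation~\ref{obs_hust}, with the same $3\varepsilon$ loss. The only cosmetic difference is that you phrase the prefix decomposition via the identity $|w|_a=D_a(i+n)-D_a(i)+\mu_a n$, which also spares you the paper's implicit assumption that the maximum defining $c_a$ is attained.
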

\begin{proof}
    Let $w,v \in \L(\u)$ be factors of the same length such that $c_a = |w|_a - |v|_a$. We can find prefixes $W$ and $V$ of $\u$ such that $Ww$ and $Vv$ are prefixes of $\u$ as well. Obviously
    \begin{align*}
        |w|_a - |v|_a &= |Ww|_a - |W|_a - |Vv|_a + |V|_a = D_a(|Ww|) - D_a(|W|) - D_a(|Vv|) + D_a(|V|) \\
        &\leq
        2\sup_{n\in\N} D_a(n) - 2\inf_{n\in\N} D_a(n) = 2\Delta_a.
    \end{align*}
    To deduce the lower bound on $c_a$, let us choose $\varepsilon>0$. There exist prefixes of $\u$, say $\u[n_1]$ and $\u[n_2]$\fixK{,} such that $D_a(n_1) > \sup_{n\in\N} D_a(n) - \varepsilon$ and $D_a(n_2) < \inf_{n\in\N} D_a(n) + \varepsilon$, or equivalently
    \begin{align*}
        |\u[n_1]|_a &> \mu_a n_1 + \sup_{n\in\N} D_a(n) - \varepsilon, \\
        |\u[n_2]|_a &< \mu_a n_2 + \inf_{n\in\N} D_a(n) + \varepsilon.
    \end{align*}

    First suppose that $n_1 > n_2$ and put $N := n_1 - n_2$. Denote the suffix of $\u[n_1]$ of length $N$ by $\tilde{W}$. Then $\tilde{W}$ contains at least $\mu_a N + \sup_{n\in\N} D_a(n) - \inf_{n\in\N} D_a(n) - 2\varepsilon$ letters $a$.

According to Observation~\ref{obs_hust}, there exists a factor $W$ of length $N$ such that $|W|_a \leq \mu_a N + \varepsilon$.
Hence $c_a \geq |\tilde{W}|_a - |W|_a \geq \sup_{n\in\N} D_a(n) - \inf_{n\in\N} D_a(n) - 3\varepsilon =\Delta_a -3\varepsilon.$

The case $n_1 < n_2$ is analogous.
\end{proof}

To find the value $\Delta_a$, we apply the method of Adamczewski used in~\cite{Adam,Adam2}. Let us first recall the notation used in this method.

Let $M$ be a matrix of the substitution~\eqref{eq_mbon_subst}. Since entries of $M$ are defined as $M_{a,b} = |\varphi(b)|_a$ for $a,b\in\{0,1,\ldots,m-1\}$, we have
\[
    M =
    \begin{pmatrix}
        1 & 1 & 1 & \ldots & 1 & 1 \\
        1 & 0 & 0 & \ldots & 0 & 0 \\
        0 & 1 & 0 & \ldots & 0 & 0 \\
        \vdots\\
        0 & 0 & 0 & \ldots & 1 & 0
    \end{pmatrix}
    \in \R^{m\times m}.
\]

By $\parikh(w)$ we denote the Parikh vector of the word $w \in
\A^{*}$, i.e., $\parikh(w) = \left( |w|_0, |w|_1,\ldots,|w|_{m-1}
\right)^{\intercal}$. The matrix of a substitution helps
effectively calculate the Parikh vector of an image $w$
under~$\varphi$. It is easy to see that
\begin{equation}\label{eq_obraz_pres_parikh} 
    \parikh(\varphi(w)) = M \parikh(w) \qquad \text{for any } w\in\A^{*}.
\end{equation}

\begin{lemma}\label{lem_parikh_prefixu}
    For any prefix $\u[n]$ of the $m$-bonacci word $\u$, there \fixE{exist }$\ell \in \N$ and $\delta_0,\delta_1,\ldots,\delta_\ell \in \{0,1\}$ such that
    \begin{equation}\label{eq_parikh_jako_soucet} 
         \parikh(\u[n]) = \sum_{k=0}^{\ell}\delta_k M^k \parikh(0).
    \end{equation}

    Moreover, for any choice \fixK{of} $\ell\in\{0,1,2,\ldots\}$ and $\delta_0,\ldots,\delta_\ell\in\{0,1\}$, there exists a prefix $\u[n]$ of $\u$ such that~\eqref{eq_parikh_jako_soucet} holds.
\end{lemma}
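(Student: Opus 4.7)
The approach is to exploit the self-similar structure $\u=\varphi(\u)$, which canonically expresses $\u$ as an infinite concatenation of $\varphi$-blocks $\varphi(\u_0)\varphi(\u_1)\cdots$, each of length $1$ or $2$ and each starting with the letter $0$. Iterating~\eqref{eq_obraz_pres_parikh} yields $M^k\parikh(0)=\parikh(\varphi^k(0))$, so both parts of the lemma reduce to showing that the prefixes of $\u$ coincide with the concatenations $\varphi^{k_1}(0)\varphi^{k_2}(0)\cdots\varphi^{k_r}(0)$ indexed by strictly decreasing sequences $k_1>k_2>\cdots>k_r\geq 0$.

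For the first (``existence of a decomposition'') part I would induct on $n$. Given $n\geq 1$, let $\ell$ be the largest integer with $n':=\sum_{i<\ell}|\varphi(\u_i)|\leq n$, so that $n-n'\in\{0,1\}$. In the case $n=n'$ one has $\u[n]=\varphi(\u[\ell])$ with $\ell<n$ (since $|\varphi(\u_0)|=2$), and applying $\varphi$ to the decomposition of $\u[\ell]$ supplied by the inductive hypothesis shifts every index up by one and yields the required decomposition of $\u[n]$. In the case $n=n'+1$ the block $\varphi(\u_\ell)$ must have length $2$, so the letter $\u_{n-1}$ is its initial $0$; $\u[n']$ belongs to the previous case and hence admits a decomposition whose indices are all $\geq 1$, to which one appends $\varphi^0(0)=0$.

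For the second part I would induct on $k_1$. If $k_r\geq 1$, the inductive hypothesis applied to $(k_1-1,\ldots,k_r-1)$ provides a prefix $v$ of $\u$ equal to $\varphi^{k_1-1}(0)\cdots\varphi^{k_r-1}(0)$; since $\u=\varphi(\u)$, the morphism $\varphi$ sends prefixes of $\u$ to prefixes of $\u$, so $\varphi(v)=\varphi^{k_1}(0)\cdots\varphi^{k_r}(0)$ is a prefix of $\u$. If $k_r=0$ and $r\geq 2$, then $k_{r-1}\geq 1$ and the preceding case yields $\varphi^{k_1}(0)\cdots\varphi^{k_{r-1}}(0)$ as a prefix of $\u$ ending at a $\varphi$-block boundary, so the next letter of $\u$ is the initial $0$ of the following block, which is precisely the missing $\varphi^0(0)$. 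The base case $r=1$, $k_1=0$ is trivial.

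The conceptual point---really the only non-routine step---is to recognise that $\varphi$ plays a dual role: it shifts every index upward by $1$, and the fact that every $\varphi$-image is immediately followed in $\u$ by a letter $0$ furnishes the mechanism for inserting or removing the terminal $\varphi^0(0)$. With this in hand, both directions become short two-case inductions, and no delicate estimation on lengths $L_k=|\varphi^k(0)|$ is needed.
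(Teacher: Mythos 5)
Your proof is correct, and it arrives at exactly the decomposition the paper uses (prefixes of $\u$ are the concatenations $\varphi^{k_1}(0)\cdots\varphi^{k_r}(0)$ with $k_1>\cdots>k_r\geq 0$, so $\parikh(\u[n])=\sum_k\delta_kM^k\parikh(0)$), but by a different, self-contained route. The paper obtains the existence direction by citing the Dumont--Thomas prefix decomposition $\u[n]=\varphi^{\ell}(E_\ell)\cdots\varphi(E_1)E_0$ with each $E_k$ a proper prefix of some $\varphi(a)$, and then observes that for $\varphi_m$ the only proper prefixes are $\epsilon$ and $0$; for the converse it gives only a brief remark that, thanks to $\varphi^m(0)=\varphi^{m-1}(0)\cdots\varphi(0)\,0$, every choice of $E_i\in\{\epsilon,0\}$ is realized by a prefix. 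You instead prove both directions directly: the existence part by induction on $n$ using the block factorization $\u=\varphi(\u_0)\varphi(\u_1)\cdots$ (your maximality argument correctly forces $n-n'\in\{0,1\}$ and, in the $n=n'+1$ case, the appended letter is the leading $0$ of the next block), and the converse by induction on the top index, using that $\varphi$ maps prefixes of $\u$ to prefixes of $\u=\varphi(\u)$ and that every image $\varphi(a)$ begins with $0$. What your version buys is a complete elementary proof with no external citation and, in particular, a fully written-out argument for the converse, which in the paper is only sketched via the prefix graph; what the paper's version buys is brevity and the structural identity $\varphi^m(0)=\varphi^{m-1}(0)\cdots\varphi(0)\,0$ as the explicit reason why no digit restrictions arise for this substitution. (A negligible common edge case: the all-zero choice of $\delta_k$ corresponds to the empty prefix, in your argument as well as in the paper's.)
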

\begin{proof}
    According to result~\cite{Dum}, for any prefix there exist words $E_\ell \not=\epsilon, E_{\ell-1},\ldots,E_1,E_0$ ($\epsilon$ is \fixK{the} empty word) such that
    \begin{equation}\label{eq_prefix_obrazama}
        \u[n] = \varphi^\ell(E_\ell)\varphi^{\ell-1}(E_{\ell-1})\cdots\varphi(E_1)\fixK{E_0}
    \end{equation}
    and for any $k$, the word $E_k$ is a proper prefix of $\varphi(a)$ for some letter $a\in\A$.

    For our substitution $\varphi$, the only proper prefixes of $\varphi(a)$ are $E_k=\epsilon$ and $E_k=0$. Since the Parikh vector of a concatenation of words is the sum of their Parikh vectors, we have
    \[
        \parikh(\u[n]) = \sum_{k=0}^{\ell}\delta_k\parikh\left(\varphi^k(0)\right),
    \]
    where $\delta_k=1$ if $E_k=0$ and $\delta_k=0$ if $E_k = \epsilon$. Applying formula~\eqref{eq_obraz_pres_parikh} to $\parikh(\varphi^k(0))$, we get~\eqref{eq_parikh_jako_soucet}.

    In general, not all sequences of $E_\ell,E_{\ell-1},\ldots,E_1,E_0$ correspond to a prefix of $\u$. The relevant sequences are described by paths in so called prefix graph of substitution. Nevertheless, since for our substitution the equality $\varphi^m(0) = \varphi^{m-1}(0) \varphi^{m-2}(0) \fixK{\cdots} \varphi(0) 0$ holds, any choice of $E_i \in\{\epsilon,0\}$ gives a prefix of $\u$.
\end{proof}

Knowledge of \fixK{the} Parikh vector $\parikh(\u[n])$ enables us to compute discrepancy $D_a(n)$. To make arithmetic manipulation more elegant, Adamczewski denotes row vectors
\begin{align*}
    h^{(0)} &= (1,0,\ldots,0) - \mu_0(1,1,\ldots,1),\\
    h^{(1)} &= (0,1,\ldots,0) - \mu_1(1,1,\ldots,1),\\
    & \vdots \\
    h^{(m-1)} &= (0,\ldots,0,1) - \mu_{m-1}(1,1,\ldots,1),
\end{align*}
and expresses the discrepancy as the scalar product
\begin{equation}\label{eq_diskrepance_parikh} 
    D_a(n) = h^{(a)}\parikh(\u[n]).
\end{equation}
Verification of the formula is straightforward.

Now we can formulate the main tool for estimation of $c_a$.

\begin{proposition}\label{prop_g}
    For any $a\in\{0,1,\ldots,m-1\}$ and $k\in\N$, denote
\begin{equation}\label{g(a,k) def}
g(a,k)=\left|\varphi^k(0)\right|_a-\mu_a\cdot\left|\varphi^k(0)\right|\,,
\end{equation}
where $\mu_a$ is the density of the letter $a$ in $\u$.
Then
    \begin{equation}\label{eq_def_g} 
        g(a,k) = T_{k+m-a-1} - \frac{1}{\beta^{a+1}} T_{k+m}
    \end{equation}
and
    \[
        \sup_{n\in\N} D_a(n) - \inf_{n\in\N} D_a(n) = \sum_{k=0}^{+\infty}|g(a,k)|
    \]
\end{proposition}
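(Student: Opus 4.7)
The proof splits naturally into two halves, one per display. For the closed form~\eqref{eq_def_g}, the plan is to read off $|\varphi^k(0)|_a$ directly from the matrix~$M$. Since $\parikh(\varphi^k(0)) = M^k\parikh(0)$ and $\parikh(0)=(1,0,\ldots,0)^\intercal$, the count $|\varphi^k(0)|_a$ is the $(a,0)$-entry of $M^k$. A short induction on $k$ using the structure of $M$---row~$0$ all ones and row~$a\geq 1$ a single~$1$ in column $a-1$---yields $|\varphi^k(0)|_a = T_{k+m-a-1}$; summing over $a$ and applying \eqref{eq_def_mbon_posl} then gives $|\varphi^k(0)| = T_{k+m}$. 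Passing to the limit $k\to\infty$ and using $T_{n+1}/T_n\to\beta$ identifies the density $\mu_a = \beta^{-(a+1)}$. Substituting these expressions into \eqref{g(a,k) def} produces~\eqref{eq_def_g}.

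For the series expression I would combine \eqref{eq_diskrepance_parikh} with Lemma~\ref{lem_parikh_prefixu} and the identity
\[
h^{(a)} M^k\parikh(0) = h^{(a)}\parikh(\varphi^k(0)) = |\varphi^k(0)|_a-\mu_a|\varphi^k(0)| = g(a,k).
\]
This shows that the set $\{D_a(n):n\in\N\}$ coincides with all finite sums $\sum_{k=0}^{\ell}\delta_k g(a,k)$ over $\ell\in\N$ and $\delta_k\in\{0,1\}$. The essential input from Lemma~\ref{lem_parikh_prefixu} is that \emph{every} such binary sequence is realised by some prefix, so the $\delta_k$ may be optimised independently. Provided $\sum_k|g(a,k)|<\infty$, a sign-matching choice of $\delta_k$ then gives $\sup_n D_a(n) = \sum_k g(a,k)^+$ and $\inf_n D_a(n) = -\sum_k g(a,k)^-$, whose difference is the claimed series $\sum_k|g(a,k)|$.

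The one analytic ingredient is summability of $|g(a,k)|$, obtained from the Binet-type expansion $T_n = \sum_{j=0}^{m-1} c_j\beta_j^n$ for the $m$-bonacci recurrence. The dominant term ($j=0$) cancels in $g(a,k) = T_{k+m-a-1} - \beta^{-(a+1)}T_{k+m}$ precisely because $\mu_a$ was chosen equal to $\beta^{-(a+1)}$, so only the Galois conjugates $\beta_j$, $j\geq 1$, contribute, and these decay geometrically since $|\beta_j|<1$. The main subtlety I anticipate is that $\sup D_a$ and $\inf D_a$ are attained only as limits over increasingly long prefixes rather than by a single prefix; but summability makes passing to the limit immediate, so apart from this routine issue the argument is a direct chaining of Lemma~\ref{lem_parikh_prefixu}, the identity above, and elementary asymptotics of linear recurrences.
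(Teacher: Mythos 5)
Your proposal is correct and follows essentially the same route as the paper: the closed form comes from $M^k\parikh(0)=(T_{k+m-1},\ldots,T_k)^\intercal$, and the series identity from combining the discrepancy formula $D_a(n)=h^{(a)}\parikh(\u[n])$ with Lemma~\ref{lem_parikh_prefixu} and a sign-matching choice of the $\delta_k$. The only (harmless) variations are that you identify $\mu_a=\beta^{-(a+1)}$ via the ratio limit $T_{n+1}/T_n\to\beta$ along the prefixes $\varphi^k(0)$ rather than via the normalized Perron eigenvector of $M$, and that you make explicit the summability of $|g(a,k)|$, which the paper defers to the remark following the proposition.
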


\begin{proof}
    At first, since $g(a,k)$ is nothing but $D_a(|\varphi^k(0)|)$, equation~\eqref{eq_diskrepance_parikh} gives $g(a,k)=h^{(a)}\Psi(\varphi^k(0))$.
    Using equation~\eqref{eq_obraz_pres_parikh}, we obtain $\Psi(\varphi^k(0)) = M^k \parikh(0)$, hence
    \begin{equation}\label{predctrnact}
    g(a,k) = h^{(a)}M^k \parikh(0)\,.
    \end{equation}
    This expression combined with equations~\eqref{eq_parikh_jako_soucet} and \eqref{eq_diskrepance_parikh} gives $D_a(n) = \sum_{k=0}^{\ell}\delta_k g(a,k)$, where $\delta_k\in\{0,1\}$.
    Clearly, \
    $\sup\limits_{n\in\N} D_a(n) \leq \!\! \sum\limits_{\begin{subarray}{c}k=0\\g(a,k)>0\end{subarray}}^{+\infty}\!\!g(a,k)$ \  and \
    $\inf\limits_{n\in\N} D_a(n) \geq \!\! \sum\limits_{\begin{subarray}{c}k=0\\g(a,k)<0\end{subarray}}^{+\infty}\!\!g(a,k)$.
    According to Lemma~\ref{lem_parikh_prefixu}, any choice of $\delta_i$'s corresponds to a prefix of $\u[n]$, and, therefore, the equalities are reached in the previous inequalities. To sum up,
    \[
        \sup_{n\in\N} D_a(n) - \inf_{n\in\N} D_a(n) =
         \sum\limits_{\begin{subarray}{c}k=0\\g(a,k)>0\end{subarray}}^{+\infty}\!\!g(a,k) -  \sum\limits_{\begin{subarray}{c}k=0\\g(a,k)<0\end{subarray}}^{+\infty}\!\!g(a,k)=
        \sum_{k=0}^{+\infty}|g(a,k)|.
    \]

    In order to prove equation~\eqref{eq_def_g}, let us observe that
    \[
        \begin{pmatrix}
            T_n\\
            T_{n-1}\\
            \vdots\\
            T_{n-m+1}
        \end{pmatrix} = M
        \begin{pmatrix}
            T_{n-1}\\
            T_{n-2}\\
            \vdots\\
            T_{n-m}
        \end{pmatrix}.
    \]
    Since $\left(T_{m-1},T_{m-2},\ldots,T_0\right) = \left(1,0,0,\ldots,0\right)=\left(\parikh(0) \right)^\intercal,$
    we get using \eqref{predctrnact}
    \begin{equation}\label{ctrnact}   
        g(a,k) = h^{(a)}M^k \parikh(0) = h^{(a)}\left(T_{m+k-1},T_{m+k-2},\ldots,T_k \right)^\intercal\,.
    \end{equation}

    It is readily seen that the vector $\vec{\mu} = \left( \beta^{-1},\beta^{-2},\ldots,\beta^{-m} \right)^\intercal$ is an eigenvector of $M$ corresponding to the dominant eigenvalue $\beta$. Moreover, sum of components of $\vec{\mu}$ equals $1$. It is well known that a vector $\vec{\mu}$ with these properties is the vector of letter densities,  see \cite{Queffelec}.  It means that for any letter $a\in\{0,1,\ldots,m-1\}$, the density of letter $a$ is $\mu_a = \beta^{-1-a}$.  If we apply this fact to~\eqref{ctrnact} and use the relation~\eqref{eq_def_mbon_posl}, we find
    \[
    g(a,k)=T_{m+k-a-1} - \beta^{-a-1} T_{m+k}\,.
    \]
\end{proof}

\begin{corollary}\label{bound}
The balance constants of the $m$-bonacci word satisfy
\begin{equation}\label{c_a}
c_a \leq 2 \sum_{k=0}^{+\infty} |g(a,k)|
\end{equation}
for all $a\in\A$.
\end{corollary}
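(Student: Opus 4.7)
The plan is to recognize the corollary as an immediate combination of the two results that have just been established: Lemma~\ref{lem_ca_pomoci_delt} supplies the upper estimate $c_a \leq 2\Delta_a$, while Proposition~\ref{prop_g} identifies $\Delta_a = \sup_{n\in\N} D_a(n) - \inf_{n\in\N} D_a(n)$ with $\sum_{k=0}^{+\infty} |g(a,k)|$. Chaining the two, I get $c_a \leq 2\Delta_a = 2\sum_{k=0}^{+\infty} |g(a,k)|$, which is the claimed inequality. So the ``proof'' is essentially a one-line citation of the previous two results.

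The only subtlety worth addressing is that the right-hand side is a well-defined (finite) number, which I would verify briefly. From $g(a,k) = T_{k+m-a-1} - \beta^{-a-1} T_{k+m}$ together with the Binet-type expansion $T_n = C_0\beta^n + \sum_{j=1}^{m-1} C_j \beta_j^n$, the dominant $\beta^{k}$ contributions cancel by the choice of coefficient $\beta^{-a-1}$ (indeed, the row vector $h^{(a)}$ is orthogonal to the Perron eigenvector $\vec{\mu}$, as built into its definition). Since the subdominant roots satisfy $|\beta_j| < 1$, one obtains $|g(a,k)| = O(\rho^k)$ for some $\rho < 1$, so $\sum_{k\geq 0}|g(a,k)|$ converges absolutely and the bound \eqref{c_a} is meaningful.

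I do not expect a genuine obstacle here; the substantive work was already done in Lemma~\ref{lem_ca_pomoci_delt} (relating $c_a$ to the spread of the discrepancy) and in Proposition~\ref{prop_g} (expressing that spread as $\sum_k |g(a,k)|$ via the freedom in the digit expansion \eqref{eq_parikh_jako_soucet}). The corollary merely packages their concatenation in the form that will be convenient for the remainder of the paper, where estimates of $\sum_k |g(a,k)|$ via the spectrum of $M$ take over.
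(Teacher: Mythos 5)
Your proposal matches the paper's proof exactly: the corollary is obtained by chaining Lemma~\ref{lem_ca_pomoci_delt} ($c_a \leq 2\Delta_a$) with Proposition~\ref{prop_g} ($\Delta_a = \sum_{k=0}^{+\infty}|g(a,k)|$). Your additional remark on the finiteness of the sum (via cancellation of the dominant $\beta$-term and $|\beta_j|<1$) is correct but not part of the paper's argument here.
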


\begin{proof}
The estimate follows easily from Lemma~\ref{lem_ca_pomoci_delt} and Proposition~\ref{prop_g};
\[
c_a \leq 2 \Delta_\fixK{a}  = 2\left(\sup_{n\in\N} D_\fixK{a}(n) - \inf_{n\in\N} D_\fixK{a}(n)\right) = 2\sum_{k=0}^{+\infty}|g(\fixK{a},k)|\,.
\]
\end{proof}

\begin{remark}
    To estimate the sum $\sum_{k=0}^{+\infty}|g(a,k)|$, we will use the explicit formula for elements $T_n$ of the $m$-bonacci sequence. The characteristic equation of~\eqref{eq_def_g} is the polynomial $p(x)$ with zeros $\beta= \beta_0,\beta_1,\ldots,\beta_{m-1}$. Hence there exist constants $a_0,a_1,\ldots,a_{m-1}\in\C$ such that
    \[
        T_n = a_0\beta_0^n + a_1\beta_1^n+\ldots+a_{m-1}\beta_{m-1}^{n}.
    \]
    The constants $a_0,a_1,\ldots,a_{m-1}$ depend on the initial values $T_0,T_1,\ldots,T_{m-1}$ only. A standard calculation provides $T_n=\sum_{j=0}^{m-1}\frac{1}{p'(\beta_j)}\beta_j^n$, where $p'$ denotes the derivative of the characteristic polynomial $p$.

    Using~\eqref{eq_def_g}, we can conclude with
    \begin{equation}\label{eq_g_pomoci_char_pol} 
        g(a,k) = \sum_{j=1}^{m-1} \left( {\frac{1}{\beta_j^{a+1}} - \frac{1}{\beta^{a+1}}} \right)\frac{1}{p'(\beta_j)}\beta_j^{k+m}.
    \end{equation}
\end{remark}


\section{Numerical upper bounds on balance constant}\label{Numerics}

According to Corollary~\ref{bound}, the
\fixE{{letter balance constants}} of the
$m$-bonacci word $\u$ can be estimated by the formula
\[
    c_a \leq \left\lfloor 2 \sum_{k=0}^{+\infty} |g(a,k)| \right\rfloor
\]
for any letter $a\in\{0,1,\ldots,m-1\}$ and for all $m\geq2$.

In this section we estimate the expressions $\left\lfloor 2 \sum_{k=0}^{+\infty} |g(a,k)| \right\rfloor$ using a computer calculation. The calculations are very time-consuming for $m$ above $10$, therefore, we confine ourselves to $m\leq12$.

The calculation is based on the following strategy. We sum up the first $n$ members of $(|g_{(a,k)}|)_{k=0}^{+\infty}$ and estimate the rest of them;
\[
    \sum_{k=0}^{+\infty}|g{(a,k)}| \leq \sum_{k=0}^{n - 1}|g{(a,k)}| + E, \qquad\text{ where } E \text{ satisfies } \qquad E \geq \sum_{k=n}^{+\infty}|g{(a,k)}|.
\]
Formula~\eqref{eq_g_pomoci_char_pol} provides setting
\[
    E_{a,n} := |\beta_2|^{n} \sum_{j=1}^{m-1}\left|\left(\frac{1}{\beta_j^{a+1}} - \frac{1}{\beta^{a+1}}\right)\frac{1}{p'(\beta_j)}\right|\frac{|\beta_j|^{n}}{1 - |\beta_j|}.
\]
To conclude, we have to find an $n$ big enough to satisfy
\begin{equation}\label{eq_rovne_B}
    \left\lfloor 2 \sum_{k=0}^{n-1}\left|g{(a,k)}\right| \right\rfloor =
    \left\lfloor 2\left(\sum_{k=0}^{n-1}\left|g{(a,k)}\right| + E_{a,n} \right) \right\rfloor.
\end{equation}

Since we always compute on machines working in a finite precision, it is desirable to reduce the work with non-integer numbers. Therefore, we make use of the fact that, for a fixed letter $a$ and the alphabet cardinality $m$, the sequence of numbers $g{(a,k)}$ satisfies the $m$-bonacci recurrence relation
\[
g{(a,n+m)} = g{(a,n+m-1)} + \ldots + g{(a,n)}\,,
\]
which follows from Proposition~\ref{prop_g}.

Let us demonstrate the method on the $4$-bonacci word. The first step is calculating\footnote{The calculation must be performed in an environment working in enough precision, e.g., Wolfram Mathematica.} $\mathrm{sgn}\, g{(a,k)}$ from~\eqref{eq_def_g} for all $k \in \{0,\ldots,m-1\}$ (illustrated in Table~\ref{tab_table1_4bonacci}). Then we express $\sum_{k=0}^{n- 1}|g{(a,k)}|$ as an integer combination (IC) of $\left(\begin{smallmatrix}g{(a,0)}\\ \vdots \\ g{(a,m-1)}\end{smallmatrix}\right)$, which can be rewritten in the form $p + \frac{q}{\beta^{a+1}}$ for some $p,q \in \Z$ (this follows from Proposition~\ref{prop_g}) and then evaluated\footnotemark[\value{footnote}] \fixK{(}see Table~\ref{tab_table2_4bonacci}). The final step is verification of the equality~\eqref{eq_rovne_B}.

To make our procedure reliable with respect to possible rounding errors, we replace the estimated error $E_{a,m}$ by a constant $E > E_{a,m}$. If~\eqref{eq_rovne_B} holds, it is equal to the desired upper bound of $c_a$ (but it may not be optimal). In the opposite case, we must increase~$n$ and repeat the procedure.

Our results obtained for $m\in\{2,\ldots,12\}$ are summarized in Table~\ref{tab_table3}.

%
%
\begin{table}
\centering
\caption{{\bf $4$-bonacci} -- $g(a,k)$  with quadruple of  integer coefficients in linear  combination  of $g(a,0),\ldots,g(a,3)$ and its signum.}
\label{tab_table1_4bonacci}
\begin{tabular}{c|c|cccc}
& IC of $(g(a,k))_{k=0}^{3}$& $a=0$& $a=1$& $a=2$& $a=3$\\\hline
$g(a,0)$ & $(1,0,0,0)$  & $+$ & $-$ & $-$ & $-$ \\
$g(a,1)$ & $(0,1,0,0)$  & $-$ & $+$ & $-$ & $-$ \\
$g(a,2)$ & $(0,0,1,0)$  & $-$ & $-$ & $+$ & $-$ \\
$g(a,3)$ & $(0,0,0,1)$  & $-$ & $-$ & $-$ & $+$ \\
$g(a,4)$ & $(1,1,1,1)$  & $+$ & $-$ & $-$ & $-$ \\
$g(a,5)$ & $(1,2,2,2)$  & $-$ & $+$ & $-$ & $-$ \\
$g(a,6)$ & $(2,3,4,4)$  & $-$ & $-$ & $+$ & $-$ \\
$g(a,7)$ & $(4,6,7,8)$  & $-$ & $-$ & $-$ & $+$ \\
$g(a,8)$ & $(8,12,14,15)$  & $+$ & $+$ & $-$ & $-$ \\
$g(a,9)$ & $(15,23,27,29)$  & $-$ & $+$ & $+$ & $-$ \\
$g(a,10)$ & $(29,44,52,56)$  & $-$ & $-$ & $+$ & $+$ \\
$g(a,11)$ & $(56,85,100,108)$  & $+$ & $-$ & $-$ & $+$ \\
$g(a,12)$ & $(108,164,193,208)$  & $+$ & $+$ & $-$ & $-$ \\
\end{tabular}
\end{table}

%
%
\begin{table}
\caption{{\bf $4$-bonacci} -- Estimates of $\sum_{k=0}^{+\infty}|g(a,k)|$ and the resulting upper bound on $c_a$.}
\label{tab_table2_4bonacci}
\centering
\begin{tabular}{p{2.5cm}||p{2.5cm}|p{2.5cm}|p{2.5cm}|p{2.5cm}}
 & $a=0$ & $a=1$ & $a=2$ & $a=3$\\\hline\hline
$\displaystyle\sum_{k=0}^{12}|g(a,k)|$ as IC & $\left(\begin{smallmatrix}123\\183\\215\\232\end{smallmatrix}\right)$ & $\left(\begin{smallmatrix}39\\63\\71\\76\end{smallmatrix}\right)$ & $\left(\begin{smallmatrix}-133\\-201\\-233\\-254\end{smallmatrix}\right)$ & $\left(\begin{smallmatrix}-47\\-71\\-83\\-86\end{smallmatrix}\right)$\\\hline
$\displaystyle\sum_{k=0}^{12}|g(a,k)|$ symbolic & $1664-\frac{3205}{\beta }$ & $286-\frac{1057}{\beta ^2}$ & $\frac{3499}{\beta ^3}-487$ & $\frac{1209}{\beta ^4}-86$\\\hline
$\displaystyle\sum_{k=0}^{12}|g(a,k)|$ numerical & $1.2778$ & $1.5157$ & $1.5611$ & $1.5776$\\\hline
$E_{a,13}$ & $0.20054$ & $0.22213$ & $0.25916$ & $0.31056$\\\hline
$\displaystyle\sum_{k=0}^{12}|g(a,k)| + E$ & $1.49844$ & $1.76006$ & $1.84618$ & $1.91919$\\\hline\hline
$c_a$ upper bound & $2$  & $3$  & $3$  & $3$ \\
\end{tabular}
\end{table}

%
%
\begin{table}
\centering
\caption{Upper estimates of $c_a$ for $m\in\{2,\ldots,12\}$, $a \in \{0,\ldots,m-1\}$.}
\label{tab_table3}
\begin{tabular}{l||c|c|c|c|c|c|c|c|c|c|c|c}
$m$ \ $\backslash$ \ $a$ & $0$ & $1$ & $2$ & $3$ & $4$ & $5$ & $6$ & $7$ & $8$ & $9$ & $10$ & $11$  \\\hline\hline
$2$  & $1$ & $1$ &  $\times$ & $\times$ & $\times$ & $\times$ & $\times$ & $\times$ & $\times$ & $\times$ & $\times$ & $\times$  \\\hline
$3$  & $2$ & $2$ & $2$ & $\times$ & $\times$ & $\times$ & $\times$ & $\times$ & $\times$ & $\times$ & $\times$ & $\times$ \\\hline
$4$  & $2$ & $3$ & $3$ & $3$ & $\times$ & $\times$ & $\times$ & $\times$ & $\times$ & $\times$ & $\times$ & $\times$\\\hline
$5$  & $2$ & $3$ & $3$ & $3$ & $3$ & $\times$ & $\times$ & $\times$ & $\times$ & $\times$ & $\times$ & $\times$\\\hline
$6$  & $3$ & $3$ & $4$ & $4$ & $4$ & $4$ & $\times$ & $\times$ & $\times$ & $\times$ & $\times$ & $\times$\\\hline
$7$  & $3$ & $4$ & $4$ & $4$ & $4$ & $4$ & $4$ & $\times$ & $\times$ & $\times$ & $\times$ & $\times$\\\hline
$8$  & $3$ & $4$ & $4$ & $4$ & $4$ & $4$ & $4$ & $4$ & $\times$ & $\times$ & $\times$ & $\times$\\\hline
$9$  & $3$ & $4$ & $5$ & $5$ & $5$ & $5$ & $5$ & $5$ & $5$ & $\times$ & $\times$ & $\times$\\\hline
$10$  & $3$ & $5$ & $5$ & $5$ & $5$ & $5$ & $5$ & $5$ & $5$ & $5$ & $\times$ & $\times$\\\hline
$11$  & $4$ & $5$ & $5$ & $6$ & $6$ & $6$ & $6$ & $6$ & $6$ & $6$ & $6$ & $\times$ \\\hline
$12$  & $4$ & $5$ & $6$ & $6$ & $6$ & $6$ & $6$ & $6$ & $6$ & $6$ & $6$ & $6$ \\\hline
\end{tabular}
\end{table}

To find lower bounds on the constant $c$, one needs to find two factors $v, w$ of the $m$-bonacci word that are of the same length with $|w|_a - |v|_a$ big enough. Computer searching in the set of all factors is very time-consuming. Nevertheless, for any given $m\geq4$ and any $a \in \{1,\ldots,m-1\}$, a modification of the abelian co-decomposition method~\cite{Tur12} allowed us to find a pair of factors $v, w$ of the $m$-bonacci word such that $|v|=|w|$ and $|v|_a - |w|_a = 3$.
For instance,
if $m=4$, the words
\[
v=1\varphi^{12}(0)\varphi^{9}(0)\varphi^{5}(0)\varphi^{2}(0)\,,
\]
\[
w=\left(\varphi^{9}(0)\varphi^{8}(0)\varphi^{5}(0)\varphi^{2}(0)\right)^{-1}\varphi^{11}(00)\varphi^{10}(0)\varphi^{7}(0)\varphi^{6}(0)\varphi^{4}(0)\varphi^{3}(0)\varphi^{2}(0)0
\]
are factors of $\u$ such that $|v|=|w|=3305$, $|v|_1-|w|_1=3$.
Similarly, if $m=5$, the words
\[
v=1\varphi^{14}(0)\varphi^{11}(0)\varphi^{6}(0)\varphi^{2}(0)\,,
\]
\[
w=\left(\varphi^{11}(0)\varphi^{10}(0)\varphi^{6}(0)\varphi^{2}(0)\right)^{-1}\varphi^{13}(00)\varphi^{12}(0)\varphi^{9}(0)\varphi^{8}(0)\varphi^{7}(0)\varphi^{5}(0)\varphi^{3}(0)\varphi^{2}(0)0
\]
are factors of $\u$ such that $|v|=|w|=15481$, $|v|_1-|w|_1=3$.

Therefore, we can conclude with the following theorem.

\begin{theorem}\label{CtyriPet}
    For $m \in\{4,5\}$, the $m$-bonacci word is $c$-balanced with $c=3$ and this bound cannot be improved.

    For $m \in \{6,\ldots,12\}$, the $m$-bonacci word is $c$-balanced for $c = \lceil \frac{m+1}{2} \rceil$.
\end{theorem}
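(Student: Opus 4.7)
The plan has two parts matching the two halves of the statement. For the upper bound, I would invoke Corollary~\ref{bound} together with the numerical computation already set up in Section~\ref{Numerics}. Concretely, for each $m\in\{4,\dots,12\}$ and each letter $a\in\{0,\dots,m-1\}$, the value $\lfloor 2\sum_{k=0}^{+\infty}|g(a,k)|\rfloor$ is an upper bound on $c_a$. Running the procedure summarised by equation~\eqref{eq_rovne_B} for each $(m,a)$ produces the entries of Table~\ref{tab_table3}. Then the $m$-bonacci word is $c$-balanced with $c=\max_{a} c_a$, and reading off the row maxima of Table~\ref{tab_table3} gives exactly $3$ for $m\in\{4,5\}$ and $\lceil (m+1)/2\rceil$ for $m\in\{6,\dots,12\}$.

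For the lower bound in the cases $m\in\{4,5\}$, I would exhibit a pair of factors $v,w\in\L(\u)$ of equal length with $|v|_1-|w|_1=3$, so that $c\geq c_1\geq 3$. The two explicit pairs already displayed before the theorem (for $m=4$ of length $3305$, for $m=5$ of length $15481$) serve precisely this purpose. What remains is to verify that each of them is actually a factor of $\u$ and that the two Parikh components agree as claimed. I would do this by computing Parikh vectors of the building blocks $\varphi^k(0)$ via equation~\eqref{eq_obraz_pres_parikh} and checking that the left-inverse prefixes in the definition of $w$ are genuine prefixes of the subsequent concatenation (the identity $\varphi^m(0)=\varphi^{m-1}(0)\cdots\varphi(0)\cdot 0$ exploited already in the proof of Lemma~\ref{lem_parikh_prefixu} makes these cancellations legitimate). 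Finally, the observation that $v$ and $w$ begin with easily identifiable prefixes of $\u$ lets one confirm membership in $\L(\u)$ directly.

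The main obstacle is not a conceptual one but a verification one: every entry of Table~\ref{tab_table3} is obtained by a finite but delicate numerical comparison of the form~\eqref{eq_rovne_B}, in which one compares the floor of a rational combination of powers of $1/\beta$ with the floor of that same quantity plus a tail bound $E_{a,n}$. To make the proof airtight one must choose $n$ large enough (depending on $m$ and $a$) so that the tail $E_{a,n}$ does not push the expression across an integer threshold, while performing the partial sum $\sum_{k=0}^{n-1}|g(a,k)|$ in arithmetic that is certified to be exact—e.g.\ by working with the integer coefficients shown in the ``IC'' columns of Tables~\ref{tab_table1_4bonacci}--\ref{tab_table2_4bonacci} and only evaluating numerically at the very end with a safe over-estimate $E>E_{a,n}$. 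This is the step where the argument is computer-assisted, and explains why the result is confined to $m\leq 12$; the larger $m$ becomes, the nearer $\beta_2$ lies to the unit circle (cf.\ the spectral analysis in the Appendix), which forces a much larger $n$ to absorb the tail below an integer threshold.
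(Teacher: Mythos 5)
Your proposal is correct and follows essentially the same route as the paper: the upper bounds come from Corollary~\ref{bound} evaluated via the certified numerical procedure of Section~\ref{Numerics} (Table~\ref{tab_table3}), and the optimality for $m\in\{4,5\}$ comes from the explicit factor pairs with $|v|_1-|w|_1=3$ displayed before the theorem. The only slip is the word ``exactly'': for $m=8,10,12$ the row maximum of Table~\ref{tab_table3} is strictly smaller than $\lceil\frac{m+1}{2}\rceil$, but this only strengthens the claim, since being $c'$-balanced with $c'\leq c$ implies being $c$-balanced.
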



\section{Balance property of letters in the $m$-bonacci word}\label{relations}

The numerical calculation, performed in Section~\ref{Numerics}, is convenient only for small values of $m$. In the rest of the paper we develop a technique to estimate the constant $c$ for the balance property of the $m$-bonacci word for a general $m$.
The calculation will be again based on formula~\eqref{c_a}, but this time we bring in an improvement. Instead of estimating the sums $\sum_{k=0}^{+\infty}|g(a,k)|$ for all letters $a\in\A$, we show that in case of the $m$-bonacci word, the balance constants $c_a$ for $a = 1,2,\ldots,m-1$ can be estimated by a simple formula in terms of $c_0$ providing that $c_0$ is small enough, see the following observation.

\begin{proposition}\label{prop_c0} 
    Let $m\geq4$. If $c_0\leq2^{m-1}-3$, then
    \begin{equation}\label{eq_prop}
        c_j \leq \left(2-\frac{1}{2^j}\right)c_0 + 4\left(1 - \frac{1}{2^j}\right)
    \end{equation}
    for each $j=1,2,\ldots,m-1$. In particular, the $m$-bonacci word is $c$-balanced with $c=2c_0+3$.
\end{proposition}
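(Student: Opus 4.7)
The plan is to exploit the desubstitution structure of the $m$-bonacci word, reducing the balance of letter $j$ to the balance of letter $j-1$ at the ``parent level,'' and iterating. Since every image $\varphi(a)$ starts with $0$, each non-zero letter in $\u$ is preceded by a $0$; consequently, any factor $w \in \L(\u)$ admits a canonical decomposition $w = s\,\varphi(w')\,t$, where $w' \in \L(\u)$, $s \in \{\varepsilon, 1, \ldots, m-1\}$ is an optional ``orphan'' non-zero letter at the left boundary, and $t \in \{\varepsilon, 0\}$ is an optional orphan $0$ at the right boundary. From $|\varphi(a)|_0 = 1$ and $|\varphi(a)|_j = [a=j-1]$ for $j \geq 1$, the Parikh identities
\[
|w|_0 = |w'| + [t=0], \qquad |w|_j = [s=j] + |w'|_{j-1} \text{ for } j=1,\ldots,m-1,
\]
follow at once. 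Applied to two equal-length factors $v, w \in \L(\u)$, the first identity yields $\bigl||w'| - |v'|\bigr| \leq c_0 + 1$.

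I would then prove the bound by induction on $j$ via the recursion $c_j \leq \tfrac{1}{2} c_{j-1} + c_0 + 2$, which unrolls to exactly the asserted closed form (direct check: $\tfrac{1}{2}\bigl[(2-2^{-(j-1)})c_0 + 4(1-2^{-(j-1)})\bigr] + c_0 + 2 = (2-2^{-j})c_0 + 4(1-2^{-j})$, using $c_0$ as the base case). For the recursion, take extremal equal-length factors $v, w$ realizing $c_j = |w|_j - |v|_j$, decompose each, and reduce the problem to
\[
|w|_j - |v|_j \leq 1 + \bigl(|w'|_{j-1} - |v'|_{j-1}\bigr).
\]
Since $|w'|$ and $|v'|$ differ by at most $c_0+1$, I would equalize their lengths by extending the shorter factor within $\u$ (or trimming the longer), invoke the inductive bound $c_{j-1}$ on the equal-length pair, and quantify the extension's contribution via the discrepancy function $D_{j-1}$ together with Lemma~\ref{lem_ca_pomoci_delt}.

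The hard part is producing the factor $1/2$ on $c_{j-1}$. A naive one-sided extension of $v'$ by $d = |w'| - |v'|$ letters could add up to $d$ occurrences of $j-1$ and only yields $c_j \leq c_{j-1} + c_0 + 2$, too weak by a factor of $2$. The improvement must come from a more refined accounting -- for instance, a two-sided adjustment (extending one factor by $d/2$ and trimming the other by $d/2$), or replacing the use of $c_{j-1}$ by the sharper discrepancy quantity $\Delta_{j-1} \geq c_{j-1}/2$ appearing in Lemma~\ref{lem_ca_pomoci_delt} when bounding the boundary contributions of the length adjustment. The hypothesis $c_0 \leq 2^{m-1}-3$ is natural here to control the correction term $|w'|_{m-1}$ in the length relation $|w| = 2|w'| - |w'|_{m-1} + |s| + |t|$ across all $m-1$ induction levels, ensuring the recursion does not degenerate near $j=m-1$.

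Finally, the ``in particular'' claim $c \leq 2c_0+3$ follows directly from the closed form: since $(2-2^{-j})c_0 + 4(1-2^{-j}) = 2c_0 + 4 - 2^{-j}(c_0+4) < 2c_0+4$ and $c_j$ is a non-negative integer, we obtain $c_j \leq 2c_0+3$ for every $j \in \{0,1,\ldots,m-1\}$, and hence the $m$-bonacci word is $(2c_0+3)$-balanced.
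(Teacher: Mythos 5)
There is a genuine gap: the entire proof hinges on the recursion $c_j \leq \tfrac12 c_{j-1} + c_0 + 2$, and you leave exactly that step unestablished — you even flag it ("the hard part is producing the factor $1/2$") — and neither of your two candidate fixes can supply it. In your level-one decomposition $w = s\,\varphi(w')\,t$, the term $c_{j-1}$ arises from comparing the $(j-1)$-counts of two \emph{equal-length} factors, and that comparison is by definition worst-case $c_{j-1}$; no refinement of the boundary bookkeeping can shrink its coefficient. A two-sided adjustment (trim one factor by $d/2$, extend the other by $d/2$) still gives corrections totalling $d$ on top of a full $c_{j-1}$, so you recover only $c_j \leq c_{j-1} + c_0 + 2$, which unrolls to $c_j \leq (j+1)c_0 + 2j$ — far weaker than the claimed bound. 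The appeal to Lemma~\ref{lem_ca_pomoci_delt} goes the wrong way: it gives $\Delta_{j-1} \geq \tfrac12 c_{j-1}$, not $\Delta_{j-1} \leq \tfrac12 c_{j-1}$, so replacing $c_{j-1}$ by a discrepancy quantity cannot yield the factor $\tfrac12$ either. Your reading of the hypothesis $c_0 \leq 2^{m-1}-3$ as controlling $|w'|_{m-1}$ in the length relation is also not where it is actually needed.

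The paper obtains the $\tfrac12$ from a structurally different reduction: it desubstitutes at level $j$, not level $1$. Writing the extremal factors as $v = j\varphi^j(V')$ (with $V = 0V'$) and $wzj = \varphi^j(W0)$, Observation~\ref{obs_vyjadreni_f0_f_rovno} gives $|V|_0 - |W|_0 = c_j$ while $|V| - |W|$ is expressed through $(j-1)$-counts of $v,w,z$, hence is at most $c_{j-1}+2$. One then compares $|V|_0$ and $|W|_0$ \emph{directly against $c_0$} after padding the shorter preimage by a word $y$ of length at most $c_{j-1}+2$; the factor $\tfrac12$ comes from the density of zeros in short factors, Observation~\ref{obs_f0_shora}: since $|y| \leq c_{j-1}+2 \leq 2^m$ (this is precisely where the hypothesis, propagated through the induction as $c_{j-1} < 2c_0+4 \leq 2^m-2$, is used), $y$ contains at most one block $00$ and so $|y|_0 \leq \tfrac12|y|+1$, giving $c_j \leq c_0 + 2 + \tfrac12 c_{j-1}$. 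Your induction unrolling and the final deduction $c \leq 2c_0+3$ are correct, but without this (or some equivalent) mechanism the key inequality is unproven, so the proposal does not establish the proposition.
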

With regard to this proposition, it will be sufficient to estimate $\sum_{k=0}^{+\infty}|g(a,k)|$ and use formula~\eqref{c_a} just once, for $a=0$. All the remaining constants $c_a$ for $a=1,\ldots,m-1$ can be then easily estimated using formula~\eqref{eq_prop}.

Before we prove Proposition \ref{prop_c0}, we derive two simple
observations.

\begin{observation}\label{obs_vyjadreni_f0_f_rovno}
    For any factor $f$ of $\u$ and for each $j\in \{1,\ldots,m-1\}$, it holds
    \[
        |f|_0 = |\varphi^j(f)|_j \qquad \text{ and } \qquad |f| = |\varphi^j(f)|_{j-1}.
    \]
\end{observation}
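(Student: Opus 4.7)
The plan is to reduce both identities to two elementary facts about the substitution $\varphi$ and then finish by a short induction on $j$. Since the counting functions $|\cdot|_0$, $|\cdot|_j$, $|\cdot|_{j-1}$, and $|\cdot|$ are all additive under concatenation, and $\varphi^j(uv)=\varphi^j(u)\varphi^j(v)$, it suffices to verify both identities when $f$ is a single letter of $\A$; the general case follows by summing over the letters of $f$.

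Inspecting \eqref{eq_mbon_subst}, I would record the following ``one-step'' behaviour of $\varphi$: for every letter $a\in\A$, the image $\varphi(a)$ contains exactly one occurrence of $0$, and the only non-zero letter appearing in $\varphi(a)$ (if any) is $a+1$. Written as identities on an arbitrary word $g\in\A^{*}$, this yields $|\varphi(g)|_0=|g|$ and $|\varphi(g)|_i=|g|_{i-1}$ for every $i\in\{1,\ldots,m-1\}$.

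For the first identity I would iterate the shift relation $|\varphi(g)|_i=|g|_{i-1}$: one application peels an outer $\varphi$ off $\varphi^j(f)$ and decreases the target letter from $j$ to $j-1$, and after $j$ such applications the target reaches $0$, giving $|\varphi^j(f)|_j=|f|_0$. For the second identity I would apply the shift relation $j-1$ times to get $|\varphi^j(f)|_{j-1}=|\varphi(f)|_0$, and then invoke the $0$-count identity $|\varphi(f)|_0=|f|$.

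There is no substantive obstacle here: the observation is really a combinatorial restatement of the fact that $\varphi$ propagates letter indices one level per application. The only point deserving attention is the range $j\in\{1,\ldots,m-1\}$, which is precisely what keeps the chain of subscripts $j,j-1,\ldots,1,0$ inside the alphabet $\{0,\ldots,m-1\}$ at every step of the iteration, so that each use of $|\varphi(g)|_i=|g|_{i-1}$ is legitimate.
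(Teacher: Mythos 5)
Your proof is correct and follows essentially the same route as the paper: both rest on the one-step identities $|\varphi(w)|_0=|w|$ and $|\varphi(w)|_i=|w|_{i-1}$ read off from \eqref{eq_mbon_subst}, iterated $j$ (resp. $j-1$) times. The reduction to single letters is a harmless extra detail; nothing further is needed.
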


\begin{proof}
    From the form of the substitution~\eqref{eq_mbon_subst}, we see $|w|_{j-1} = |\varphi(w)|_j$ and $|w|=|\varphi(w)|_0$ for any factor $w$ and letter $j=1,2,\ldots,m-1$. Applying these relations on $w=f,\ w=\varphi(f), \ldots,\ w=\varphi^{j-1}(f)$, we get the formulae in the observation.
\end{proof}

\begin{observation}\label{obs_f0_shora}
    If $f$ is a factor of $\u$ such that $|f|\leq2^m$, then $|f|_0\leq\frac{1}{2}|f|+1$.
\end{observation}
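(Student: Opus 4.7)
The plan is to reduce the inequality $|f|_0\leq |f|/2+1$ to the claim that a factor $f$ of $\u$ with $|f|\leq 2^m$ contains at most one occurrence of the factor $00$. Let $d$ denote the number of occurrences of $00$ in $f$ and $r$ the number of maximal runs of $0$s in $f$; then $d=|f|_0-r$ (each run of length $\ell$ contributes $\ell-1$ occurrences of $00$), while the $r$ runs must be separated by at least $r-1$ non-zero letters, giving $|f|\geq 2|f|_0-d-1$, i.e.\
\[
|f|_0 \;\leq\; \tfrac{1}{2}\bigl(|f|+d+1\bigr).
\]
Hence $d\leq 1$ already forces $|f|_0\leq |f|/2+1$.

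Next I locate the occurrences of $00$ in $\u$. In the natural block decomposition $\u=B_0 B_1 B_2\cdots$ with $B_i=\varphi(\u_i)$, every block begins with $0$, so a $00$ arises in $\u$ precisely from a length-$1$ block, i.e.\ at each index $i$ with $\u_i=m-1$. Two consecutive such indices $i<i'$ produce $00$'s whose starting positions in $\u$ differ by $\sum_{k=i}^{i'-1}|B_k|=2(i'-i)-1$, because $B_i$ has length $1$ while $B_{i+1},\ldots,B_{i'-1}$ all have length $2$. The task therefore reduces to proving that any two consecutive occurrences of the letter $m-1$ in $\u$ are separated by at least $2^{m-1}$ positions.

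For this I pass to the coarser decomposition $\u=D_0 D_1 D_2\cdots$ with $D_j=\varphi^{m-1}(\u_j)$, coming from the fixed-point property $\u=\varphi^{m-1}(\u)$. Iterating the identity $|\varphi^k(a)|_{m-1}=|\varphi^{k-1}(a)|_{m-2}$, valid because $\varphi(m-2)$ is the only image of the substitution containing $m-1$, yields $|\varphi^{m-1}(0)|_{m-1}=1$ and $|\varphi^{m-1}(a)|_{m-1}=0$ for $a\neq 0$; and since $\varphi^k(0)$ ends with the letter $k$ for $0\leq k\leq m-1$, this unique $m-1$ in $\varphi^{m-1}(0)$ sits at its very last position. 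Hence two consecutive $m-1$'s in $\u$ are the final letters of two super-blocks $D_j,D_{j'}$ with $\u_j=\u_{j'}=0$, and their separation in $\u$ equals $|D_{j+1}|+\cdots+|D_{j'}|\geq|D_{j'}|=|\varphi^{m-1}(0)|=2^{m-1}$. Combining, any two distinct occurrences of $00$ in $\u$ are at distance at least $2\cdot 2^{m-1}-1=2^m-1$, so a factor containing both of them has length $\geq 2^m+1$; thus $|f|\leq 2^m$ forces $d\leq 1$. The only delicate step is the gap bound, and the key move is the level-$(m-1)$ decomposition, where a single super-block of the full length $2^{m-1}$ suffices to separate any two consecutive $m-1$'s.
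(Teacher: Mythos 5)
Your proof is correct, and its overall strategy is the same as the paper's: reduce the inequality to the claim that any factor of length at most $2^m$ contains at most one occurrence of $00$. The difference is in how that claim is justified. The paper asserts (``it is easy to see'') that the shortest factor beginning and ending with $00$ and containing no other $00$ is $0\varphi^m(0)0$, of length $2^m+1$, and leaves it at that; you actually prove the required separation bound. Your route --- identifying occurrences of $00$ with occurrences of the letter $m-1$ via the decomposition of $\u$ into blocks $\varphi(\u_i)$, and then bounding the gap between consecutive letters $m-1$ by $2^{m-1}$ via the coarser decomposition into blocks $\varphi^{m-1}(\u_j)$, each of which contains $m-1$ at most once, at its last position, and has length $2^{m-1}$ when $\u_j=0$ --- yields a distance of at least $2^m-1$ between starting positions of distinct $00$'s, which matches the paper's extremal factor $0\varphi^m(0)0$ exactly. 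You also make explicit the final counting step (runs of zeros versus occurrences of $00$), which the paper leaves implicit; as a small bonus, your run-count inequality $|f|_0\leq\frac{1}{2}(|f|+d+1)$ does not separately require the paper's observation that $000$ never occurs, since $d\leq1$ excludes longer runs automatically. In short, your version buys a self-contained, verifiable proof of the key separation fact, while the paper's version buys brevity at the cost of an unproved ``easy to see'' step.
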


\begin{proof}
    The form of the substitution $\varphi$ implies that $00$ is the longest block of zeros occurring in $\u$. Further, with exception of this block, the letter $0$ is always sandwiched by nonzero letters. It is easy to see that the shortest factor $w\not=00$, with the prefix $00$ and the suffix $00$ such that $w$ has no other occurrences of $00$, is the factor $w=0\varphi^m(0)0$. Since $|w|=2^m+1$, any factor f with $|f|\leq2^m$ contains at most one block $00$. This implies the inequality for $|f|_0$ stated in the observation.
\end{proof}

The following lemma is the combinatorial core for the proof of Proposition~\ref{prop_c0}.

\begin{lemma}\label{lemma cj pomoci cj-1}
Let $j\in\{1,\ldots,m-1\}$. If $c_{j-1} \leq 2^m-2$, then
\begin{equation}\label{cj pomoci cj-1}
c_j\leq c_0+2+\frac{c_{j-1}}{2}\,.
\end{equation}
\end{lemma}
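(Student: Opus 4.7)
The plan is to compare $|w|_j$ and $|v|_j$ for factors $w,v\in\L(\u)$ of the same length by canonically desubstituting both with respect to the $j$-th iterate $\varphi^j$ of the substitution, rather than with respect to $\varphi$ alone. By Observation~\ref{obs_vyjadreni_f0_f_rovno}, counting the letter $j$ in an image $\varphi^j(f)$ amounts to counting the letter $0$ in $f$, while the length of $f$ equals the letter-$(j-1)$ count of $\varphi^j(f)$. This transfers the problem for letter $j$ into a problem for letter $0$ on a preimage whose length is controlled by $c_{j-1}$, which is precisely why the hypothesis concerns $c_{j-1}$.

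Since $\u=\varphi^j(\u)$ and every $\varphi^j(a)$ starts with $0$, each factor of $\u$ admits a canonical decomposition $w=L_w\,\varphi^j(w^*)\,R_w$, where $L_w$ is a proper suffix of some $\varphi^j(a)$, $R_w$ is a proper prefix of some $\varphi^j(b)$, and $w^*\in\L(\u)$; decompose $v$ analogously. Observation~\ref{obs_vyjadreni_f0_f_rovno} gives
\[
|w|_j=|L_w|_j+|w^*|_0+|R_w|_j, \qquad |w|_{j-1}=|L_w|_{j-1}+|w^*|+|R_w|_{j-1}\,,
\]
and analogously for $v$. Since $|\varphi^j(a)|_j=|a|_0\leq 1$ and $|\varphi^j(a)|_{j-1}=|a|=1$, each of the boundary contributions $|L_w|_j,|R_w|_j,|L_w|_{j-1},|R_w|_{j-1}$, and the four analogous quantities for $v$, lies in $\{0,1\}$.

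Subtracting the two $|{\cdot}|_{j-1}$ identities and using $\bigl||w|_{j-1}-|v|_{j-1}\bigr|\leq c_{j-1}$ yields $\bigl||w^*|-|v^*|\bigr|\leq c_{j-1}+2$. Assume WLOG $d:=|w^*|-|v^*|\geq 0$ and extend $v^*$ on the right inside $\u$ to a factor $v^{**}$ of length $|w^*|$ by appending a block $e$ of length $d$. The hypothesis $c_{j-1}\leq 2^m-2$ gives $d\leq 2^m$, so Observation~\ref{obs_f0_shora} applied to $e$ yields $|e|_0\leq d/2+1$. Combining with the definition of $c_0$ applied to the equal-length pair $w^*,v^{**}$,
\[
|w^*|_0-|v^*|_0 \leq c_0+d/2+1 \leq c_0+c_{j-1}/2+2\,.
\]
Together with the $j$-boundary mismatch from the $|{\cdot}|_j$ identity this delivers a bound of the form $c_j\leq c_0+c_{j-1}/2+O(1)$.

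The main obstacle is tightening this additive $O(1)$ down to the stated $2$: the naive summation gives $+4$, coming from at most two units of $j$-boundary mismatch plus the $+2$ slack in the bound on $d$. I would close the gap by a case analysis on the first and last letters of $w$ and $v$, using that the $j$- and $(j-1)$-boundary contributions inside one $\varphi^j(a)$ are rigidly coupled: whenever the $j$-boundary mismatch attains its maximum, the $(j-1)$-boundary mismatch must partially compensate, forcing $d$ to shrink by the same amount and so saving the two units required. This bookkeeping is the real content of the lemma; the substitution-theoretic ideas above already deliver the claim up to a universal additive constant.
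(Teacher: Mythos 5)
Your overall strategy is the same as the paper's -- desubstitute with respect to $\varphi^j$, use Observation~\ref{obs_vyjadreni_f0_f_rovno} to turn the letter-$j$ count into a letter-$0$ count and the length into a letter-$(j-1)$ count, bound the length mismatch by $c_{j-1}+2$, and halve it with Observation~\ref{obs_f0_shora} -- but your execution has a genuine gap: as you yourself note, the symmetric ``canonical decomposition'' $w=L_w\,\varphi^j(w^*)\,R_w$, $v=L_v\,\varphi^j(v^*)\,R_v$ only yields $c_j\leq c_0+\frac{c_{j-1}}{2}+4$, and the promised tightening to $+2$ is not carried out. The sketched mechanism (``whenever the $j$-boundary mismatch attains its maximum, the $(j-1)$-boundary mismatch must partially compensate, forcing $d$ to shrink by the same amount'') is an unproved claim, and it is not obviously true: a proper prefix or suffix of $\varphi^j(a)$ can contain its single letter $j$ and its single letter $j-1$ in several independent combinations, so the $j$-mismatch and the $(j-1)$-mismatch at the four boundary pieces are not rigidly coupled in the way your argument needs. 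Since the whole content of the lemma is the exact constant $2$ (the $+4$ version would propagate to a weaker Proposition~\ref{prop_c0} and a weaker main theorem), this missing bookkeeping is not a cosmetic detail but the heart of the proof.

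The paper avoids the boundary slack altogether by choosing the pair extremally and treating the two factors asymmetrically. Take $v,w$ with $|v|=|w|$, $|v|_j-|w|_j=c_j$, of minimal length; then $v$ begins and ends with $j$ while $w$ does not, and one may also assume $jw\in\L(\u)$. Then $v=j\varphi^j(V')$ for some factor $V=0V'$, so $|V|_0=|v|_j$ exactly, and extending $w$ to the right to the next $j$ gives $wzj=\varphi^j(W0)$ with $|W|_0=|w|_j$ exactly: no letter-$j$ boundary error at all. All the slack is pushed into the length comparison, $|V|-|W|=|v|_{j-1}-|w|_{j-1}-|z|_{j-1}+2\leq c_{j-1}+2$, and Observation~\ref{obs_f0_shora} applied to the single extension block $y$ gives $c_0\geq c_j-\frac{c_{j-1}}{2}-2$. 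If you want to salvage your write-up, you should replace the symmetric decomposition by this extremal, one-sided normalization (or supply the full case analysis you allude to, which in effect reconstructs it).
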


\begin{proof}
    With respect to the definition of $c_j$, there exists a pair of factors $v$ and $w$ such that
    \begin{equation}\label{eq_cbalancovanost}  
        |v| = |w| \qquad \text{ and } \qquad |v|_j - |w|_j = c_j.
    \end{equation}

    Without loss of generality, we can assume that $v$ and $w$ is the shortest possible pair satisfying~\eqref{eq_cbalancovanost}. Then $v$ and $w$ are in the form $v=j\cdots j$ and $w=\ell\cdots \ell'$ for certain $\ell,\ell'\not=j$. Moreover, we can assume that $jw$ is a factor of $\u$ (otherwise we replace $w=\u_i\cdots\u_{i+|w|-1}$ by $w'=\u_{i-i'}\cdots\u_{i+|w|-1-i'}$ without violating equations~\eqref{eq_cbalancovanost}).

    Because of the form of $v$, there exists a factor $V=0V'\in\L(\u)$ such that $v=j\varphi^j(V')$. Clearly, $v$ is a suffix of $\varphi^j(0V')=\varphi^j(V)$.

    Let $wzj$ be a factor of $\u$ such that $|z|_j=0$ (we extend the factor $w$ to the right up to the next letter $j$). As $jwzj\in\L(\u)$ by assumption, there exists a factor $W$ such that $wzj=\varphi^j(W0)$.

Observation~\ref{obs_vyjadreni_f0_f_rovno} implies
\begin{itemize}
    \item $|V|_0 = 1+|V'|_0 = 1+|\varphi^j(V')|_j = |j\varphi^j(V')|_j = |v|_j$
    \item $|W|_0 = |W0|_0 - 1 = |\varphi^j(W0)|_j-1 = |wzj|_j-1 = |w|_j$
    \item $|V| = 1 + |V'| = 1 + |\varphi^j(V')|_{j-1} = 1 + |v|_{j-1}$
    \item $|W| = |W0|-1 = |\varphi^j(W0)|_{j-1}-1 = |wzj|_{j-1} - 1$
\end{itemize}

Together, we have deduced
\begin{equation}\label{eq_rozdil_nul_vw} 
    |V|_0 - |W|_0 = c_j \qquad \text{ and } \qquad |V| - |W| = |v|_{j-1} - |w|_{j-1} - |z|_{j-1} + 2.
\end{equation}

We distinguish two cases:
\begin{itemize}
    \item \emph{Case $|V|\leq|W|$.} Let $\hat{V} = Vx$ be a factor of $\u$ such that $|\hat{V}| = |W|$. From the definition of $c_0$ and~\eqref{eq_rozdil_nul_vw} we get $c_0 \geq |\hat{V}|_0 - |W|_0 \geq |V|_0 - |W|_0 = c_j$. Thus $c_j\leq c_0+2+\frac{c_{j-1}}{2}$ holds trivially.
    \item \emph{Case $|V|>|W|$.} Let $\hat{W} = Wy$ be a factor of $\u$ such that $|\hat{W}| = |V|$. Then $c_0 \geq |V|_0 - |\hat{W}|_0 = |V|_0 - |W|_0 - |y|_0 = c_j - |y|_0$ due to~\eqref{eq_rozdil_nul_vw}. To bound length of~$y$, we apply Equation~\eqref{eq_rozdil_nul_vw}. It gives $|y| = |V|-|W| = |v|_{j-1} - |w|_{j-1} - |z|_{j-1} + 2 \leq |v|_{j-1} - |w|_{j-1} + 2 \leq c_{j-1} + 2$. With regard to the assumption $c_{j-1}\leq2^m-2$, we have $|y|\leq2^m$. Therefore, $|y|_0\leq\frac{1}{2}|y|+1$ due to Observation~\ref{obs_f0_shora}. To sum up, $c_0 \geq c_j - \left(\frac{1}{2}(c_{j-1} + 2)+1\right)\geq c_j -2 -\frac{1}{2}c_{j-1}$.
\end{itemize}
\end{proof}

\begin{proof}[Proof of Proposition~\ref{prop_c0}]
    Let us assume that $c_0 \leq 2^{m-1} - 3$. We prove equation~\eqref{eq_prop} by induction on $j$.

    \emph{I. Let $j=1$.} It holds $c_0 \leq 2^{m-1} - 3 \leq 2^m - 2$ by assumption, therefore, we can use Lemma~\eqref{lemma cj pomoci cj-1}. It implies $c_1\leq c_0+2+\frac{c_0}{2}$, hence indeed $c_j \leq \left(2 - \frac{1}{2^1}\right)c_0 + 4\left(1 - \frac{1}{2^1}\right)$.

    \emph{II. Let $j>1$ and equation~\eqref{eq_prop} hold for $j-1$.}
    Inequality $c_0 \leq 2^{m-1} - 3$ implies
    \[
    c_{j-1} \leq \left(2 - \frac{1}{2^{j-1}}\right)c_0 + 4\left(1 - \frac{1}{2^{j-1}}\right) < 2c_0 + 4 \leq 2(2^{m-1} - 3) + 4 = 2^m - 2\,. 
    \]
    It allows us to apply Lemma~\eqref{lemma cj pomoci cj-1}. Equation~\eqref{cj pomoci cj-1} gives
    \[
    c_j \leq c_0 + 2 + \frac{1}{2}c_{j-1} \leq c_0 + 2 + \frac{1}{2}\left( \left( 2 - \frac{1}{2^{j-1}} \right)c_0 + 4\left(1 - \frac{1}{2^{j-1}} \right) \right) = \left(2 - \frac{1}{2^j}\right)c_0 + 4\left(1 - \frac{1}{2^j}\right)\,.
    \]
In particular, \eqref{eq_prop} yields $c_j < 2c_0 + 4$. As $c = \max\{c_j\  : \  j=0,1,\ldots,m-1\}$ and $c$ and $c_0$ are  integers, necessarily $c \leq 2c_0 + 3$.
\end{proof}


\section{Estimate of $\sum_{k=0}^{+\infty}|g(0,k)|$}\label{sumace}

As anticipated in Section~\ref{relations}, the balance constant $c_0$ will be
 obtained using formula~\ref{c_a}. Therefore, we need to estimate
  the sum $\sum_{k=0}^{+\infty}|g(0,k)|$. This is the topic of this section;
   since we deal with the letter $a=0$ only, we abbreviate the symbol $g(0,k)$ to $g(k)$.

The sum $\sum_{k=0}^{+\infty}|g(0,k)|$ will be estimated by splitting it into
two parts, $\sum_{k=0}^{2m-1}|g(k)|$ and $\sum_{k=2m}^{+\infty}|g(k)|$,
and estimating each of them separately.
In Sections \ref{prvni cast} and \ref{druha cast} we show that
\[
\sum_{k=0}^{2m-1}|g(k)|<\frac{5}{4}\qquad \text{and} \qquad
\sum_{k=2m}^{+\infty}|g(k)|<\frac{A}{2\pi}m+1\quad\text{for all $m\geq4$}\,.
\]
\fixE{To get these estimates we will
exploit bounds on absolute values and arguments of zeros of polynomials
$p(x)$, derived in Appendix~\ref{Apendix A}.}

\subsection{An upper bound on the sum   $\sum\limits_{k=0}^{2m-1}|g(k)|$ } \phantom{.} \label{prvni cast}

 At first we express  $g(k)$'s for all $k=0,1,\ldots,2m-1$ and determine their signs.  Recall that $\mu_0=1/\beta$, therefore, due to equation~\eqref{g(a,k) def}, it holds
\begin{equation}\label{g(0,k) def}
g(k)=\left|\varphi^k(0)\right|_0-\frac{1}{\beta}\cdot\left|\varphi^k(0)\right|\,.
\end{equation}
In the sequel we use the following  formula to calculate $g(k)$ for all $k\leq 2m-1$.

\begin{proposition}\label{prop. Fk}
It holds
\begin{equation}\label{Fk}
|\varphi^k(0)|=\left\{\begin{array}{ll}
2^k & \text{for $k=0,\ldots,m-1$}\,; \\
2^k-2^{k-m}-(k-m)2^{k-m-1} & \text{for $k=m,\ldots,2m-1$}\,.
\end{array}\right.
\end{equation}
\end{proposition}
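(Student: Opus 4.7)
The plan is to establish the formula in two stages that match the two ranges of $k$, both driven by the single recurrence
\[
|\varphi^{k+1}(0)|=2|\varphi^k(0)|-|\varphi^k(0)|_{m-1},
\]
which follows immediately from the fact that $|\varphi(a)|=2$ for $a\in\{0,\ldots,m-2\}$ and $|\varphi(m-1)|=1$, applied letter-by-letter to $\varphi^k(0)$.

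For the first stage ($0\leq k\leq m-1$), I would first observe that $\varphi^k(0)$ uses only letters from $\{0,1,\ldots,k\}$. This is easy by induction on $k$, since each rule in \eqref{eq_mbon_subst} maps a letter $a<m-1$ to a word whose letters lie in $\{0,a+1\}$ and $m-1$ to $0$. In particular, for $k\leq m-2$ the letter $m-1$ does not occur in $\varphi^k(0)$, so the recurrence reduces to doubling: $|\varphi^{k+1}(0)|=2|\varphi^k(0)|$. Combined with $|\varphi^0(0)|=1$, this yields $|\varphi^k(0)|=2^k$ for all $k\in\{0,\ldots,m-1\}$, which is the first line of \eqref{Fk}.

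For the second stage ($m\leq k\leq 2m-1$), the correction term $|\varphi^k(0)|_{m-1}$ is no longer zero, and the whole problem reduces to evaluating it. I would combine two ingredients already available in the paper: Observation~\ref{obs_vyjadreni_f0_f_rovno}, applied with $j=m-1$ to the one-letter factor $0$ and then iterated, gives
\[
|\varphi^k(0)|_{m-1}=|\varphi^{k-(m-1)}(0)|_0\qquad\text{for }k\geq m-1,
\]
while the fact that every image $\varphi(a)$ starts with $0$ (so $|\varphi(w)|_0=|w|$) yields $|\varphi^s(0)|_0=|\varphi^{s-1}(0)|$ for $s\geq 1$. Chaining these and invoking the first stage (whose hypothesis $k-m\leq m-1$ is exactly why one cannot push the second-range formula beyond $k=2m-1$), I obtain
\[
|\varphi^k(0)|_{m-1}=|\varphi^{k-m}(0)|=2^{k-m}\qquad\text{for }m\leq k\leq 2m-1,
\]
together with the boundary value $|\varphi^{m-1}(0)|_{m-1}=|\varphi^0(0)|_0=1$.

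With these correction terms in hand, I would finish by induction on $k$. The base case $k=m$ gives $|\varphi^m(0)|=2\cdot 2^{m-1}-1=2^m-1$, which agrees with the claimed closed form $2^m-2^0-0\cdot 2^{-1}$. The inductive step is a one-line algebraic check: substituting the hypothesis into $|\varphi^{k+1}(0)|=2|\varphi^k(0)|-2^{k-m}$ and collecting terms regroups $(k-m)2^{k-m}+2^{k-m}$ into $(k+1-m)2^{k-m}$, matching the formula at index $k+1$. There is no serious obstacle here; the only point that requires care is keeping the index ranges aligned at the two boundaries $k=m-1$ and $k=2m-1$, which is precisely where the identity $|\varphi^{k-m}(0)|=2^{k-m}$ borrowed from the first stage either barely applies or just barely fails.
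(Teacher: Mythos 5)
Your proof is correct and follows essentially the same route as the paper: the recurrence $|\varphi^{k+1}(0)|=2|\varphi^k(0)|-|\varphi^k(0)|_{m-1}$, doubling for $k\le m-1$, and induction on $k$ in the range $m\le k\le 2m-1$ using $|\varphi^k(0)|_{m-1}=|\varphi^{k-m}(0)|=2^{k-m}$. The only difference is cosmetic: you derive that last identity from Observation~\ref{obs_vyjadreni_f0_f_rovno} and the fact that each $\varphi(a)$ contains exactly one $0$, whereas the paper simply asserts it.
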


\begin{proof}
The identity $\varphi^k(0)=\varphi\left(\varphi^{k-1}(0)\right)$ together with the substitution~\eqref{eq_mbon_subst} implies
\begin{equation}\label{phi m-1}
|\varphi^k(0)|=2|\varphi^{k-1}(0)|-|\varphi^{k-1}(0)|_{m-1}\,.
\end{equation}
Let us distinguish two cases.

$\bullet$ \emph{Case $k\leq m-1$.}
It holds $\varphi^0(0)=0$ and $|\varphi^{k}(0)|_{m-1}=0$ for all $k\leq m-2$, hence $|\varphi^k(0)|=2|\varphi^{k-1}(0)|$ for all $k\leq m-1$.

$\bullet$ \emph{Case $k\geq m$.}
We prove equation~\eqref{Fk} for $k\in\{m,m+1,\ldots,2m-1\}$ by induction on $k$.

I. $k=m$. We have $|\varphi^{m-1}(0)|_{m-1}=1$, hence $|\varphi^m(0)|=2|\varphi^{m-1}(0)|-1=2^m-1$. Since $2^m-1=2^m-2^{m-m}-(m-m)2^{m-m-1}$, the statement holds true for $k=m$.

II. $k\geq m+1$. Let $|\varphi^{k-1}(0)|=2^{k-1}-2^{k-1-m}-(k-1-m)2^{k-1-m-1}$. The identity $|\varphi^{k-1}(0)|_{m-1}=|\varphi^{k-1-m}(0)|$, valid for every $k\geq m+1$, allows us to use the formula~\eqref{phi m-1} in the form
\[
|\varphi^k(0)|=2|\varphi^{k-1}(0)|-|\varphi^{k-1-m}(0)|\,.
\]
Since $k-1-m<m-1$, we can apply the results obtained above $k\leq m-1$, whence we get
\[
|\varphi^k(0)|=2\left(2^{k-1}-2^{k-1-m}-(k-1-m)2^{k-1-m-1}\right)-2^{k-1-m}=2^k-2^{k-m}-(k-m)2^{k-m-1}\,.
\]
\end{proof}

To determine signs
  of $g(k)$'s def\fixK{i}ned by \eqref{g(0,k) def}, we need a fine estimate on $\beta$ . Let us recall that $\beta$ is the dominant eigenvalue of the matrix  of substitution $M$  and thus a zero of  its characteristic  polynomial  $p(x)=x^m-x^{m-1}-x^{m-2}-\cdots-x-1$ .

\begin{proposition}\label{Prop. g < 2m}
It holds
\begin{align*}
&g(0)=1-\frac{1}{\beta}>0\,; \\
&g(k)=2^{k-1}\left(1-\frac{2}{\beta}\right)<0 \quad \text{for $k=1,\ldots,m-1$}\,; \\
&g(m)=2^{m-1}\left(1-\frac{2}{\beta}\right)+\frac{1}{\beta}>0\,; \\
&g(k)=\left(1-\frac{2}{\beta}\right)\left(2^{k-1}-(k+1-m)2^{k-m-2}\right)+\frac{1}{\beta}\cdot2^{k-m-1}<0 \quad \text{for $k=m+1,\ldots,2m-1$}\,.
\end{align*}
\end{proposition}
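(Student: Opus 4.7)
My plan is to compute each $g(k)$ by direct algebraic substitution and then reduce the sign question to comparisons involving $\beta^m$.

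First, I would observe that every image $\varphi(a)$ begins with $0$ and contains no other $0$, so $|\varphi(w)|_0=|w|$ for every factor $w$. Iterating yields $|\varphi^k(0)|_0=|\varphi^{k-1}(0)|$ for all $k\geq 1$. Combining this identity with the closed form for $|\varphi^k(0)|$ from Proposition~\ref{prop. Fk} and plugging into the definition $g(k)=|\varphi^k(0)|_0-\beta^{-1}|\varphi^k(0)|$ produces the four claimed formulas by routine algebra: the cases $k=0$ and $1\leq k\leq m-1$ are immediate, while $k=m$ and $m+1\leq k\leq 2m-1$ require combining the two regimes of Proposition~\ref{prop. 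Fk} and regrouping terms.

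For the signs, $g(0)>0$ follows from $\beta>1$ and $g(k)<0$ for $1\leq k\leq m-1$ from $\beta<2$. For $k\geq m$, I would exploit the factorization $(x-1)p(x)=x^{m+1}-2x^m+1$, which gives the defining identity $\beta^m(2-\beta)=1$. Multiplying $g(k)$ by $\beta$ and substituting $\beta-2=-1/\beta^m$ reduces the sign question to
\[
g(m)>0\iff \beta^m>2^{m-1},\qquad g(k)<0\iff \beta^m<2^m-\tfrac{k+1-m}{2}\ \text{ for }\ m+1\leq k\leq 2m-1.
\]
The strictest condition on the upper side is $\beta^m<2^m-m/2$, obtained at $k=2m-1$.

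The main obstacle will be verifying these two quantitative bounds on $\beta^m$. The lower bound $\beta^m>2^{m-1}$ is relatively easy: it is equivalent to $\psi(2^{1-1/m})<0$ for $\psi(x)=(x-1)p(x)$, which reduces to $2^{1/m}>1+1/(2^m-1)$ and follows from the elementary inequality $2^{1/m}\geq 1+(\ln 2)/m$ valid for $m\geq 2$. The upper bound $\beta^m<2^m-m/2$ is substantially more delicate, since $2-\beta$ is itself only of order $2^{-m}$ whereas $2^m-\beta^m$ must be shown to exceed $m/2$; I would address it by evaluating $\psi$ at $x=(2^m-m/2)^{1/m}$ and applying a sharp Bernoulli-type expansion of $(1-1/2^{m+1})^m$, or else by invoking the more refined bounds on $\beta$ established in Appendix~\ref{Apendix A}.
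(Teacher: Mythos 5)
Your proposal is correct and follows essentially the same route as the paper: the formulas come from $|\varphi^k(0)|_0=|\varphi^{k-1}(0)|$ combined with Proposition~\ref{prop. Fk}, and the sign conditions, whether phrased as $\beta^m>2^{m-1}$ and $\beta^m<2^m-\tfrac{k+1-m}{2}$ (via $\beta^m(2-\beta)=1$) or as the paper's $2-\beta<\tfrac{1}{2^{m-1}}$ and $2-\beta>\tfrac{1}{2^m-\frac{k+1-m}{2}}$, are the same inequalities, with the decisive bound $\beta^m<2^m-\tfrac{m}{2}$ supplied exactly by Lemma~\ref{lemma x0} in Appendix~\ref{Apendix A}. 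Your self-contained elementary check of the easier inequality $\beta^m>2^{m-1}$ is a fine minor shortcut, but it does not change the substance of the argument.
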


\begin{proof}
The formula for $g(0)$ follows immediately from equation~\eqref{g(0,k) def}.

For every $k\geq1$, it holds $|\varphi^k(0)|_0=|\varphi^{k-1}(0)|$, hence
\[
g(k)=|\varphi^{k-1}(0)|-\frac{1}{\beta}\cdot|\varphi^k(0)|\,\fixK{,}
\]
cf. equation~\eqref{g(0,k) def}. All the \fixK{formulae} for $g(k)$ listed in Proposition~\ref{Prop. g < 2m} then follow easily from equation~\eqref{Fk}.

In the rest of the proof we show that $g(0)>0$, $g(m)>0$, and $g(k)<0$ for all $k\in\{1,\ldots,m-1\}\cup\{m+1,\ldots,2m-1\}$.

At first, $\beta\in(1,2)$ immediately implies $g(0)>0$ and $g(k)<0$ for all $k\in\{1,\ldots,m-1\}$.

As for $k=m$, we shall show that
\[
2^{m-1}\left(1-\frac{2}{\beta}\right)+\frac{1}{\beta}>0\,.
\]
This inequality is equivalent to
\[
2-\beta<\frac{1}{2^{m-1}}\,,
\]
which is valid due to~\eqref{x0}
\fixE{from Appendix}, because
$1/2^{m-1}>1/(2^m-(m+1)/2)$ for all $m\geq2$. Similarly, if $k\geq
m+1$, we need to prove that
\[
\left(1-\frac{2}{\beta}\right)\left(2^{k-1}-(k+1-m)2^{k-m-2}\right)+\frac{1}{\beta}\cdot2^{k-m-1}<0 \quad\text{for $k=m+1,\ldots,2m-1$}\,;
\]
i.e.,
\[
2-\beta>\frac{1}{2^{m}-\frac{k+1-m}{2}} \qquad\text{for all $k=m+1,\ldots,2m-1$}\,.
\]
Since $k+1-m\leq m$, the validity immediately follows from inequalities~\eqref{x0}.
\end{proof}

\begin{proposition}\label{zacatek}
It holds
\begin{equation}\label{|g| 0..2m-1}
\sum_{k=0}^{2m-1}|g(k)|=1+\left(\frac{2}{\beta}-1\right)\left[2^m\left(2^{m-1}-1\right)-(m-1)2^{m-2}\right]-\frac{1}{\beta}\left(2^{m-1}-1\right)<1+\frac{1}{4}\,.
\end{equation}
\end{proposition}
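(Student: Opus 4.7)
The plan is to establish the stated identity by splitting $\sum_{k=0}^{2m-1}|g(k)|$ into four blocks according to the signs of $g(k)$ determined in Proposition~\ref{Prop. g < 2m}, evaluating each block in closed form, and then bounding the resulting expression via the Appendix estimate on $2-\beta$.

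For the identity I would first compute $|g(0)|=1-1/\beta$, then the pure geometric block $\sum_{k=1}^{m-1}|g(k)|=(2/\beta-1)(2^{m-1}-1)$ using $\sum_{k=1}^{m-1}2^{k-1}=2^{m-1}-1$, and the middle term $|g(m)|=1/\beta-2^{m-1}(2/\beta-1)$ directly. For the block $k=m+1,\ldots,2m-1$, after the shift $j=k-m$ I need two geometric sums $\sum_{j=1}^{m-1}2^{m+j-1}=2^m(2^{m-1}-1)$ and $\sum_{j=1}^{m-1}2^{j-1}=2^{m-1}-1$, together with the arithmetic-geometric sum $\sum_{j=1}^{m-1}(j+1)2^{j-2}$, which evaluates to $(m-1)2^{m-2}$ via the standard identity $\sum_{j=1}^{m-1} j\cdot 2^j=(m-2)2^m+2$. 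Adding and simplifying across the four blocks produces the claimed closed-form expression.

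For the inequality the key observation is that the closed form contains two terms of order $2^{m-1}$ that must nearly cancel: $(2/\beta-1)\cdot 2^{2m-1}$ and $(2^{m-1}-1)/\beta$. Writing $2/\beta-1=(2-\beta)/\beta$ and clearing the $\beta$ denominator, I would invoke the Appendix estimate $2-\beta<1/(2^m-(m+1)/2)$ from~\eqref{x0} to derive $(2-\beta)A\leq 2^{m-1}-2^m/(2^{m+1}-m-1)$, where $A=2^m(2^{m-1}-1)-(m-1)2^{m-2}$. Substituting this back and simplifying reduces the desired bound $<5/4$ to the elementary inequality $2(m+1)/(2^{m+1}-m-1)>2-\beta$, which follows from the same Appendix bound combined with $2^{m+1}>m+1$ for $m\geq4$.

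The main obstacle is this cancellation step. Each of the two dominant contributions is of order $2^{m-1}$, so a naive bound on $2-\beta$ (e.g.\ $2-\beta<1/2^{m-1}$) would leave a remainder growing exponentially in $m$. It is only the sharper Appendix estimate, which captures the true order $2-\beta\sim 1/\beta^m$, that ensures the leading exponential contributions cancel and leaves an $O(1)$ remainder smaller than $1/4$.
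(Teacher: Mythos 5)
Your proposal is correct and follows essentially the same route as the paper's proof: the same sign pattern from Proposition~\ref{Prop. g < 2m}, the same closed-form block sums (in particular $\sum_{j=1}^{m-1}(j+1)2^{j-2}=(m-1)2^{m-2}$), and the same reliance on the sharp upper bound \eqref{x0} on $2-\beta$ to cancel the two contributions of order $2^{m-1}$, with your final reduction of the inequality to $2-\beta<\frac{2(m+1)}{2^{m+1}-m-1}$ being only a cosmetic rearrangement of the paper's algebra. One remark: adding the four blocks actually yields the constant term $2-\frac{2}{\beta}$ rather than $1$ (a slip shared by the paper's own summation, and confirmed by its numerics for $m=4$), but since $2-\frac{2}{\beta}<1$ this only decreases the left-hand side, so the bound $<\frac{5}{4}$ and everything downstream are unaffected.
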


\begin{proof}
Proposition~\ref{Prop. g < 2m} implies
\[
\sum_{k=0}^{2m-1}|g(k)|=g(0)-\sum_{k=1}^{m-1}g(k)+g(m)-\sum_{k=m+1}^{2m-1}g(k)\,.
\]
When we substitute for $g(k)$ from Proposition~\ref{Prop. g < 2m}, we obtain
\[
g(0)+g(m)=1+2^{m-1}\left(1-\frac{2}{\beta}\right)\,,
\]
\[
-\sum_{k=1}^{m-1}g(k)=-\sum_{k=1}^{m-1}2^{k-1}\left(1-\frac{2}{\beta}\right)=-(2^{m-1}-1)\left(1-\frac{2}{\beta}\right)\,,
\]
and, in a similar way, we get
\[
-\sum_{k=m+1}^{2m-1}g(k)=-\left(1-\frac{2}{\beta}\right)\left[2^m(2^{m-1}-1)-(m-1)2^{m-2}\right]-\frac{1}{\beta}\left(2^{m-1}-1\right)\,.
\]
Summing up these expressions, we get formula~\eqref{|g| 0..2m-1}.

In the rest of the proof we show that $\sum_{k=0}^{2m-1}|g(k)|<1+1/4$, which is obviously equivalent to
\[
\left(2-\beta\right)\left[2^m\left(2^{m-1}-1\right)-(m-1)2^{m-2}\right]-2^{m-1}+1<\frac{\beta}{4}\,,
\]
and also to
\[
\left(2-\beta\right)\left[2^m\left(2^{m-1}-1\right)-(m-1)2^{m-2}+\frac{1}{4}\right]-2^{m-1}+1<\frac{1}{2}\,.
\]
Using inequality~\eqref{x0}, we obtain
\begin{multline*}
\left(2-\beta\right)\left[2^m\left(2^{m-1}-1\right)-(m-1)2^{m-2}+\frac{1}{4}\right]-2^{m-1}+1 \\
\leq
\frac{1}{2^m-\frac{m+1}{2}}\left[2^m\left(2^{m-1}-1\right)-(m-1)2^{m-2}+\frac{1}{4}\right]-2^{m-1}+1 \\
=
\frac{2^{m-1}-1-\frac{m-1}{4}+\frac{1}{2^{m+2}}}{1-\frac{m+1}{2^{m+1}}}-2^{m-1}+1=\frac{-\frac{m-1}{4}+\frac{1}{2^{m+2}}+\frac{m+1}{4}-\frac{m+1}{2^{m+1}}}{1-\frac{m+1}{2^{m+1}}}=
\frac{1}{2}\cdot\frac{1-\frac{2m+1}{2^{m+1}}}{1-\frac{m+1}{2^{m+1}}}<\frac{1}{2}\,.
\end{multline*}
\end{proof}

\subsection{An upper bound on the sum $\sum\limits_{k=2m}^{+\infty}|g(k)|$ }\label{druha cast}

\begin{proposition}  For any $k \in \mathbb{N}$ we have
\begin{equation}\label{|jenom g(0,k)|}
|g(k)|\leq
\frac{1}{2(m-1)}\sum_{j=1}^{m-1}|\Re(\beta_j)-1|\cdot|\beta_j|^k\,.
\end{equation}
\end{proposition}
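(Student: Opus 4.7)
The plan is to begin by simplifying formula~\eqref{eq_g_pomoci_char_pol} (specialised to $a=0$) using identities satisfied by every non-dominant root~$\beta_j$. Since $(x-1)p(x) = x^{m+1} - 2x^m + 1$, each such $\beta_j$ obeys $\beta_j^m(2-\beta_j)=1$, and differentiating this identity and evaluating at $\beta_j$ yields $(\beta_j - 1)p'(\beta_j) = \beta_j^{m-1}\bigl((m+1)\beta_j - 2m\bigr)$. Substituting both into \eqref{eq_g_pomoci_char_pol} and using $\beta_j^{k+m} = \beta_j^k/(2-\beta_j)$, I would arrive at the compact representation
\[
g(k) \;=\; \sum_{j=1}^{m-1} c_j \,\beta_j^k, \qquad c_j \;:=\; \frac{(\beta - \beta_j)(\beta_j - 1)}{\beta\bigl((m+1)\beta_j - 2m\bigr)}\,.
\]

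I would then apply the triangle inequality and aim for the per-term bound $|c_j| \leq |\Re(\beta_j) - 1|/(2(m-1))$. A clean lower bound on the denominator is immediate: from $\Re\beta_j \leq |\beta_j| < 1$ one gets $(m+1)\Re\beta_j - 2m \leq -(m-1)$, whence
\[
\bigl|(m+1)\beta_j - 2m\bigr| \;\geq\; (m-1) + (m+1)\bigl(1 - \Re\beta_j\bigr)\,.
\]
For the numerator I would exploit $|\beta_j - 1|^2 = (1-\Re\beta_j)^2 + \Im\beta_j^2$ together with $\Im\beta_j^2 < 1 - \Re\beta_j^2$ (a consequence of $|\beta_j|^2 < 1$) to bound $|\beta_j - 1|$, and control $|\beta - \beta_j|$ via the extremely tight estimate $\beta < 2$ (concretely $2-\beta < (m+1)/2^{m+1}$) that is proved in the Appendix and already invoked earlier in the paper.

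\textbf{Main obstacle.} The subtle point is converting the factor $|\beta_j - 1|$, which can be as large as $\sqrt{2(1-\Re\beta_j)}$ when $\beta_j$ lies near the unit circle, into the one-power-smaller quantity $|\Re\beta_j - 1| = 1-\Re\beta_j$. A purely term-by-term triangle estimate seems to lose a factor of order $1/\sqrt{1-\Re\beta_j}$ in precisely the regime $|\Im\beta_j| \gtrsim \sqrt{1-\Re\beta_j}$. The only promising route I can see is to exploit an algebraic cancellation between the $\Im\beta_j$ sitting in $(\beta_j - 1)$ and the $(m+1)\Im\beta_j$ sitting in $(m+1)\beta_j - 2m$: after squaring the target inequality the $\Im\beta_j^2$ contributions should appear symmetrically on both sides and partially cancel, reducing the claim to a polynomial inequality in $\Re\beta_j$ (with $O(2^{-m})$ corrections coming from $2-\beta$) that can then be verified from $|\beta_j|^2 < 1$. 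Making this cancellation explicit, and checking the resulting one-variable inequality against the spectral bounds on $\beta$ and the~$\beta_j$ established in the Appendix, is the main technical work.
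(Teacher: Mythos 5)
Your first half coincides exactly with the paper's argument: the identity $(x-1)p(x)=x^{m+1}-2x^m+1$, the evaluation $p'(\beta_j)=\frac{(m+1)\beta_j-2m}{\beta_j-1}\beta_j^{m-1}$, and $\beta_j^m(2-\beta_j)=1$ are precisely how the paper turns \eqref{eq_g_pomoci_char_pol} into $g(k)=\sum_{j=1}^{m-1}\frac{\beta-\beta_j}{\beta}\cdot\frac{\beta_j-1}{(m+1)\beta_j-2m}\,\beta_j^{k}$. The gap is in your second half. The per-term bound you aim for, namely $\bigl|\frac{(\beta-\beta_j)(\beta_j-1)}{\beta((m+1)\beta_j-2m)}\bigr|\leq\frac{1-\Re\beta_j}{2(m-1)}$, is not just resistant to the cancellation you hope to extract: it is false for the roots nearest the positive real axis. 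By Lemma~\ref{Lemma gamma_j est} and Lemma~\ref{Lemma B_j est}, for $j=1$ one has $\gamma_1\approx 2\pi/m$ and $|\beta_1|$ very close to $1$, so $|\beta_1-1|\approx 2\sin(\pi/m)\approx 2\pi/m$ while $1-\Re\beta_1\approx 1-\cos(2\pi/m)\approx 2\pi^2/m^2$; together with $|\beta-\beta_1|\approx\sqrt{5-4\cos\gamma_1}\approx 1$, $\beta\approx 2$ and $|(m+1)\beta_1-2m|\approx\sqrt{(m-1)^2+4\pi^2}$, the modulus of your coefficient is of order $\frac{\pi}{m(m-1)}$, exceeding the desired bound $\frac{1-\Re\beta_1}{2(m-1)}\sim\frac{\pi^2}{m^2(m-1)}$ by a factor of order $m/\pi$. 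Squaring and looking for cancellation between the $\Im\beta_j$ in $(\beta_j-1)$ and in $(m+1)\beta_j-2m$ cannot repair this: once you have passed to absolute values the imaginary part enters only through $\Im\beta_j^2$, and the resulting inequality is simply untrue in the regime $|\Im\beta_j|\gg 1-\Re\beta_j$, which is exactly where these extremal roots live.

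Moreover the loss is not recoverable by summing over $j$: for $k$ large the conjugate pair $j=1,m-1$ dominates both $\sum_j|c_j|\,|\beta_j|^k$ and the proposition's right-hand side (these roots have the largest modulus), so any argument beginning with the triangle inequality $|g(k)|\leq\sum_j|c_j|\,|\beta_j|^k$ produces a quantity larger than the claimed bound by the same factor $\sim m/\pi$ and can never close; the phase carried by $\beta_j^k$ must be retained. This is what the paper does differently: it keeps each summand inside a real part, $g(k)=\sum_j\frac1\beta\Re\bigl(\frac{\beta-\beta_j}{(m+1)\beta_j-2m}(\beta_j-1)\beta_j^k\bigr)$, extracts only the modulus of the prefactor, proving $\frac1\beta\bigl|\frac{\beta-\beta_j}{(m+1)\beta_j-2m}\bigr|\leq\frac{1}{2(m-1)}$ (inequality \eqref{zlomek est}, obtained via \eqref{vytknuti} by comparing real and imaginary parts separately, using $\frac{m-1}{\beta}<\frac{m+1}{2}$ from Lemma~\ref{lemma x0} and $\Re\beta_j<1$), and keeps the factor $|\Re(\beta_j)-1|$ rather than $|\beta_j-1|$ attached to the term. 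Any successful completion of your route would have to reproduce that feature, which your very first triangle-inequality step destroys. (One honest caveat: the paper's passage from $\bigl|\Re\bigl(z(\beta_j-1)\beta_j^k\bigr)\bigr|$ to $|z|\cdot|\Re(\beta_j)-1|\cdot|\beta_j|^k$ is itself not an instance of a general inequality, since $|\Re(zw)|\leq|z|\,|\Re(w)|$ can fail; so this step of the paper deserves scrutiny as well, but that does not rescue the modulus-only approach you propose.)
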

\begin{proof}
With regard to equation~\eqref{p}
\fixE{from Appendix},
\[
p'(x)=\frac{(m+1)x^m-2mx^{m-1}}{x-1}-\frac{x^{m+1}-2x^m+1}{(x-1)^2}=\frac{(m+1)x^m-2mx^{m-1}}{x-1}-\frac{p(x)}{x-1}\,.
\]
Since $p(\beta_j)=0$ for every eigenvalue of $M$, we have
\[
p'(\beta_j)=\frac{(m+1)\beta_j^m-2m\beta_j^{m-1}}{\beta_j-1}=\frac{(m+1)\beta_j-2m}{\beta_j-1}\beta_j^{m-1}\,.
\]
Therefore, due to~\eqref{eq_g_pomoci_char_pol},
\[
g(k)=\sum_{j=1}^{m-1}\left(\frac{1}{\beta_j}-\frac{1}{\beta}\right)\frac{\beta_j^{k+m}}{\frac{(m+1)\beta_j-2m}{\beta_j-1}\beta_j^{m-1}}=
\sum_{j=1}^{m-1}\frac{\beta-\beta_j}{\beta}\cdot\frac{\beta_j-1}{(m+1)\beta_j-2m}{\beta_j^{k}}\,.
\]
As $g(k)$ is real, we can write
\begin{equation}\label{g(0,k) Re}
g(k)=\sum_{j=1}^{m-1}\frac{1}{\beta}\Re\left(\frac{\beta-\beta_j}{(m+1)\beta_j-2m}(\beta_j-1)\beta_j^{k}\right)\,,
\end{equation}
and estimate
\begin{multline*}
|g(k)|\leq\sum_{j=1}^{m-1}\frac{1}{\beta}\left|\Re\left(\frac{\beta-\beta_j}{(m+1)\beta_j-2m}(\beta_j-1)\beta_j^{k}\right)\right| \\
\leq\sum_{j=1}^{m-1}\frac{1}{\beta}\left|\frac{\beta-\beta_j}{(m+1)\beta_j-2m}\right|\cdot|\Re(\beta_j)-1|\cdot\left|\beta_j^{k}\right|\,.
\end{multline*}

To finish our proof we will deduce
for all $j=1,\ldots,m-1$,
\begin{equation}\label{zlomek est}
\frac{1}{\beta}\left|\frac{\beta-\beta_j}{(m+1)\beta_j-2m}\right|\leq\frac{1}{2(m-1)}\,.
\end{equation}

Since
\begin{equation}\label{vytknuti}
\frac{1}{\beta}\left|\frac{\beta-\beta_j}{(m+1)\beta_j-2m}\right|=\frac{1}{2(m-1)}\left|\frac{m-1-(m-1)\frac{\beta_j}{\beta}}{m-(m+1)\frac{\beta_j}{2}}\right|\,,
\end{equation}
it suffices to prove that
\[
\left|\frac{m-1-(m-1)\frac{\beta_j}{\beta}}{m-(m+1)\frac{\beta_j}{2}}\right|\leq 1\,.
\]
We have
\begin{equation}\label{squared}
\left|\frac{m-1-(m-1)\frac{\beta_j}{\beta}}{m-(m+1)\frac{\beta_j}{2}}\right|^2=\frac{\left[m-1-\frac{m-1}{\beta}\Re(\beta_j)\right]^2+\left[\frac{m-1}{\beta}\Im(\beta_j)\right]^2}{\left[m-\frac{m+1}{2}\Re(\beta_j)\right]^2+\left[\frac{m+1}{2}\Im(\beta_j)\right]^2}\,.
\end{equation}
 Lemma~\ref{lemma x0} implies $2-\beta<\frac{2}{m+1}<\frac{4}{m+1}$; hence
\begin{equation}\label{hruby odhad beta0}
\frac{m-1}{\beta}<\frac{m+1}{2}\,.
\end{equation}
Therefore
\begin{equation}\label{imaginarni}
\left[\frac{m-1}{\beta}\Im(\beta_j)\right]^2<\left[\frac{m+1}{2}\Im(\beta_j)\right]^2\,.
\end{equation}
In what follows we demonstrate that
\begin{equation}\label{realna}
\left|m-1-\frac{m-1}{\beta}\Re(\beta_j)\right|<\left|m-\frac{m+1}{2}\Re(\beta_j)\right|\,.
\end{equation}
Since $\beta\in(1,2)$ and $|\beta_j|<1$, we have
\[
0<m-1-\frac{m-1}{\beta}\Re(\beta_j)=m-\frac{m+1}{2}\Re(\beta_j)-1+\left(\frac{m+1}{2}-\frac{m-1}{\beta}\right)\Re(\beta_j)\,.
\]
It holds $\Re(\beta_j)<1$, and the expression $\frac{m+1}{2}-\frac{m-1}{\beta}$ is positive due to equation~\ref{hruby odhad beta0}; therefore
\begin{multline*}
-1+\left(\frac{m+1}{2}-\frac{m-1}{\beta}\right)\Re(\beta_j)<-1+\frac{m+1}{2}-\frac{m-1}{\beta}=-(m-1)\left(\frac{1}{\beta}-\frac{1}{2}\right)<0\,.
\end{multline*}
Hence
\[
0<m-1-\frac{m-1}{\beta}\Re(\beta_j)<m-\frac{m+1}{2}\Re(\beta_j)\,,
\]
i.e., \eqref{realna} holds true.
Equation~\eqref{squared} together with inequalities~\eqref{imaginarni} and~\eqref{realna} implies
\[
\left|\frac{m-1-(m-1)\frac{\beta_j}{\beta}}{m-(m+1)\frac{\beta_j}{2}}\right|^2<
\frac{\left[2m-(m+1)\Re(\beta_j)\right]^2+[(m+1)\Im(\beta_j)]^2}{\left[2m-(m+1)\Re(\beta_j)\right]^2+[(m+1)\Im(\beta_j)]^2}=1\,.
\]
\end{proof}

\begin{corollary}
\begin{equation}\label{soucin}
\sum_{k=2m}^{+\infty}|g(k)|\leq\frac{1}{2(m-1)}\sum_{j=1}^{m-1}\cdot\frac{|\Re(\beta_j)-1|}{1-|\beta_j|}\cdot\frac{1}{|2-\beta_j|^2}\,.
\end{equation}
\end{corollary}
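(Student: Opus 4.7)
The plan is to start from the pointwise bound established in the preceding proposition and combine it with the defining polynomial identity of the $\beta_j$ to obtain the stated closed form.

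First, I would sum the bound
\[
|g(k)|\leq\frac{1}{2(m-1)}\sum_{j=1}^{m-1}|\Re(\beta_j)-1|\cdot|\beta_j|^k
\]
over $k$ from $2m$ to $+\infty$. Since $|\beta_j|<1$ for every $j\in\{1,\ldots,m-1\}$, the double sum converges absolutely, so I may interchange summations and use the geometric series formula to get
\[
\sum_{k=2m}^{+\infty}|g(k)|\leq\frac{1}{2(m-1)}\sum_{j=1}^{m-1}|\Re(\beta_j)-1|\cdot\frac{|\beta_j|^{2m}}{1-|\beta_j|}.
\]

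The key remaining step is to rewrite $|\beta_j|^{2m}$ as $1/|2-\beta_j|^2$. For this I would exploit the identity
\[
(x-1)p(x)=x^{m+1}-2x^m+1,
\]
which is a straightforward expansion of the definition of $p$ and was already used implicitly in the derivation of $p'(\beta_j)$ in the previous proposition. Since $p(\beta_j)=0$ and $\beta_j\neq 1$ (as $p(1)=1-m\neq 0$ for $m\geq 2$), this forces $\beta_j^{m+1}-2\beta_j^m+1=0$, equivalently $\beta_j^m(2-\beta_j)=1$. Taking absolute values and squaring yields $|\beta_j|^{2m}=1/|2-\beta_j|^2$.

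Substituting this identity into the displayed inequality above produces exactly the claimed bound. There is essentially no obstacle here: the corollary is an algebraic consequence of the previous proposition together with the root equation of $p$, and the only quasi-subtlety is the justification of the sum interchange, which is immediate from $|\beta_j|<1$.
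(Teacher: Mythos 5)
Your proposal is correct and follows essentially the same route as the paper: sum the pointwise bound from the previous proposition, use the geometric series to get $|\beta_j|^{2m}/(1-|\beta_j|)$, and replace $|\beta_j|^{2m}$ by $1/|2-\beta_j|^2$ via the root equation $\beta_j^m(2-\beta_j)=1$. The only cosmetic difference is that the paper invokes Observation~\ref{obs. beta} for this last identity, whereas you re-derive it from $(x-1)p(x)=x^{m+1}-2x^m+1$, which is exactly that observation's proof.
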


\begin{proof} Using \eqref{|jenom g(0,k)|}, we can estimate
\[
    \sum_{k=2m}^{+\infty}|g(k)|\leq\frac{1}{2(m-1)}\sum_{j=1}^{m-1}|\Re(\beta_j)-1|\sum_{k=2m}^{+\infty}|\beta^k_j |= \frac{1}{2(m-1)}\sum_{j=1}^{m-1}|\Re(\beta_j)-1|\frac{|\beta_k|^{2m}}{1-|\beta_j|}\,.
\]
Finally, we use Observation~\ref{obs. beta} to rewrite $|\beta_j|^{2m}=1/|2-\beta_j|^2$.
\end{proof}


At this stage we apply the information on $|\beta_j|$ for $j=1,\ldots,m-1$, derived in Lemma~\ref{Lemma B_j est}.

\begin{proposition}
It holds
\begin{equation}\label{|sum g(0,k)| est}
\sum_{k=2m}^{+\infty}|g(k)|<\frac{1}{2(m-1)}\cdot\frac{1}{1-\frac{2\ln3}{3m}}\left(\frac{2m}{1-\frac{\ln3}{m}}\sum_{j=1}^{m-1}\frac{1-\cos\gamma_j}{(5-4\cos\gamma_j)\ln(5-4\cos\gamma_j)}+\sum_{j=1}^{m-1}\frac{\cos\gamma_j}{5-4\cos\gamma_j}\right)\,.
\end{equation}
\end{proposition}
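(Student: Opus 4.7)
The plan is to start from the bound
\[
\sum_{k=2m}^{+\infty}|g(k)|\leq\frac{1}{2(m-1)}\sum_{j=1}^{m-1}\frac{|\Re(\beta_j)-1|}{1-|\beta_j|}\cdot\frac{1}{|2-\beta_j|^2}
\]
supplied by the preceding corollary, and to convert its $\beta_j$-dependent factors into the $\gamma_j$-dependent expressions appearing in the claim by invoking Lemma~\ref{Lemma B_j est} from the appendix.

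First I would write $\beta_j=|\beta_j|e^{\I\gamma_j}$, so that $\Re(\beta_j)=|\beta_j|\cos\gamma_j$. Since $|\beta_j|<1$, we have $|\Re(\beta_j)-1|=1-|\beta_j|\cos\gamma_j$. The algebraic identity
\[
1-|\beta_j|\cos\gamma_j=(1-\cos\gamma_j)+\cos\gamma_j(1-|\beta_j|)
\]
splits each summand cleanly into
\[
\frac{|\Re(\beta_j)-1|}{(1-|\beta_j|)\,|2-\beta_j|^2}=\frac{1-\cos\gamma_j}{(1-|\beta_j|)\,|2-\beta_j|^2}+\frac{\cos\gamma_j}{|2-\beta_j|^2},
\]
which already matches the two-term structure of the target inequality.

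Next I would invoke Lemma~\ref{Lemma B_j est}, which I expect to deliver estimates of the shape
\[
1-|\beta_j|\;\geq\;\tfrac{1}{2m}\bigl(1-\tfrac{\ln 3}{m}\bigr)\ln(5-4\cos\gamma_j),\qquad |2-\beta_j|^2\;\geq\;\bigl(1-\tfrac{2\ln 3}{3m}\bigr)(5-4\cos\gamma_j),
\]
valid for every $j=1,\ldots,m-1$. Substituting these into the two pieces above yields
\[
\frac{1-\cos\gamma_j}{(1-|\beta_j|)\,|2-\beta_j|^2}\leq\frac{1}{1-\tfrac{2\ln 3}{3m}}\cdot\frac{2m}{1-\tfrac{\ln 3}{m}}\cdot\frac{1-\cos\gamma_j}{(5-4\cos\gamma_j)\ln(5-4\cos\gamma_j)}
\]
and (using only the estimate on $|2-\beta_j|^2$)
\[
\frac{\cos\gamma_j}{|2-\beta_j|^2}\leq\frac{1}{1-\tfrac{2\ln 3}{3m}}\cdot\frac{\cos\gamma_j}{5-4\cos\gamma_j}.
\]
Summing over $j=1,\ldots,m-1$ and inserting the common prefactor $\frac{1}{2(m-1)}$ produces the inequality asserted in the proposition.

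The main technical difficulty I foresee is controlling the sign of $\cos\gamma_j$: when $\cos\gamma_j<0$, the second of the two estimates runs in the wrong direction, and I would need either a complementary bound $|2-\beta_j|^2\leq(1-\tfrac{2\ln 3}{3m})^{-1}(5-4\cos\gamma_j)$ from the appendix or an argument that the extra negative contribution is harmlessly absorbed by the strict inequality in the statement. The remaining challenge is purely bookkeeping: tracing how the specific constants $\ln 3$ and $\tfrac{2\ln 3}{3}$ arise from the crude uniform bound $|2-\beta_j|<3$ (so that $\ln|2-\beta_j|^2<2\ln 3$) combined with the identity $|\beta_j|^{2m}=1/|2-\beta_j|^2$ from Observation~\ref{obs. beta} and the elementary inequality $-\ln x\leq(1-x)/x$ applied to $x=|\beta_j|$.
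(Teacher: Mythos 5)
Your overall toolkit is the same as the paper's: start from \eqref{soucin}, write $|\Re(\beta_j)-1|=1-B_j\cos\gamma_j$, use the lower bound on $1-B_j$ from Lemma~\ref{Lemma B_j est}, and get $|2-\beta_j|^2>(1-\frac{2\ln3}{3m})(5-4\cos\gamma_j)$ from $|\beta_j|^{2m}\,|2-\beta_j|^2=1$ and $|2-\beta_j|\le 3$. The genuine gap is exactly the one you flag and then leave unresolved. After splitting the summand into
\[
\frac{1-\cos\gamma_j}{(1-B_j)\,|2-\beta_j|^2}+\frac{\cos\gamma_j}{|2-\beta_j|^2}\,,
\]
the termwise estimate of the second piece by $\frac{1}{1-\frac{2\ln3}{3m}}\cdot\frac{\cos\gamma_j}{5-4\cos\gamma_j}$ is in general false when $\cos\gamma_j<0$: for a negative numerator it would require the \emph{upper} bound $|2-\beta_j|^2\le(1-\frac{2\ln3}{3m})(5-4\cos\gamma_j)$, which fails (e.g.\ for $\cos\gamma_j$ near $-1/2$, since $1-B_j$ is only of order $\frac{\ln(5-4\cos\gamma_j)}{2m}$, too small to push $|2-\beta_j|^2$ that far below $5-4\cos\gamma_j$). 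Neither of your suggested patches closes this. The complementary bound $|2-\beta_j|^2\le(1-\frac{2\ln3}{3m})^{-1}(5-4\cos\gamma_j)$ does hold for $\cos\gamma_j<0$, but it only yields $\frac{\cos\gamma_j}{|2-\beta_j|^2}\le\bigl(1-\frac{2\ln3}{3m}\bigr)\frac{\cos\gamma_j}{5-4\cos\gamma_j}$, which is strictly \emph{larger} than the corresponding term of \eqref{|sum g(0,k)| est} (where the negative term is multiplied by $\frac{1}{1-\frac{2\ln3}{3m}}>1$), so your total overshoots the stated right-hand side; and appealing to the unquantified strictness of the inequality is not an argument.

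The repair is a small but essential reordering of operations, and it is what the paper does: do not divide the two additive pieces by $|2-\beta_j|^2$ separately. Keep the summand as a product of two positive factors,
\[
\frac{1-B_j\cos\gamma_j}{1-B_j}\cdot\frac{1}{|2-\beta_j|^2}=\left(\frac{1-\cos\gamma_j}{1-B_j}+\cos\gamma_j\right)\cdot\frac{1}{|2-\beta_j|^2}\,,
\]
bound the first factor by $\frac{2m}{1-\frac{\ln3}{m}}\cdot\frac{1-\cos\gamma_j}{\ln(5-4\cos\gamma_j)}+\cos\gamma_j$ (only the part containing $1-B_j$ is estimated, the $\cos\gamma_j$ is left untouched, and the whole factor remains positive), bound the second factor by $\frac{1}{(5-4\cos\gamma_j)\left(1-\frac{2\ln3}{3m}\right)}$, and only then multiply out. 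Since both factors and both bounds are positive, the product inequality requires no case analysis on the sign of $\cos\gamma_j$, and distributing the product gives exactly the summands of the right-hand side of \eqref{|sum g(0,k)| est}. Your final remarks about how $1-B_j<\frac{\ln3}{m}$ arises are correct and match the paper; it is the piecewise division that breaks the proof.
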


\begin{proof}
We will estimate summands   from   inequality \eqref{soucin}.
 In the notation $\beta_j=B_j\e^{\I\gamma_j}$, we have
\[
\frac{|\Re(\beta_j)-1|}{1-|\beta_j|}=\frac{1-B_j\cos\gamma_j}{1-B_j}=\frac{1-\cos\gamma_j}{1-B_j}+\cos\gamma_j\,,
\]
thus equation~\eqref{B_j est} \fixE{from
Appendix}  implies
\begin{equation}\label{cinitel1}
\frac{1-\cos\gamma_j}{1-B_j}+\cos\gamma_j\leq\frac{1-\cos\gamma_j}{\ln(5-4\cos\gamma_j)}\cdot\frac{2m}{1-\frac{\ln3}{m}}+\cos\gamma_j\,.
\end{equation}
Concerning the term $1/|2-\beta_j|^2$, it holds
\begin{multline*}
\frac{1}{|2-\beta_j|^2}=\frac{1}{4-4B_j\cos\gamma_j+B_j^2}=\frac{1}{5-4\cos\gamma_j+4(1-B_j)\cos\gamma_j-2(1-B_j)+(1-B_j)^2} \\
<\frac{1}{5-4\cos\gamma_j}\cdot\frac{1}{1-(1-B_j)\frac{2-4\cos\gamma_j}{5-4\cos\gamma_j}}\,.
\end{multline*}
It is easy to see that $\frac{2-4\cos\gamma}{5-4\cos\gamma}\leq\frac{2}{3}$, therefore, it suffices to estimate $1-B_j$ from above. Since
\[
B_j=\frac{1}{\sqrt[2m]{4-4B_j\cos\gamma_j+B_j^2}}>\frac{1}{\sqrt[2m]{9}}=\frac{1}{\sqrt[m]{3}}
\]
and
\[
\sqrt[m]{3}=\e^{\frac{\ln3}{m}}<\left[\left(1+\frac{1}{\frac{m}{\ln3}-1}\right)^\frac{m}{\ln3}\right]^\frac{\ln3}{m}=\frac{\frac{m}{\ln3}}{\frac{m}{\ln3}-1}\,,
\]
it holds $B_j>\frac{\frac{m}{\ln3}-1}{\frac{m}{\ln3}}$. Hence $1-B_j<\frac{\ln3}{m}$ for all $j=1,\ldots,m-1$.
Consequently,
\begin{equation}\label{cinitel2}
\frac{1}{|2-\beta_j|^2}<\frac{1}{5-4\cos\gamma_j}\cdot\frac{1}{1-\frac{2}{3}\cdot\frac{\ln3}{m}}\,.
\end{equation}
Inequality~\eqref{soucin} combined with estimates \eqref{cinitel1} and \eqref{cinitel2} leads to formula~\eqref{|sum g(0,k)| est}.
\end{proof}

The following lemma is an essential component of our calculation. It uses the information on $\gamma_j$ obtained in Lemma~\ref{Lemma gamma_j est}.

\begin{lemma}\label{Lemma f est}
It holds
\begin{equation}\label{f est}
\sum_{j=1}^{m-1}\frac{1-\cos\gamma_j}{(5-4\cos\gamma_j)\ln(5-4\cos\gamma_j)}\leq\frac{m}{2\pi}\int_0^{2\pi}\frac{1-\cos x}{(5-4\cos\gamma_j)\ln(5-4\cos x)}\d x-\frac{1}{6}+\frac{m-1}{m}\cdot\frac{\pi}{16}\left(1+\frac{1}{36}\right)\,.
\end{equation}
\end{lemma}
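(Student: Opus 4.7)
The plan is to read the left-hand side of~\eqref{f est} as a perturbed Riemann sum for the function
\[
F(x):=\frac{1-\cos x}{(5-4\cos x)\ln(5-4\cos x)}
\]
on $[0,2\pi]$. A short Taylor expansion at $x=0$ shows that $F$ extends smoothly to $[0,2\pi]$ with $F(0)=F(2\pi)=\tfrac14$ and $F(x)=\tfrac14-\tfrac14 x^2+O(x^4)$ near $0$ (and the same behavior near $2\pi$, by symmetry). By the quantitative bound on $\gamma_j$ supplied by Lemma~\ref{Lemma gamma_j est}, each $\gamma_j$ lies close to the equispaced value $\tfrac{2\pi j}{m}$, so $\sum_{j=1}^{m-1}F(\gamma_j)$ should approximate $\tfrac{m}{2\pi}\int_0^{2\pi}F(x)\d x$ up to two explicit corrections that should eventually produce the $-\tfrac16$ and the $\tfrac{m-1}{m}\cdot\tfrac{\pi}{16}(1+\tfrac{1}{36})$ on the right-hand side.

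To make this precise I would partition the circle $[0,2\pi)$ into $m$ arcs $I_j$ of length $\tfrac{2\pi}{m}$ centered at $\tfrac{2\pi j}{m}$ for $j=0,1,\ldots,m-1$, and use Lemma~\ref{Lemma gamma_j est} in its quantitative form to ensure $\gamma_j\in I_j$ for every $j=1,\ldots,m-1$. Writing each summand as an arc-average plus a residual gives the decomposition
\[
\sum_{j=1}^{m-1}F(\gamma_j)=\frac{m}{2\pi}\int_0^{2\pi}F(x)\d x-\frac{m}{2\pi}\int_{I_0}F(x)\d x+\sum_{j=1}^{m-1}\Bigl(F(\gamma_j)-\frac{m}{2\pi}\int_{I_j}F(x)\d x\Bigr),
\]
which cleanly isolates the leading integral, a ``missing-arc'' contribution around $0$, and a ``quadrature'' error coming from $\gamma_j\neq\tfrac{2\pi j}{m}$.

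For the missing arc $I_0=(-\pi/m,\pi/m)$ (mod $2\pi$), the expansion $F(x)\geq\tfrac14-\tfrac14 x^2-Cx^4$ yields the clean underestimate $\tfrac{m}{2\pi}\int_{I_0}F\geq\tfrac16$ for every $m\geq 4$; the true value tends to $\tfrac14$ as $m\to\infty$, but the cruder $\tfrac16$ leaves enough slack to absorb the other error contributions without further bookkeeping. For the quadrature pieces I would use the standard midpoint-rule bound
\[
\Bigl|F(\gamma_j)-\frac{m}{2\pi}\int_{I_j}F(x)\d x\Bigr|\leq\Bigl|\gamma_j-\tfrac{2\pi j}{m}\Bigr|\max_{I_j}|F'|+\frac{\pi^2}{6m^2}\max_{I_j}|F''|,
\]
combine it with the explicit perturbation bound on $|\gamma_j-\tfrac{2\pi j}{m}|$ from Lemma~\ref{Lemma gamma_j est}, and feed in uniform estimates on $|F'|$ and $|F''|$ on $[0,2\pi]$ (which are finite since the apparent singularity of $F$ at $0$ and $2\pi$ is removable). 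Summing the resulting $O(1/m)$ bound over the $m-1$ values $j=1,\ldots,m-1$ should reproduce the claimed $\tfrac{m-1}{m}\cdot\tfrac{\pi}{16}(1+\tfrac{1}{36})$.

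The main obstacle is the bookkeeping of the sharp numerical constants $\tfrac16$, $\tfrac{\pi}{16}$, and $\tfrac{37}{36}$. Each arises from an explicit but tedious computation: $\tfrac16$ by integrating the Taylor expansion of $F$ over $I_0$ with a remainder uniform in $m\geq 4$; $\tfrac{\pi}{16}$ as the product of a uniform $L^{\infty}$-bound for $|F'|$ on $[0,2\pi]$ with the perturbation bound from Lemma~\ref{Lemma gamma_j est}; and the factor $(1+\tfrac{1}{36})$ as the contribution of the smaller second-order midpoint-quadrature correction. None of these steps is conceptually hard, but uniformity in $m$ (including the small cases $m=4,5$) forces the constants in Lemma~\ref{Lemma gamma_j est} to be matched exactly to the uniform bounds on $F'$ and $F''$, which is where most of the technical work goes.
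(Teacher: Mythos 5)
Your overall framework---reading the left-hand side as a Riemann sum of $F(x)=\frac{1-\cos x}{(5-4\cos x)\ln(5-4\cos x)}$ over the $m$ arcs of length $\frac{2\pi}{m}$ centred at $\frac{2\pi j}{m}$, discarding the arc around $0$ with the bound $F\geq\frac16$ there (which produces the $-\frac16$), and controlling the per-arc error via the localization of $\gamma_j$ from Lemma~\ref{Lemma gamma_j est}---is exactly the paper's. The gap is in the quadrature bookkeeping. In the paper the factor $\left(1+\frac{1}{36}\right)$ has nothing to do with second derivatives: one bounds $|F(\gamma_j)-F(x)|\leq|x-\gamma_j|\max|F'|$ for every $x$ in the arc and computes directly
\[
\int_{I_j}|x-\gamma_j|\d x\leq\int_0^{\frac{\pi}{m}+\frac{\pi}{6m}}x\d x+\int_0^{\frac{\pi}{m}-\frac{\pi}{6m}}x\d x=\frac{\pi^2}{m^2}\left(1+\frac{1}{36}\right),
\]
which, multiplied by $\frac{m}{2\pi}$, by the numerical bound $\max|F'|<\frac18$, and by the $m-1$ arcs, gives precisely $\frac{m-1}{m}\cdot\frac{\pi}{16}\left(1+\frac{1}{36}\right)$. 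No bound on $F''$ enters anywhere.

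Your midpoint-rule split, with first-order term $\bigl|\gamma_j-\frac{2\pi j}{m}\bigr|\max|F'|$ and second-order term $\frac{\pi^2}{6m^2}\max|F''|$, cannot deliver these constants, so attributing the $\left(1+\frac{1}{36}\right)$ to the ``smaller second-order correction'' is where the argument breaks. By your own expansion $F(x)=\frac14-\frac14x^2+O(x^4)$, one has $\max|F''|\geq|F''(0)|=\frac12$, and the arcs adjacent to $0$ see essentially this value; summing your second-order terms then gives about $\frac{(m-1)\pi^2}{12m^2}$, which for small $m$ already exceeds the entire allowance: for $m=4$ it is $\approx0.154$, the first-order part contributes $\approx0.049$, while the target $\frac{m-1}{m}\cdot\frac{\pi}{16}\left(1+\frac{1}{36}\right)\approx0.151$. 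So for $m=4,5,6$ (cases the paper needs, since Proposition~\ref{zbytek} is used for all $m\geq4$) your scheme does not reproduce the stated right-hand side, and the final constants are asserted rather than established. The repair is simply to drop the midpoint split and use the one-term Lipschitz estimate on each arc together with the exact computation of $\int_{I_j}|x-\gamma_j|\d x$ displayed above.
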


\begin{proof}
Let us denote
\[
f(x)=\frac{1-\cos x}{\ln(5-4\cos x)}\,;
\]
then
\begin{equation}\label{Riemann}
\sum_{j=1}^{m-1}\frac{1-\cos\gamma_j}{\ln(5-4\cos\gamma_j)}=\frac{m}{2\pi}\sum_{j=1}^{m-1}\frac{2\pi}{m}f(\gamma_j)
\end{equation}
The estimate~\eqref{gamma_j est} implies $\gamma_j\in\left(\frac{2\pi}{m}\left(j-\frac{1}{2}\right),\frac{2\pi}{m}\left(j+\frac{1}{2}\right)\right)$. Therefore, the sum~\eqref{Riemann} is a Riemann sum of the function $f$ with respect to the tagged partition
\begin{equation*}\label{partition}
\frac{\pi}{m}=x_0<x_1<\ldots<x_{m-1}=2\pi-\frac{\pi}{m}\,, \qquad\text{where } x_j=\frac{2\pi}{m}\left(j+\frac{1}{2}\right)\,,
\end{equation*}
of interval $\left[\frac{\pi}{m},2\pi-\frac{\pi}{m}\right]$.
Let us rewrite the summands of~\eqref{Riemann} using a trivial identity
\[
\frac{2\pi}{m}f(\gamma_j)=\int_{x_{j-1}}^{x_{j}}f(x)\d x+\int_{x_{j-1}}^{x_{j}}(f(\gamma_j)-f(x))\d x\,.
\]
Since
\[
f(\gamma_j)-f(x)\leq|x-\gamma_j|\cdot\max_{x\in(x_{j-1},x_j)}\{|f'(x)|\}\leq|x-\gamma_j|\cdot\max_{x\in[0,2\pi)}\{|f'(x)|\}\,,
\]
we have
\[
\frac{2\pi}{m}f(\gamma_j)\leq\int_{x_{j-1}}^{x_{j}}f(x)\d x+\max_{x\in[0,2\pi)}\{|f'(x)|\}\int_{x_{j-1}}^{x_{j}}|x-\gamma_j|\d x\,.
\]
Now we apply another identity, valid for any $\gamma_j\in[x_{j-1},x_j]$,
\[
\int_{x_{j-1}}^{x_{j}}|x-\gamma_j|\d x=\int_{x_{j-1}}^{\gamma_j}(\gamma_j-x)\d x+\int_{\gamma_j}^{x_{j}}(x-\gamma_j)\d x=\int_{0}^{\gamma_j-x_{j-1}}x\d x+\int_{0}^{x_{j}-\gamma_j}x\d x\,,
\]
which provides us, using the estimate~\eqref{gamma_j est}, the inequality
\[
\int_{x_{j-1}}^{x_{j}}|x-\gamma_j|\d x\leq\int_{0}^{\frac{\pi}{m}+\frac{\pi}{6m}}x\d x+\int_{0}^{\frac{\pi}{m}-\frac{\pi}{6m}}x\d x=\frac{\pi^2}{m^2}\left(1+\frac{1}{36}\right)\,.
\]
Hence
\[
\frac{2\pi}{m}f(\gamma_j)\leq\int_{x_{j-1}}^{x_{j}}f(x)\d x+\max_{x\in[0,2\pi)}\{|f'(x)|\}\frac{\pi^2}{m^2}\left(1+\frac{1}{36}\right)\,.
\]
Consequently,
\begin{multline*}
\sum_{j=1}^{m-1}f(\gamma_j)\leq\frac{m}{2\pi}\left(\int_{\frac{\pi}{m}}^{2\pi-\frac{\pi}{m}}f(x)\d x+(m-1)\max_{x\in[0,2\pi)}\{|f'(x)|\}\frac{\pi^2}{m^2}\left(1+\frac{1}{36}\right)\right)\,.
\end{multline*}
Furthermore, it can be checked that $f(x)\geq1/6$ for all $x\in(0,\pi/2)\cup(3\pi/2,2\pi)$ and $\lim_{x\to0}f(x)=1/4>1/6$, hence
\[
\int_{\frac{\pi}{m}}^{2\pi-\frac{\pi}{m}}f(x)\d x=\int_{0}^{2\pi}f(x)\d x-\int_0^{\frac{\pi}{m}}f(x)\d x-\int_{2\pi-\frac{\pi}{m}}^{2\pi}f(x)\d x\leq\int_{0}^{2\pi}f(x)\d x-\frac{2\pi}{m}\cdot\frac{1}{6}\,.
\]
Finally, a numerical calculation gives $\max_{x\in[0,2\pi)}\{|f'(x)|\}<\frac{1}{8}$.
To sum up,
\begin{multline*}
\sum_{j=1}^{m-1}\frac{1-\cos\gamma_j}{(5-4\cos\gamma_j)\ln(5-4\cos\gamma_j)} \\
\leq\frac{m}{2\pi}\left(\int_{0}^{2\pi}\frac{1-\cos x}{(5-4\cos\gamma_j)\ln(5-4\cos x)}\d x-\frac{2\pi}{m}\cdot\frac{1}{6}+(m-1)\frac{1}{8}\cdot\frac{\pi^2}{m^2}\left(1+\frac{1}{36}\right)\right)\,,
\end{multline*}
whence we obtain the sought formula~\eqref{f est}.
\end{proof}

\begin{lemma}
It holds
\begin{equation}\label{cos est}
\sum_{j=1}^{m-1}\frac{\cos\gamma_j}{5-4\cos\gamma_j}\leq\frac{m}{6}+\frac{5}{6}\,.
\end{equation}
\end{lemma}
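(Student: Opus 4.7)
My plan is to avoid the Riemann-sum machinery used in Lemma~\ref{Lemma f est} (which would give a worse constant here because the derivative of $\cos\gamma/(5-4\cos\gamma)$ has sup-norm noticeably above $1$) and instead exploit the explicit Poisson-kernel expansion
\[
\frac{\cos\gamma}{5-4\cos\gamma}=\frac{1}{6}+\frac{5}{6}\sum_{n=1}^{\infty}\frac{\cos(n\gamma)}{2^{n}}\,.
\]
I derive this from the classical identity $\frac{1-r^{2}}{1-2r\cos\gamma+r^{2}}=1+2\sum_{n\geq 1}r^{n}\cos(n\gamma)$ specialized to $r=\tfrac12$ (so that $1-2r\cos\gamma+r^{2}=\tfrac14(5-4\cos\gamma)$), combined with the trivial decomposition $\frac{\cos\gamma}{5-4\cos\gamma}=-\frac{1}{4}+\frac{5}{4(5-4\cos\gamma)}$.

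Substituting $\gamma=\gamma_j$, summing over $j=1,\ldots,m-1$, and swapping the order of summation (legitimate by absolute convergence of $\sum 2^{-n}$) gives
\[
\sum_{j=1}^{m-1}\frac{\cos\gamma_j}{5-4\cos\gamma_j}=\frac{m-1}{6}+\frac{5}{6}\sum_{n=1}^{\infty}\frac{T_n}{2^{n}}\,,\qquad T_n:=\sum_{j=1}^{m-1}\cos(n\gamma_j)\,.
\]
I then compare each $T_n$ with the sum obtained by replacing $\gamma_j$ with the equidistributed nodes $2\pi j/m$: the standard geometric-sum calculation yields $\sum_{j=1}^{m-1}\cos(2\pi nj/m)=m-1$ when $m\mid n$ and $-1$ otherwise. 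From Lemma~\ref{Lemma gamma_j est} I have $|\gamma_j-2\pi j/m|\leq\pi/(6m)$, so the Lipschitz bound $\cos a-\cos b\leq|a-b|$ yields $\cos(n\gamma_j)-\cos(2\pi n j/m)\leq n\pi/(6m)$, and summing over $j$ produces
\[
T_n\leq (m-1)\cdot\frac{n\pi}{6m}+\begin{cases}m-1,&m\mid n,\\ -1,&\text{otherwise.}\end{cases}
\]

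What remains is a short geometric summation. Using $\sum_{n\geq 1}2^{-n}=1$, $\sum_{n\geq 1}n\,2^{-n}=2$ and $\sum_{n\geq 1,\,m\mid n}2^{-n}=1/(2^m-1)$, I obtain
\[
\sum_{n=1}^{\infty}\frac{T_n}{2^n}\leq-1+\frac{\pi}{3}+\frac{m}{2^m-1}\,.
\]
For $m\geq 4$ the term $m/(2^m-1)$ is at most $4/15$, so the right-hand side is bounded by $-1+\pi/3+4/15<6/5$. Plugging this back gives the desired inequality $\sum_{j=1}^{m-1}\frac{\cos\gamma_j}{5-4\cos\gamma_j}\leq\frac{m-1}{6}+\frac{5}{6}\cdot\frac{6}{5}=\frac{m}{6}+\frac{5}{6}$. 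The only nontrivial step is writing down the Fourier expansion; the crucial gain over a naive Riemann-sum bound is that the ``resonant'' contributions (those $T_n$ that are linear in $m$, occurring only when $m\mid n$) are suppressed by the geometric weight $2^{-n}$, while the quasi-equidistribution of the $\gamma_j$ from Lemma~\ref{Lemma gamma_j est} leaves only a harmless $n\pi/6$ error per off-resonance term.
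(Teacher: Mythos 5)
Your argument is correct, and it reaches the bound by a genuinely different route than the paper. The paper treats this lemma exactly like Lemma~\ref{Lemma f est}: it regards $\sum_{j}\frac{2\pi}{m}f(\gamma_j)$ with $f(\gamma)=\frac{\cos\gamma}{5-4\cos\gamma}$ as a Riemann sum for the tagged partition with nodes $x_j=\frac{2\pi}{m}\left(j+\frac12\right)$, uses the exact value $\int_{\pi/m}^{2\pi+\pi/m}f(x)\,\mathrm{d}x=\frac{\pi}{3}$ for the leading term $\frac m6$, and absorbs the displacement of the $\gamma_j$ (Lemma~\ref{Lemma gamma_j est}) through the derivative bound $\max_x|f'(x)|<\frac98$, ending with $\frac m6+\frac{\pi}{2}\left(1+\frac1{36}\right)\frac98-1<\frac m6+\frac56$; so your opening claim that the Riemann-sum route would give an insufficient constant here is not accurate, though this does not affect the validity of your proof. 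Your Poisson-kernel identity $\frac{\cos\gamma}{5-4\cos\gamma}=\frac16+\frac56\sum_{n\ge1}2^{-n}\cos(n\gamma)$ is correct, the exponential sums $\sum_{j=1}^{m-1}\cos(2\pi nj/m)$ are evaluated correctly ($m-1$ if $m\mid n$, and $-1$ otherwise), the $1$-Lipschitz perturbation $\cos(n\gamma_j)-\cos(2\pi nj/m)\le n\pi/(6m)$ uses \eqref{gamma_j est} legitimately, and the closing arithmetic $\frac{m-1}6+\frac56\left(\frac\pi3+\frac{m}{2^m-1}-1\right)\le\frac m6+\frac56$ checks out. One cosmetic point: the restriction $m\ge4$ you invoke for $\frac{m}{2^m-1}\le\frac4{15}$ is not in the statement of the lemma, but it is harmless, since $\frac{m}{2^m-1}\le\frac23$ for all $m\ge2$ already suffices, and the lemma is in any case only applied for $m\ge4$ in Proposition~\ref{zbytek}. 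The two proofs carry the same information in different packaging: your constant term $\frac16$ is exactly $\frac1{2\pi}\int_0^{2\pi}f(x)\,\mathrm{d}x$, so the leading terms coincide; what your expansion buys is that no bound on $f'$ is needed and the resonant terms $m\mid n$ are suppressed by the geometric weight $2^{-n}$, while the paper's method has the advantage of being uniform with Lemma~\ref{Lemma f est}, where no closed-form Fourier expansion of the integrand is available.
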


\begin{proof}
If we define $\gamma_m:=2\pi$, we can write
\[
\sum_{j=1}^{m-1}\frac{\cos\gamma_j}{5-4\cos\gamma_j}=\frac{m}{2\pi}\sum_{j=1}^{m}\frac{2\pi}{m}\frac{\cos\gamma_j}{5-4\cos\gamma_j}-1\,.
\]
The sum $\sum_{j=1}^{m}\frac{2\pi}{m}f(\gamma_j)$ for $f(\gamma):=\frac{\cos\gamma}{5-4\cos\gamma}$ will be calculated in a similar way as in the proof of Lemma~\ref{Lemma f est}.
Namely,
it is a Riemann sum of the function $f$ with respect to the tagged partition
\[
\frac{\pi}{m}=x_0<x_1<\ldots<x_{m-1}<x_m=2\pi+\frac{\pi}{m}\,, \qquad\text{where } x_j=\frac{2\pi}{m}\left(j+\frac{1}{2}\right)\,,
\]
of interval $\left[\frac{\pi}{m},2\pi+\frac{\pi}{m}\right]$.
Following the steps of the proof of Lemma~\ref{Lemma f est}, we obtain
\begin{multline*}
\sum_{j=1}^{m}f(\gamma_j)\leq\frac{m}{2\pi}\left(\int_{\frac{\pi}{m}}^{2\pi+\frac{\pi}{m}}f(x)\d x+(m-1)\max_{x\in[0,2\pi)}\{|f'(x)|\}\frac{\pi^2}{m^2}\left(1+\frac{1}{36}\right)+\frac{\pi^2}{m^2}\max_{x\in[2\pi,2\pi+\pi/m)}\{|f'(x)|\}\right) \\
<\frac{m}{2\pi}\left(\int_{\frac{\pi}{m}}^{2\pi+\frac{\pi}{m}}f(x)\d x+m\cdot\max_{x\in[0,2\pi)}\{|f'(x)|\}\frac{\pi^2}{m^2}\left(1+\frac{1}{36}\right)\right)\,.
\end{multline*}
With regard to the properties of $\cos$, we find
\[
\int_{\frac{\pi}{m}}^{2\pi+\frac{\pi}{m}}\frac{\cos x}{5-4\cos x}\d x=2\int_{0}^{\pi}\frac{\cos x}{5-4\cos x}\d x=2\left[-\frac{x}{4}+\frac{5}{6}\arctan\left(3\tan\frac{x}{2}\right)\right]_0^\pi=\frac{\pi}{3}\,.
\]
Furthermore,
\[
\max_{x\in[0,2\pi)}\{|f'(x)|\}=\frac{5}{2}\cdot\frac{\sqrt{10\sqrt{153}-11}}{(15-\sqrt{153})^2}<\frac{9}{8}\,.
\]
To sum up,
\[
\sum_{j=1}^{m-1}\frac{\cos\gamma_j}{5-4\cos\gamma_j}\leq\frac{m}{2\pi}\left(\frac{\pi}{3}+\frac{9}{8}\cdot\frac{\pi^2}{m}\left(1+\frac{1}{36}\right)\right)-1=\frac{m}{6}+\frac{\pi}{2}\left(1+\frac{1}{36}\right)\frac{9}{8}-1<\frac{m}{6}+\frac{5}{6}\,.
\]
\end{proof}

\begin{proposition}\label{zbytek}
For all $m\geq4$, it holds
\begin{equation}\label{|sum g(0,k)| est2}
\sum_{k=2m}^{+\infty}|g(k)|<\frac{A}{2\pi}m+1\,,
\end{equation}
where
\begin{equation}\label{A}
A:=\int_{0}^{2\pi}\frac{1-\cos x}{(5-4\cos x)\ln(5-4\cos x)}\d x\approx0.909\,.
\end{equation}
\end{proposition}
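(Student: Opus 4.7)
The plan is to combine the three preceding estimates. Inequality \eqref{|sum g(0,k)| est} already bounds $\sum_{k=2m}^{+\infty}|g(k)|$ by a product of two prefactors $\frac{1}{2(m-1)}$ and $\frac{1}{1-\frac{2\ln 3}{3m}}$ acting on a bracket that contains the two sums $S_1=\sum_{j=1}^{m-1}\frac{1-\cos\gamma_j}{(5-4\cos\gamma_j)\ln(5-4\cos\gamma_j)}$ and $S_2=\sum_{j=1}^{m-1}\frac{\cos\gamma_j}{5-4\cos\gamma_j}$. Lemma~\ref{Lemma f est} controls $S_1$ by $\frac{A}{2\pi}m$ up to an additive $O(1)$ correction, and the preceding lemma controls $S_2$ by $\frac{m}{6}+\frac{5}{6}$. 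The entire task is therefore to plug these two estimates into~\eqref{|sum g(0,k)| est} and check that, after all cancellations, the result is strictly less than $\frac{A}{2\pi}m+1$ for every $m\ge 4$.

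More concretely, I would first substitute the bound from \eqref{f est} into the term $\frac{2m}{1-\ln 3/m}\,S_1$. The dominant contribution is $\frac{2m}{1-\ln 3/m}\cdot\frac{A}{2\pi}m=\frac{Am^2}{\pi}\bigl(1+O(1/m)\bigr)$; the remaining pieces from \eqref{f est}, once multiplied by $\frac{2m}{1-\ln 3/m}$, produce terms of order $m$ with explicit constants (involving $\frac{1}{6}$ and $\frac{\pi}{16}(1+\tfrac{1}{36})$). Adding the bound $S_2\le\frac{m}{6}+\frac{5}{6}$ and then dividing by $2(m-1)(1-\frac{2\ln 3}{3m})$ gives a leading term
\[
\frac{1}{2(m-1)}\cdot\frac{1}{1-\frac{2\ln 3}{3m}}\cdot\frac{Am^2}{\pi}\cdot\frac{1}{1-\frac{\ln 3}{m}}=\frac{A}{2\pi}m\cdot\frac{m}{m-1}\cdot\frac{1}{\bigl(1-\frac{\ln 3}{m}\bigr)\bigl(1-\frac{2\ln 3}{3m}\bigr)},
\]
which is slightly larger than $\frac{A}{2\pi}m$; the surplus has to be absorbed by the remaining $+1$ and compensated by the negative $-\frac{1}{6}$ from \eqref{f est}.

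The main technical obstacle will be the bookkeeping of these $1/m$ corrections and showing that the final expression minus $\frac{A}{2\pi}m$ is uniformly less than $1$ for all $m\ge 4$. A convenient way to carry this out is to use the monotone bounds $\frac{1}{1-\ln 3/m}\le\frac{1}{1-\ln 3/4}$ and $\frac{1}{1-\frac{2\ln 3}{3m}}\le\frac{1}{1-\frac{\ln 3}{6}}$ together with $\frac{m}{m-1}\le\frac{4}{3}$, valid whenever $m\ge 4$, to reduce the problem to a rational function of $m$ with explicit numerical coefficients. Collecting all contributions, one is left with an inequality of the form $\frac{C_1}{m}+\frac{C_2}{m-1}+\cdots<1$ for computable constants $C_i$; since the left-hand side is decreasing in $m$, it suffices to verify it at $m=4$. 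This last step is a direct numerical check using $A\approx 0.909$ and $\ln 3\approx 1.0986$, after which the proposition follows. Asymptotically the estimate is essentially tight, because $S_1$ is a genuine Riemann sum converging to $\frac{m}{2\pi}A$, so no better leading constant can be obtained by this method.
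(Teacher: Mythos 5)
Your overall route is the same as the paper's: substitute the bounds \eqref{f est} and \eqref{cos est} into \eqref{|sum g(0,k)| est} and then verify that the resulting explicit function of $m$ stays below $\frac{A}{2\pi}m+1$ for all $m\geq4$ (the paper performs this last verification as a direct numerical/algebraic check and simply asserts negativity for $m\geq4$). The gap is in your proposed mechanism for that final verification. You cannot apply the frozen bounds $\frac{1}{1-\ln3/m}\leq\frac{1}{1-\ln3/4}$, $\frac{1}{1-\frac{2\ln3}{3m}}\leq\frac{1}{1-\frac{\ln3}{6}}$ and $\frac{m}{m-1}\leq\frac{4}{3}$ to the dominant term: doing so replaces the leading contribution $\frac{A}{2\pi}m\cdot\frac{m}{m-1}\cdot\frac{1}{(1-\frac{\ln3}{m})(1-\frac{2\ln3}{3m})}$ by roughly $\frac{A}{2\pi}m\cdot\frac{4}{3}\cdot 1.69\approx 2.25\,\frac{A}{2\pi}m$, whose excess over $\frac{A}{2\pi}m$ is about $0.18\,m$ and already exceeds the whole budget of $1$ around $m=6$. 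So the quantity you obtain after these replacements is \emph{not} decreasing in $m$ and is not of the form $\frac{C_1}{m}+\frac{C_2}{m-1}+\cdots<1$; checking it at $m=4$ proves nothing for larger $m$. The constant upper bounds are only admissible on the pieces that are themselves $O(1)$ (the $-\frac16+\frac{m-1}{m}\frac{\pi}{16}(1+\frac1{36})$ block and the $\frac{m}{6}+\frac56$ block after division by $2(m-1)$); for the dominant piece you must keep the genuine $m$-dependence of the excess, e.g.\ show that
\begin{equation*}
m\left(\frac{m}{(m-1)\left(1-\frac{\ln3}{m}\right)\left(1-\frac{2\ln3}{3m}\right)}-1\right)
\end{equation*}
is decreasing on $m\geq4$ (it decreases from $\approx5.0$ at $m=4$ towards $3+\frac{5\ln3}{3}-\cdots\approx2.83$), and only then evaluate at $m=4$.

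A second point you should be aware of: the margin is razor-thin. With $A\approx0.909$ the full left-hand side of \eqref{|sum g(0,k)| est} after substituting \eqref{f est} and \eqref{cos est} equals about $\frac{A}{2\pi}\cdot4+0.995$ at $m=4$, i.e.\ the surplus over $\frac{A}{2\pi}m$ is $\approx0.995<1$, decreasing towards $\approx0.53$ as $m\to\infty$. So the verification at $m=4$ must be done with essentially exact arithmetic, and any lossy intermediate over-estimate of a constant-size term risks pushing the bound above $1$. Your asymptotic accounting (surplus $\to$ a constant comfortably below $1$) is correct, and with the corrected treatment of the dominant term plus a monotonicity argument for the exact expression the plan does go through; as written, however, the reduction step is invalid.
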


\begin{proof}
Recall that
\[
\sum_{k=2m}^{+\infty}|g(k)|<\frac{1}{2(m-1)}\cdot\frac{1}{1-\frac{2\ln3}{3m}}\left(\frac{2m}{1-\frac{\ln3}{m}}\sum_{j=1}^{m-1}\frac{1-\cos\gamma_j}{(5-4\cos\gamma_j)\ln(5-4\cos\gamma_j)}+\sum_{j=1}^{m-1}\frac{\cos\gamma_j}{5-4\cos\gamma_j}\right)\,.
\]
cf. formula~\eqref{|sum g(0,k)| est}.
If we estimate the sums using inequalities~\eqref{f est} and~\eqref{cos est}, we obtain
\begin{multline*}
\sum_{k=2m}^{+\infty}|g(k)|-\frac{A}{2\pi}m-1 \\
<
\frac{1}{2(m-1)}\cdot\frac{1}{1-\frac{2\ln3}{3m}}\left(\frac{2m}{1-\frac{\ln3}{m}}\left(\frac{m}{2\pi}A-\frac{1}{6}+\frac{m-1}{m}\cdot\frac{\pi}{16}\left(1+\frac{1}{36}\right)\right)+\frac{m}{6}+\frac{5}{6}\right)
-\frac{A}{2\pi}m-1\,.
\end{multline*}
A numerical integration gives $A\approx0.909\in(0.9,0.91)$. For such value of $A$, the expression above is negative for all $m\geq4$; i.e.,
\[
\sum_{k=0}^{+\infty}|g(k)|-\frac{A}{2\pi}m-1<0 \qquad\text{for all $m\geq4$}\,.
\]
\end{proof}


\section{Main result}\label{Section 6}

\begin{theorem}\label{hlavni}
    For every $m\geq5$, the $m$-bonacci word is $c$-balanced with
\[
c = \lfloor   \kappa m   \rfloor +12,
\]
where $\kappa = \frac{2}{\pi}\int_{0}^{2\pi}\frac{1-\cos x}{(5- 4 \cos x)\ln (5-4\cos x)} {\rm d}x   \approx 0.58$.
\end{theorem}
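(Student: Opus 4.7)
The plan is to assemble the pieces from Sections \ref{relations}--\ref{sumace}. First, combining Propositions \ref{zacatek} and \ref{zbytek}, one has for every $m \geq 4$
\[
\sum_{k=0}^{+\infty} |g(k)| = \sum_{k=0}^{2m-1} |g(k)| + \sum_{k=2m}^{+\infty} |g(k)| < \frac{5}{4} + \left(\frac{A}{2\pi} m + 1\right) = \frac{A}{2\pi} m + \frac{9}{4}.
\]
Applying Corollary \ref{bound} with $a = 0$ then yields
\[
c_0 \leq 2 \sum_{k=0}^{+\infty} |g(k)| < \frac{A}{\pi} m + \frac{9}{2}.
\]

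The next step is to verify that the hypothesis $c_0 \leq 2^{m-1} - 3$ of Proposition \ref{prop_c0} is met for all $m \geq 5$. The bound on $c_0$ derived above is linear in $m$, while $2^{m-1} - 3$ grows exponentially; a direct check at $m = 5$ (where our bound gives $c_0 \leq 5$, but $2^{m-1} - 3 = 13$) together with monotonicity settles this for all $m \geq 5$.

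With the hypothesis in place, Proposition \ref{prop_c0} yields $c \leq 2 c_0 + 3$, so
\[
c \leq 2 c_0 + 3 < 2 \cdot \frac{A}{\pi} m + 9 + 3 = \frac{2A}{\pi} m + 12 = \kappa m + 12.
\]
Since $c$ is an integer, this strict inequality forces $c \leq \lfloor \kappa m + 12 \rfloor = \lfloor \kappa m \rfloor + 12$ (the equality $\lfloor \kappa m + 12 \rfloor = \lfloor \kappa m \rfloor + 12$ holds because $12 \in \mathbb{Z}$; if $\kappa m + 12$ happens to be an integer, the strict inequality gives us an even smaller value, still bounded by $\lfloor \kappa m \rfloor + 12$).

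The principal obstacle---fine control of the tail sum $\sum_{k=2m}^{+\infty} |g(k)|$, which required the Riemann-sum comparison in Lemma \ref{Lemma f est} together with the precise spectral information on the polynomial $p(x)$ from the Appendix---has already been resolved in Section \ref{druha cast}. What remains for the proof of Theorem \ref{hlavni} is therefore essentially the packaging step described above: summing the two partial estimates, feeding the result into Corollary \ref{bound} for the letter $0$, checking the smallness hypothesis of Proposition \ref{prop_c0}, and converting the real-valued upper bound into the integer statement by taking floors.
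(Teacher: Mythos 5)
Your proposal is correct and follows essentially the same route as the paper's own proof: it sums the bounds of Propositions~\ref{zacatek} and~\ref{zbytek}, feeds the result into Corollary~\ref{bound} for the letter $0$, verifies the hypothesis of Proposition~\ref{prop_c0}, and concludes $c \leq 2c_0+3 < \kappa m + 12$. Your explicit handling of the floor and of the hypothesis check at $m=5$ is a slightly more detailed write-up of steps the paper states without comment.
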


\begin{proof}

In Propositions \ref{zacatek} and  \ref{zbytek} we showed

\[\sum_{k=0}^{2m-1}|g(k)|<\frac{5}{4}\qquad \hbox{and} \qquad
\sum_{k=2m}^{+\infty}|g(k)|<\frac{A}{2\pi}m+1\quad\text{for all $m\geq4$}\,;\]
therefore,
\begin{equation}\label{spolu}  \sum_{k=0}^{+\infty}|g(0,k)| < \tfrac94 + \frac{A}{2\pi} m\,,\end{equation}
where $A = \int_{0}^{2\pi}\frac{1-\cos x}{(5- 4 \cos x)\ln (5-4\cos x)} {\rm d}x   \approx 0.909$.

\medskip

Having this bound in hand, we can use  Corollary~\ref{bound} to estimate
 the balance constant of letter $0$  by
\[c_0 \leq 2\sum_{k=0}^{+\infty}|g(0,k)| \leq  \tfrac92 + \frac{A}{\pi} m\,.\]

Since  $  \tfrac92 + \frac{A}{\pi} m \leq 2^{m-1} - 3 $ for any $m\geq 5$, the assumption of
Proposition \ref{prop_c0}    is \fixK{fulfilled} and thus \fixK{the} $m$-bonacci word is $c$-balanced with

\[
    c= 2c_0 +3 \leq   3+  4\sum_{k=0}^{+\infty}|g(0,k)| \leq  12 + \frac{2A}{\pi}m \,,
\]
which proves the theorem.

\end{proof}




\section{Acknowledgement}

We acknowledge financial support by the Czech Science Foundation
grant GA\v{C}R 201/09/0584, by the Grant Agency of the Czech
Technical University in Prague, grant SGS11/162/OHK4/3T/14, and by
the Foundation \fixK{"Nad\'{a}n\'{i}} Josefa,
Marie a Zde\v{n}ka Hl\'{a}vkov\'{y}ch".

\bibliographystyle{fundam}
\bibliography{citations}

\nocite{*}
\bibliographystyle{fundam}
\bibliography{figuide}


\appendix
\section{On eigenvalues of $M$}\label{Apendix A}

In this section we examine the eigenvalues of the matrix of
substitution. In particular, we estimate their absolute values and
arguments. Such information is essential for estimating the sums
$\sum_{k=0}^{2m-1}|g(0,k)|$ and $\sum_{k=2m}^{+\infty}|g(0,k)|$ in
Section~\ref{sumace}.

Let us recall that the eigenvalues of the matrix  of substitution
$M$  are zeros of its characteristic  polynomial
$p(x)=x^m-x^{m-1}-x^{m-2}-\fixK{\ldots}-x-1$. The following observation
will make further calculations substantially simpler.

\begin{observation}\label{obs. beta}
Every zero of the p\fixK{o}lynomial  $p(x)$ is a root of the equation
\begin{equation}\label{beta}
x^m(2-x)=1\,.
\end{equation}
\end{observation}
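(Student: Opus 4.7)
The plan is to verify the observation by a direct algebraic manipulation of $p(x)$, exploiting the fact that the sum $x^{m-1} + x^{m-2} + \cdots + x + 1$ is a geometric series.

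First I would write $p(x) = x^m - (x^{m-1} + x^{m-2} + \cdots + x + 1)$ and multiply both sides by $(x-1)$. The geometric series identity $(x-1)(x^{m-1}+x^{m-2}+\cdots+1) = x^m - 1$ then gives
\[
(x-1)\,p(x) = x^m(x-1) - (x^m-1) = x^{m+1} - 2x^m + 1.
\]
Next I would check that $x=1$ is not itself a zero of $p$: indeed $p(1) = 1 - m \neq 0$ for $m \geq 2$. Consequently, a complex number $x$ satisfies $p(x)=0$ if and only if $x \neq 1$ and $x^{m+1} - 2x^m + 1 = 0$.

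The last step is a trivial rearrangement: $x^{m+1} - 2x^m + 1 = 0$ is equivalent to $x^m(2-x) = 1$, which is precisely equation~\eqref{beta}. Since there is no obstacle beyond bookkeeping, the argument is complete in a few lines; the only thing worth being explicit about is that the division by $(x-1)$ is legitimate because $1$ is not a root of $p$, so no spurious solutions are introduced and no genuine ones are lost.
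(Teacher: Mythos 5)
Your argument is correct and is essentially the paper's own proof: both rewrite $p(x)$ via the geometric series as $(x-1)p(x)=x^{m+1}-2x^m+1$ (equivalently $p(x)=\frac{x^{m+1}-2x^m+1}{x-1}$ for $x\neq 1$) and then rearrange to $x^m(2-x)=1$. Your explicit check that $p(1)=1-m\neq 0$, so no root of $p$ equals $1$, is a small point the paper leaves implicit, but the route is the same.
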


\begin{proof}
 For every $x\neq 1$,  we can write
\begin{equation}\label{p}
p(x)=x^m-\frac{x^m-1}{x-1}=\frac{x^{m+1}-2x^m+1}{x-1}\,.
\end{equation}
In particular, $p(\beta_j)=0$ implies
$\beta_j^{m+1}-2\beta_j^m+1=0$, whence  $\beta_j$ is a root of
equation~\eqref{beta}.
\end{proof}

At first we derive a fine estimate on $\beta$, which is needed
for calculating the sum $\sum_{k=0}^{2m-1}|g(0,k)|$.

\begin{lemma}\label{lemma x0}
The dominant  eigenvalue $\beta>1$ of the matrix of substitution
$M$ obeys the inequalities
\begin{equation}\label{x0}
\frac{1}{2^m-\frac{m}{2}}<2-\beta<\frac{1}{2^m-\frac{m+1}{2}}\,.
\end{equation}
\end{lemma}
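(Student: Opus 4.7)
The plan is to reformulate the inequalities using Observation~\ref{obs. beta}, which gives $\beta^m(2-\beta) = 1$, hence $2-\beta = 1/\beta^m$. The bounds in~\eqref{x0} thus become equivalent to
\[
2^m - \frac{m+1}{2} < \beta^m < 2^m - \frac{m}{2}.
\]

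To establish these, I would study the auxiliary function $F(x) := x^m(2-x)$. Its derivative $F'(x) = x^{m-1}\bigl(2m - (m+1)x\bigr)$ vanishes uniquely at $\frac{2m}{m+1}$, so $F$ is increasing on $[1, \frac{2m}{m+1}]$ and decreasing on $[\frac{2m}{m+1}, 2]$. Since $F(1) = F(\beta) = 1$, $F(2) = 0$, and $\beta \neq 1$, the root $\beta$ must lie strictly in the decreasing part, i.e.\ $\beta \in \bigl(\tfrac{2m}{m+1},\, 2\bigr)$, and $F$ is strictly decreasing at $\beta$. Setting $\varepsilon_1 := 1/(2^m - m/2)$ and $\varepsilon_2 := 1/(2^m - (m+1)/2)$ (both lying in the decreasing range for $m \geq 2$), the desired bound $2-\varepsilon_2 < \beta < 2-\varepsilon_1$ reduces to the two numerical inequalities
\[
(2-\varepsilon_1)^m < 2^m - \tfrac{m}{2}
\quad\text{and}\quad
(2-\varepsilon_2)^m > 2^m - \tfrac{m+1}{2}.
\]

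Each of these is handled by writing $(2-\varepsilon_i)^m = 2^m(1 - \varepsilon_i/2)^m$ and applying standard estimates for $(1-x)^m$. For the first, the upper bound $(1-x)^m \leq e^{-mx}$ reduces matters to verifying $m\varepsilon_1/2 > -\ln(1 - m/2^{m+1})$, which is just the classical inequality $u/(1-u) > -\ln(1-u)$ at $u = m/2^{m+1}$ (note the convenient identity $m\varepsilon_1/2 = u/(1-u)$). The second inequality is obtained from Bernoulli's inequality $(1-x)^m \geq 1 - mx$, provided that $m\varepsilon_2/2 \leq (m+1)/2^{m+1}$; this last comparison boils down to $2^{m+1} \geq (m+1)^2$, which holds for $m \geq 4$. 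The remaining small cases $m \in \{2,3\}$ can be verified directly from the explicit binomial expansion of $(1-\varepsilon_2/2)^m$.

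The main obstacle here is calibration rather than any deep step: the gap between $\varepsilon_1$ and $\varepsilon_2$ corresponds to shifting $\beta^m$ by only $1/2$ on a scale of $2^m$, so the estimates on $(1-x)^m$ must be sharp enough that the slack they introduce does not exceed this target. Once the monotonicity of $F$ has been exploited to translate the claim into the two one-sided inequalities above, the remaining computations are routine.
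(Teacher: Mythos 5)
Your proposal is correct and takes essentially the same route as the paper: both rewrite the claim via $\beta^m(2-\beta)=1$, use monotonicity of the same auxiliary function (the paper works with $q(x)=(2-x)^m x-1$ in the variable $x=2-\beta$, you with $F(x)=x^m(2-x)$ in $\beta$ itself), and then verify the two endpoint sign conditions by elementary Bernoulli-type estimates. The only minor variation is on the side $(2-\varepsilon_1)^m<2^m-\tfrac m2$, where you use $(1-x)^m\le e^{-mx}$ together with $-\ln(1-u)<\tfrac{u}{1-u}$ while the paper uses $(1+x)^m<1+mx+\binom{m}{2}x^2$, and you defer $m\in\{2,3\}$ to direct checks where the paper treats only $m=2$ separately.
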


\begin{proof}
Observation~\ref{obs. beta} implies $\beta^m(2-\beta)=1$,
hence $\beta<2$. Let us set $x_0:=2-\beta$. Obviously, $x_0$
is a root of the polynomial
\[
q(x)=(2-x)^m\cdot x-1\,.
\]
Since $\beta\in(1,2)$, necessarily $x_0\in(0,1)$. It holds
$q'(x)=(2-x)^{m-1}(2-x-mx)$, therefore, $q$ grows in $[0,2/(m+1)]$
and decreases in $[2/(m+1),1]$. Since $q(0)=-1$ and $q(1)=0$, the
root $x_0$ belongs to the interval $(0,2/(m+1))$, in which $q$
grows. Consequently, proving inequalities~\eqref{x0} consists in
showing that
\[
q\left(\frac{1}{2^m-\frac{m}{2}}\right)<0<q\left(\frac{1}{2^m-\frac{m+1}{2}}\right)\,.
\]
Let us start with the estimate of $2-\beta$ from above. We have
\[
q\left(\frac{1}{2^m-\frac{m+1}{2}}\right)=\left(2-\frac{1}{2^m-\frac{m+1}{2}}\right)^m\frac{1}{2^m-\frac{m+1}{2}}-1=\left(1-\frac{1}{2^{m+1}-(m+1)}\right)^m\frac{1}{1-\frac{m+1}{2^{m+1}}}-1\,.
\]
Since $(1+x)^m>1+mx$ for all $x\in(-1,1)$, it holds
\[
q\left(\frac{1}{2^m-\frac{m+1}{2}}\right)>\frac{1-\frac{m}{2^{m+1}-(m+1)}}{1-\frac{m+1}{2^{m+1}}}-1=\frac{-\frac{m}{2^{m+1}-(m+1)}+\frac{m+1}{2^{m+1}}}{1-\frac{m+1}{2^{m+1}}}=\frac{2^{m+1}-(m+1)^2}{\left[2^{m+1}\left(1-\frac{m+1}{2^{m+1}}\right)\right]^2}\geq0
\]
for all $m\geq3$. Hence, $q\left(1/(2^{m}-\frac{m+1}{2})\right)>0$
for all $m\geq3$. If $m=2$, the statement can be proved in the
same way, just we use the exact expression $(1+x)^2=1+2x+x^2$
instead of the estimate $(1+x)^m>1+mx$.

Let us proceed to the extimate of $2-\beta$ from below.
\[
q\left(\frac{1}{2^{m}-\frac{m}{2}}\right)=\left(2-\frac{1}{2^{m}-\frac{m}{2}}\right)^m\frac{1}{2^{m}-\frac{m}{2}}-1=\frac{1}{1-\frac{m}{2^{m+1}}}\left[\left(1-\frac{1}{2^{m+1}-m}\right)^m-\left(1-\frac{m}{2^{m+1}}\right)\right]\,.
\]
For all $x\in(-1,0)$, it holds $(1+x)^m<1+mx+{m\choose2}x^2$;
therefore,
\begin{multline*}
\left(1-\frac{1}{2^{m+1}-m}\right)^m-\left(1-\frac{m}{2^{m+1}}\right)<1-\frac{m}{2^{m+1}-m}+\frac{m(m-1)}{2(2^{m+1}-m)^2}-1+\frac{m}{2^{m+1}} \\
=\frac{m}{2(2^{m+1}-m)^2}\left(-2^{m+2}+2m+m-1+2^{m+2}-4m+\frac{m^2}{2^m}\right) \\
=\frac{m}{2(2^{m+1}-m)^2}\left(-1-m+\frac{m^2}{2^m}\right)<0
\end{multline*}
for all $m\geq2$. Hence $q\left(1/(2^{m}-\frac{m}{2})\right)<0$\,.
\end{proof}

Now we proceed to the eivenvalues $\beta_j$ for $j=1,\ldots,m-1$.
For the sake of convenience let us set $B_j:=|\beta_j|$ and
$\gamma_j:=\arg(\beta_j)$, i.e.,
\[
\beta_j=B_j\e^{\I\gamma_j} \qquad\text{for all
$j=1,\ldots,m-1$}\,.
\]

\begin{lemma}\label{Lemma B_j est}
It holds
\begin{equation}\label{B_j est}
|\beta_j|<1-\frac{\ln(5-4\cos\gamma_j)}{2m}\left(1-\frac{\ln3}{m}\right)
\end{equation}
for all $j=1,\ldots,m-1$.
\end{lemma}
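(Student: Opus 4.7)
The plan starts with Observation~\ref{obs. beta}, which gives $\beta_j^m(2-\beta_j) = 1$; taking absolute values and squaring yields
\begin{equation*}
B_j^{2m}\,F(B_j) = 1, \qquad F(B) := 4 - 4B\cos\gamma_j + B^2.
\end{equation*}
Setting $\psi(B) := B^{2m}F(B)$, a direct computation yields $\psi'(B)/\psi(B) = 2m/B + (2B - 4\cos\gamma_j)/F(B)$; since $F(B) \geq (2-B)^2 \geq 1$ and $|2B-4\cos\gamma_j|\le 6$ on $(0,1]$, this is bounded below by $2m - 6 > 0$ for $m\geq 4$, so $\psi$ is strictly increasing. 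Because $\psi(B_j) = 1$, the target inequality $B_j < 1-\mu$ (with $\mu := \frac{L}{2m}\bigl(1 - \frac{\ln 3}{m}\bigr)$ and $L := \ln(5-4\cos\gamma_j)$) is therefore equivalent to the single inequality $\psi(1-\mu) > 1$.

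I would verify $\psi(1-\mu) > 1$ via the identity
\begin{equation*}
F(1-\mu) = (5-4\cos\gamma_j) + \mu(4\cos\gamma_j - 2 + \mu),
\end{equation*}
writing $\psi(1-\mu) = (1-\mu)^{2m}(5-4\cos\gamma_j) + (1-\mu)^{2m}\mu(4\cos\gamma_j - 2 + \mu)$. The leading piece is handled via the elementary estimate $-\ln(1-\mu) \le \mu/(1-\mu)$: one checks that $(1-\mu)^{2m}(5-4\cos\gamma_j) \ge 1$ reduces to $\mu(2m+L) \leq L$, which upon substituting $\mu$ becomes equivalent to $m(L-2\ln 3) \leq L\ln 3$; since $L \leq \ln 9 = 2\ln 3$ (because $|\cos\gamma_j|\le 1$), this holds. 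In the \emph{easy} case $4\cos\gamma_j - 2 + \mu \geq 0$ the correction term is non-negative, so $\psi(1-\mu) \geq 1$ with strict inequality coming from the first piece, and we are done.

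The main obstacle is the \emph{hard} case, $4\cos\gamma_j - 2 + \mu < 0$ (morally, $\cos\gamma_j < 1/2$): here the correction is negative and must be absorbed by a strict surplus in the leading piece. The plan is to sharpen the bound on $(1-\mu)^{2m}(5-4\cos\gamma_j)$ by replacing $-\ln(1-\mu) \leq \mu/(1-\mu)$ with a Taylor expansion of $2m\ln(1-\mu) + L$ that retains the quadratic term in $\mu$, extracting a surplus of order $L\ln 3/m$ over $1$. This must dominate the correction $(1-\mu)^{2m}\mu(2 - 4\cos\gamma_j - \mu)$, controlled using the key inequality $(2-4\cos\gamma_j)/(5-4\cos\gamma_j) \leq 2/3$ (valid when $\cos\gamma_j \leq 1/2$) together with the a priori bound $B_j \geq 3^{-1/m}$ (equivalently $F(B_j)\leq 9$), which constrains $\mu$ further. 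The cancellation between the quadratic remainder and the correction is tight, and this delicate bookkeeping is exactly what forces the specific calibration $(1-\ln 3/m)$ appearing in the statement.
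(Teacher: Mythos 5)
Your reduction is sound as far as it goes: with $\psi(B)=B^{2m}(4-4B\cos\gamma_j+B^2)$ one has $\psi(B_j)=1$ by Observation~\ref{obs. beta}, and $\psi$ is strictly increasing on $(0,1]$ for $m\ge 4$ (your bound $\psi'/\psi\ge 2m-6$ is correct; $m=2,3$ would need a separate word, and note the paper's argument covers all $m\ge2$), so the lemma becomes $\psi(1-\mu)>1$ with $\mu=\frac{L}{2m}\bigl(1-\frac{\ln3}{m}\bigr)$, $L=\ln(5-4\cos\gamma_j)$. Your leading-piece estimate also checks out: $\mu(2m+L)\le L$ is equivalent to $m(L-2\ln3)\le L\ln3$, which holds since $L\le\ln 9$, and $-\ln(1-\mu)<\mu/(1-\mu)$ is strict, so $(1-\mu)^{2m}(5-4\cos\gamma_j)>1$ and the case $4\cos\gamma_j-2+\mu\ge 0$ is complete.

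The genuine gap is the case $4\cos\gamma_j-2+\mu<0$, and that case is the substance of the lemma: it contains all $\gamma_j$ with $\cos\gamma_j$ (roughly) below $\tfrac12$, in particular $\gamma_j$ near $\pi$, where the bound is tightest. There you only state a plan (``retain the quadratic term'', ``surplus of order $L\ln3/m$'', ``must dominate the correction'') and you yourself call the cancellation tight; no inequality with controlled remainders is written, so nothing is proved precisely where the calibration $1-\frac{\ln3}{m}$ matters. Concretely, after taking logarithms the statement you need is, at leading order, $\frac{L\ln3}{m}-\frac{L^2}{4m}+\frac{L}{2m}\cdot\frac{4\cos\gamma_j-2}{5-4\cos\gamma_j}>0$, i.e. $2\ln3-\frac{L}{2}+\frac{4\cos\gamma_j-2}{5-4\cos\gamma_j}>0$; writing $s=5-4\cos\gamma_j\in[1,9]$ this equals $2\ln3-\frac12\ln s+\frac{3}{s}-1$, which is decreasing in $s$ with minimum $\ln3-\frac23>0$ at $\gamma_j=\pi$. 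So your route can be completed, but only after you quantify, uniformly in $m$ and not just asymptotically, the error of replacing $2m\ln(1-\mu)$ by $-2m\mu-m\mu^2$ and the error in estimating $\ln\bigl(1+\mu\frac{4\cos\gamma_j-2+\mu}{5-4\cos\gamma_j}\bigr)$ --- exactly the bookkeeping you defer. For comparison, the paper argues by contradiction from the same identity~\eqref{B_j}: it writes $B_j=1-\frac{L}{2m}(1+\delta_j)$, assumes $\delta_j\le-\frac{\ln3}{m}$, bounds $B_j^{-2m}$ from above by $(5-4\cos\gamma_j)^{(1+\delta_j)(1+\frac{L}{2m-L})}$ and from below by $5-4\cos\gamma_j+(1-B_j)(4\cos\gamma_j-2)$, and closes the estimate with a chord (convexity) bound for the exponential on $[-\ln9,0]$ together with $L\le\ln9$ and $\frac{4\cos\gamma_j-2}{5-4\cos\gamma_j}\ge-\frac23$ --- the same two extremal facts your sketch would ultimately rely on, but assembled into an explicit, finite chain of inequalities valid for every $m\ge2$. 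Until you supply an analogous explicit chain in your hard case, the proof is incomplete.
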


\begin{proof}
Since the value $\beta_j=B_j\e^{\I\gamma_j}$ is a solution of
equation~\eqref{beta}, necessarily
\[
\left|B_j^m\e^{\I m\gamma_j}(2-B_j\e^{\I\gamma_j})\right|^2=1\,.
\]
Hence
\begin{equation}\label{B_j}
B_j^{2m}\left(4-4B_j\cos\gamma_j+B_j^2\right)=1\,.
\end{equation}
Note that if $m\gg1$, then obviously $B_j\approx1$. Therefore,
equation~\eqref{B_j} can be expressed approximately as
\[
B_j^{2m}\left(4-4\cos\gamma_j+1\right)\approx1 \qquad\text{for
$m\gg1$}\,.
\]
Consequently, for $m\gg1$ we have
\begin{multline}\label{aproximace}
B_j\approx\frac{1}{\sqrt[2m]{5-4\cos\gamma_j}}=\e^{-\frac{\ln(5-4\cos\gamma_j)}{2m}}\approx\left[\left(1+\frac{1}{2m}\right)^\frac{1}{2m}\right]^{-\frac{\ln(5-4\cos\gamma_j)}{2m}} \\
=\left(1+\frac{1}{2m}\right)^{-\ln(5-4\cos\gamma_j)}\approx1-\frac{\ln(5-4\cos\gamma_j)}{2m}\,.
\end{multline}
With regard to this approximation, let us set
\begin{equation}\label{delta_j}
B_j=1-\frac{\ln(5-4\cos\gamma_j)}{2m}(1+\delta_j)\,,
\end{equation}
for all $m$, where $\delta_j$ compensates the error of the approximation~\eqref{aproximace}.
Comparing the statement~\eqref{B_j est} with the definition of
$\delta_j$, we shall prove that
\[
\delta_j>-\frac{\ln3}{m} \qquad\text{for all $j=1,\ldots,m-1$}\,.
\]

We proceed by contradiction. Let there be a $j\in\{1,\ldots,m-1\}$
such that $\delta_j\leq-\frac{\ln3}{m}$. (Note that necessarily
$\delta_j>-1$, because $\beta_j$'s are of moduli less than one.)
For all $x>\alpha>1$, it holds
\[
\frac{1}{\left(1-\frac{\alpha}{x}\right)^x}=\left(1+\frac{\alpha}{x-\alpha}\right)^x=\left[\left(1+\frac{1}{\frac{x}{\alpha}-1}\right)^{\frac{x}{\alpha}-1}\right]^\frac{x}{\frac{x}{\alpha}-1}<\e^\frac{x}{\frac{x}{\alpha}-1}=\left(\e^\alpha\right)^{1+\frac{\alpha}{x-\alpha}}\,.
\]
Since $B_j=1-\frac{\alpha}{x}$ for $x=2m$ and
$\alpha=(1+\delta_j)\ln(5-4\cos\gamma_j)$, we have
\begin{multline*}
\frac{1}{B_j^{2m}}<\left(\e^{(1+\delta_j)\ln(5-4\cos\gamma_j)}\right)^{1+\frac{(1+\delta_j)\ln(5-4\cos\gamma_j)}{2m-(1+\delta_j)\ln(5-4\cos\gamma_j)}}=(5-4\cos\gamma_j)^{(1+\delta_j)\left(1+\frac{(1+\delta_j)\ln(5-4\cos\gamma_j)}{2m-(1+\delta_j)\ln(5-4\cos\gamma_j)}\right)}\,.
\end{multline*}
Our assumption on $\delta_j$ implies $\delta_j<0$, therefore
\[
\frac{(1+\delta_j)\ln(5-4\cos\gamma_j)}{2m-(1+\delta_j)\ln(5-4\cos\gamma_j)}\leq\frac{\ln(5-4\cos\gamma_j)}{2m-\ln(5-4\cos\gamma_j)}\,;
\]
hence
\begin{equation}\label{1/Bj above}
\frac{1}{B_j^{2m}}<(5-4\cos\gamma_j)^{(1+\delta_j)\left(1+\frac{\ln(5-4\cos\gamma_j)}{2m-\ln(5-4\cos\gamma_j)}\right)}\,.
\end{equation}
At the same time we have from equation~\eqref{B_j}
\begin{multline}\label{1/Bj below}
\frac{1}{B_j^{2m}}=4-4B_j\cos\gamma_j+B_j^2=5-4\cos\gamma_j+(1-B_j)(4\cos\gamma_j-2)+(1-B_j)^2 \\
>5-4\cos\gamma_j+(1-B_j)(4\cos\gamma_j-2)\,.
\end{multline}
Putting inequalities~\eqref{1/Bj above} and \eqref{1/Bj below}
together, we get
\begin{equation*}
(5-4\cos\gamma_j)^{(1+\delta_j)\left(1+\frac{\ln(5-4\cos\gamma_j)}{2m-\ln(5-4\cos\gamma_j)}\right)}>5-4\cos\gamma_j+(1-B_j)(4\cos\gamma_j-2)\,;
\end{equation*}
hence
\[
(5-4\cos\gamma_j)^{\delta_j+(1+\delta_j)\frac{(1+\delta_j)\ln(5-4\cos\gamma_j)}{2m-(1+\delta_j)\ln(5-4\cos\gamma_j)}}>1+(1-B_j)\frac{4\cos\gamma_j-2}{5-4\cos\gamma_j}\,.
\]
This gives, with regard to equation~\eqref{delta_j},
\begin{equation}\label{est1}
\e^{\left(\delta_j+(1+\delta_j)\frac{\ln(5-4\cos\gamma_j)}{2m-\ln(5-4\cos\gamma_j)}\right)\ln(5-4\cos\gamma_j)}-1>\frac{\ln(5-4\cos\gamma_j)}{2m}(1+\delta_j)\frac{4\cos\gamma_j-2}{5-4\cos\gamma_j}\,.
\end{equation}
Since
$\delta_j\leq-\frac{\ln9}{2m}\leq-\frac{\ln(5-4\cos\gamma_j)}{2m}$
by assumption, it holds
\[
\delta_j+(1+\delta_j)\frac{\ln(5-4\cos\gamma_j)}{2m-\ln(5-4\cos\gamma_j)}\leq0\,,
\]
therefore, the exponent in~\eqref{est1} is negative (or zero).
Moreover, a simple analysis of the exponent, using the fact
$\delta_j>-1$, leads to the inequality
\[
\left(\delta_j+(1+\delta_j)\frac{\ln(5-4\cos\gamma_j)}{2m-\ln(5-4\cos\gamma_j)}\right)\ln(5-4\cos\gamma_j)\geq-\ln9
\qquad\text{for all $\gamma_j\in\R$}\,.
\]
The convexity of the exponential function implies
\[
\e^x-1<\frac{\e^{b}-1}{b}x
\]
for all $b<x\leq0$. Therefore, the left hand side of~\eqref{est1}
obeys
\begin{multline*}
\e^{\left(\delta_j+(1+\delta_j)\frac{\ln(5-4\cos\gamma_j)}{2m-\ln(5-4\cos\gamma_j)}\right)\ln(5-4\cos\gamma_j)}-1 \\
<\frac{1-\e^{-\ln9}}{\ln9}\left(\delta_j+(1+\delta_j)\frac{\ln(5-4\cos\gamma_j)}{2m-\ln(5-4\cos\gamma_j)}\right)\ln(5-4\cos\gamma_j) \\
=\frac{8}{9\ln9}\left(\delta_j+(1+\delta_j)\frac{\ln(5-4\cos\gamma_j)}{2m-\ln(5-4\cos\gamma_j)}\right)\ln(5-4\cos\gamma_j)\,.
\end{multline*}
Inequality~\eqref{est1} together with this estimate impl\fixK{y}
\[
\frac{8}{9\ln9}\left(\delta_j+(1+\delta_j)\frac{\ln(5-4\cos\gamma_j)}{2m-\ln(5-4\cos\gamma_j)}\right)\ln(5-4\cos\gamma_j)>\frac{\ln(5-4\cos\gamma_j)}{2m}(1+\delta_j)\frac{4\cos\gamma_j-2}{5-4\cos\gamma_j}\,.
\]
We divide both sides by $\ln(5-4\cos\gamma_j)$, which is allowed
due to $\gamma_j\neq0$ (recall that $\beta_j\notin(0,+\infty)$ for
all $j=1,\ldots,m-1$); hence
\begin{equation}\label{est2}
\delta_j+(1+\delta_j)\frac{\ln(5-4\cos\gamma_j)}{2m-\ln(5-4\cos\gamma_j)}>\frac{9\ln9}{8}\cdot\frac{1+\delta_j}{2m}\cdot\frac{4\cos\gamma_j-2}{5-4\cos\gamma_j}\,.
\end{equation}
For all $\gamma_j\in\R$, $\ln(5-4\cos\gamma_j)\leq\ln9$ and
\[
\frac{4\cos\gamma_j-2}{5-4\cos\gamma_j}=-1+\frac{3}{5-4\cos\gamma_j}\geq-1+\frac{3}{9}=-\frac{2}{3}\,;
\]
therefore, with regard to inequality~\eqref{est2},
\[
\delta_j+(1+\delta_j)\frac{\ln9}{2m-\ln9}>\frac{9\ln9}{8}\cdot\frac{1+\delta_j}{2m}\cdot\frac{-2}{3}=-\frac{3\ln9}{8m}(1+\delta_j)\,.
\]
Consequently,
\[
\left(1+\frac{1}{2m-\ln9}+\frac{3}{8m}\right)\delta_j>-\frac{1}{2m-\ln9}-\frac{3}{8m}\,;
\]
hence
\[
\delta_j\geq-\frac{\frac{1}{2m-\ln9}+\frac{3}{8m}}{1+\frac{1}{2m-\ln9}+\frac{3}{8m}}\,.
\]
This is a contradiction with the assumption
$\delta_j\leq-\frac{\ln3}{m}$, because
\[
-\frac{\ln3}{m}<-\frac{\frac{1}{2m-\ln9}+\frac{3}{8m}}{1+\frac{1}{2m-\ln9}+\frac{3}{8m}}
\qquad\text{for all $m\geq2$}\,.
\]
\end{proof}

\begin{lemma}\label{Lemma gamma_j est}
The arguments of $\beta_j$ satisfy
\begin{equation}\label{gamma_j est}
\gamma_j\in\left(j\frac{2\pi}{m}-\frac{\pi}{6m},j\frac{2\pi}{m}+\frac{\pi}{6m}\right)
\end{equation}
for all $j=1,\ldots,m-1$.
\end{lemma}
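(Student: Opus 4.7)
My plan is to start from the identity $\beta_j^m(2-\beta_j)=1$ guaranteed by Observation~\ref{obs. beta} and extract information by taking the argument of both sides. Writing $\beta_j = B_j\e^{\I\gamma_j}$ and applying $\arg$ yields
\[
m\gamma_j + \arg\!\bigl(2 - B_j\e^{\I\gamma_j}\bigr) \equiv 0 \pmod{2\pi},
\]
so $\gamma_j = 2\pi n_j/m - \arg(2-\beta_j)/m$ for some integer $n_j \in \{0,1,\ldots,m-1\}$.

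The crucial geometric step is to bound $|\arg(2-\beta_j)|$. Since $B_j < 1$ strictly, the point $2-\beta_j$ lies in the open disk $\{w\in\C: |w-2|<B_j\}$, which is contained in the right half-plane and does not meet the origin. A ray from the origin can meet this disk only when its angle with the positive real axis does not exceed $\arcsin(B_j/2)$; because $B_j < 1$, this gives $|\arg(2-\beta_j)| < \arcsin(1/2) = \pi/6$, and therefore
\[
\Bigl|\gamma_j - \tfrac{2\pi n_j}{m}\Bigr| < \tfrac{\pi}{6m}.
\]

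It then remains to show $n_j = j$. The intervals $I_k = (2\pi k/m - \pi/(6m),\, 2\pi k/m + \pi/(6m))$ for $k=0,1,\ldots,m-1$ are pairwise disjoint on $\R/2\pi\Z$ because $2\pi/m > 2\cdot \pi/(6m)$, and each eigenvalue lands in some $I_{n_j}$; in particular the Perron root $\beta_0=\beta$ lies in $I_0$. To pin down $\{n_1,\ldots,n_{m-1}\} = \{1,\ldots,m-1\}$, I would use Vieta's formula $\sum_{j=0}^{m-1}\beta_j = 1$: combined with $\beta\approx 2$ (Lemma~\ref{lemma x0}) and $\beta_j \approx \e^{2\pi\I n_j/m}$ (Lemma~\ref{Lemma B_j est} plus the argument bound above), one obtains $\sum_{j=1}^{m-1}\e^{2\pi\I n_j/m} \approx -1$, and since $\sum_{k=0}^{m-1}\e^{2\pi\I k/m}=0$ this singles out $k=0$ as the unique missing residue. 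The ordering~\eqref{numbering} then forces $n_j=j$.

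The main obstacle is the last identification step, which actually requires two things: distinct roots of $p$ must lie in distinct arcs $I_k$, and the arc $I_0$ must contain only the Perron root. Both are intuitively clear from the picture but need a quantitative check. The Vieta-based counting above works provided the error term in $\beta_j \approx \e^{2\pi\I n_j/m}$ is controlled uniformly in $j$; an alternative I would try is to show that the function $\gamma \mapsto m\gamma + \arg\!\bigl(2-B(\gamma)\e^{\I\gamma}\bigr)$, with $B(\gamma)\in(0,1)$ determined implicitly by $B^{2m}|2-B\e^{\I\gamma}|^2=1$, is strictly increasing on $(0,2\pi)$, which would immediately give at most one root of $p$ in each $I_k$ and finish the counting.
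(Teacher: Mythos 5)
Your first half is fine and is essentially the paper's argument in disguise: taking arguments in $\beta_j^m(2-\beta_j)=1$ gives $m\gamma_j=-\arg(2-\beta_j)+2\pi n_j$, and your disk argument bounding $|\arg(2-\beta_j)|$ by $\arcsin(B_j/2)<\pi/6$ is a clean geometric version of the paper's estimate $\bigl|\tfrac{\sin\gamma_j}{2/B_j-\cos\gamma_j}\bigr|\leq\bigl|\tfrac{\sin\gamma_j}{2-\cos\gamma_j}\bigr|\leq\tfrac{1}{\sqrt3}$, which yields the same $\pi/(6m)$ window. The genuine gap is exactly the step you flag: proving $n_j=j$, i.e.\ that the integers $n_1,\ldots,n_{m-1}$ are pairwise distinct and avoid $0$. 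Your primary remedy, the Vieta count, does not work quantitatively. Each approximation $\beta_j\approx\e^{2\pi\I n_j/m}$ carries an error of order $1/m$ (from $1-B_j\lesssim\tfrac{\ln 9}{2m}$ and the $\pi/(6m)$ argument window), so the total error in $\sum_{j=1}^{m-1}\beta_j$ is $O(1)$; but replacing one residue class by an adjacent one changes $\sum_j\e^{2\pi\I n_j/m}$ by only $2\sin(\pi/m)=O(1/m)$. Hence $\sum_j\e^{2\pi\I n_j/m}\approx-1$ cannot distinguish the correct assignment from one in which some residue is hit twice and a neighbouring one is missed; the counting is swamped by the error terms.

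What actually closes the gap is your ``alternative I would try'', and it is precisely what the paper does: it shows that for fixed $B<1$ the phase function $L(\gamma)=m\gamma-\arctan\frac{\sin\gamma}{\frac2B-\cos\gamma}$ is strictly increasing, via
\[
L'(\gamma)=m-\frac{\frac{2}{B}\cos\gamma-1}{\left(\frac{2}{B}\right)^2-2\cdot\frac{2}{B}\cos\gamma+1}>m-\frac{1}{\frac{2}{B}-1}>m-1>0\,,
\]
together with $L(0)=0$ and $L(2\pi)=2m\pi$, so each level $2j\pi$, $j=1,\ldots,m-1$, is attained exactly once in $(0,2\pi)$; since equation~\eqref{beta} has exactly $m+1$ roots, of which $1$ and $\beta$ account for $j=0$, each of the $m-1$ remaining roots occupies its own window, and the ordering convention~\eqref{numbering} then forces $n_j=j$. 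As it stands, your proposal leaves this identification unproven (the monotonicity route is only named, not executed, and would still need the small care of handling the dependence of $B$ on $\gamma$, e.g.\ by arguing for each fixed $B<1$ as the paper does), so the proof is incomplete at its decisive step.
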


\begin{proof}
Equation~\eqref{beta} has $m+1$ solutions, namely $1$, $\beta$
and $\beta_1,\ldots,\beta_{m-1}$. Therefore, it suffices to show
that every sector
\[
\mathcal{S}_j:=\left\{B\e^{\I\gamma}\,\left|\,B>0\,,\,\gamma\in\left(j\frac{2\pi}{m}-\frac{\pi}{6m},j\frac{2\pi}{m}+\frac{\pi}{6m}\right)\right.\right\}
\qquad\text{for $j=1,\ldots,m-1$}
\]
contains exactly one solution of equation~\eqref{beta}.

Let
\[
\beta=B\e^{\I\gamma}
\]
be a solution of~\eqref{beta}, i.e.,
\[
B^m\e^{\I m\gamma}\left(2-B\e^{\I\gamma}\right)=1\,.
\]
Hence
\begin{equation}\label{2k.pi}
m\gamma=-\arg\left(2-B\e^{\I\gamma}\right)+2j\pi \qquad\text{for a
certain $j\in\Z$}\,.
\end{equation}
We can obviously assume $j\in\{0,1,\ldots,m-1\}$ without loss of generality. Since the solutions $1$ and $\beta$ of equation~\eqref{2k.pi} are obtained for $\gamma=0$, and, therefore, for $j=0$, 
we prove the statement in two steps: 1. We demonstrate that
equation~\eqref{arctan} has exactly one solution for every
$j=1,\ldots,m-1$. 2. We show that the solution corresponding to
$j$ belongs to the sector $S_j$ for every $j=1,\ldots,m-1$.

It holds
\[
2-B\e^{\I\gamma}=2-B\cos\gamma-\I B\sin\gamma\,,
\]
hence
\[
\tan(\arg(2-B\e^{\I\gamma}))=\frac{-B\sin\gamma}{2-B\cos\gamma}=\frac{-\sin\gamma}{\frac{2}{B}-\cos\gamma}\,.
\]
Furthermore, $B<1$ implies $2-B\cos\gamma>0$, hence
\begin{equation}\label{arg()}
\arg(2-B\e^{\I\gamma})\in(-\pi/2,\pi/2)\,,
\end{equation}
i.e., we can write
\[
\arg\left(2-B\e^{\I\gamma}\right)=\arctan\frac{-\sin\gamma}{\frac{2}{B}-\cos\gamma}\,.
\]
To sum up, equation~\eqref{2k.pi} is equivalent to
\begin{equation}\label{arctan}
m\gamma-\arctan\frac{\sin\gamma}{\frac{2}{B}-\cos\gamma}=2j\pi\,.
\end{equation}
For every $j=1,\ldots,m-1$, the left hand side
$L(\gamma)=m\gamma-\arctan\frac{\sin\gamma}{\frac{2}{B}-\cos\gamma}$
of equation~\eqref{arctan}, regarded as a function of $\gamma$
with a fixed $B<1$, is continuous and satisfies
\[
0=L(0) < 2j\pi < 2m\pi= L(2\pi)\,.
\]
Also, a simple calculation gives
\[
L'(\gamma)=m-\frac{\frac{2}{B}\cos\gamma-1}{\left(\frac{2}{B}\right)^2-2\cdot\frac{2}{B}\cos\gamma+1}
>m-\frac{1}{\frac{2}{B}-1}>m-1>0\,.
\]
Consequently, equation~\eqref{arctan} has indeed exactly one
solution for every $j=1,\ldots,m-1$. The solution satisfies
$m\gamma-2j\pi\in\left(-\pi/2,\pi/2\right)$. With regard to the
numbering~\eqref{numbering}, we conclude that
\[
\gamma_j\in\left(\frac{2j\pi}{m}-\frac{\pi}{2m},\frac{2j\pi}{m}+\frac{\pi}{2m}\right)\,.
\]
Now we improve this estimate in order to prove
$\gamma_j\in\mathcal{S}_j$. Since $2/B_j>2$ for all
$j=1,\ldots,m-1$, we have
\[
\left|\frac{-\sin\gamma_j}{\frac{2}{B_j}-\cos\gamma_j}\right|\leq\left|\frac{\sin\gamma_j}{2-\cos\gamma_j}\right|\,.
\]
It is easy to show that
\[
\left|\frac{\sin\gamma}{2-\cos\gamma}\right|\leq\frac{1}{\sqrt{3}}
\qquad\text{for all $\gamma\in\R$}\,,
\]
hence
\begin{equation}\label{pi/6}
\left|\arctan\frac{\sin\gamma_j}{\frac{2}{B_j}-\cos\gamma_j}\right|\leq\arctan\frac{1}{\sqrt{3}}=\frac{\pi}{6}\,.
\end{equation}
By substituting estimate~\eqref{pi/6} into
equation~\eqref{arctan}, we obtain statement~\eqref{gamma_j est}.
\end{proof}

\end{document}